\theoremstyle{plain}% Theorem-like structures provided by amsthm.sty
\def\rr{{\mathbb R}}
\def\rn{{\mathbb{R}^n}}
\newcommand{\ared}{{\rm ared}}
\newcommand{\pred}{{\rm pred}}
\DeclareMathOperator{\dom}{dom}
\DeclareMathOperator{\Prox}{Prox}
\DeclareMathOperator*{\argmin}{\arg\!\min}
\newtheorem{theorem}{Theorem}[section]
\newtheorem{lemma}[theorem]{Lemma}
\newtheorem{corollary}[theorem]{Corollary}
\newtheorem{proposition}[theorem]{Proposition}
\newtheorem{assumption}[theorem]{Assumption}
\theoremstyle{definition}
\newtheorem{definition}[theorem]{Definition}
\theoremstyle{remark}
\newtheorem{remark}{Remark}
\begin{document}

%\articletype{ARTICLE TEMPLATE}% Specify the article type or omit as appropriate

\title{A multilevel proximal trust-region method for nonsmooth optimization with applications}

\author{
\name{Robert Baraldi\textsuperscript{a}, Michael Hinterm\"uller\textsuperscript{b,c} and Qi Wang\textsuperscript{b}\thanks{CONTACT Q. Wang. Email: qi.wang@wias-berlin.de}}
\affil{\textsuperscript{a}Optimization and Uncertainty Quantification, Sandia National Laboratories, USA; \textsuperscript{b}Weierstrass Institute for Applied Analysis and Stochastics, Germany;
\textsuperscript{c}Humboldt-Universit\"at zu Berlin, Germany}
}

\maketitle

\begin{abstract}
    Many large-scale optimization problems arising in science and engineering are naturally defined at multiple levels of discretization or model fidelity. Multilevel methods exploit this hierarchy to accelerate convergence by combining coarse- and fine-level information, a strategy that has proven highly effective in the numerical solution of partial differential equations and related optimization problems.
    It turns out that many applications in PDE-constrained optimization and data science require minimizing the sum of smooth and nonsmooth functions. For example, training neural networks may require minimizing a mean squared error plus an $L^1$-regularization to induce sparsity in the weights. Correspondingly, we introduce a multilevel proximal trust-region method to minimize the sum of a nonconvex, smooth and a convex, nonsmooth function. Exploiting ideas from the multilevel literature allows us to reduce the cost of the step computation, which is a major bottleneck 
    in single level procedures. Our work unifies theory behind the proximal trust-region methods and multilevel recursive strategies.  We prove global convergence of our method in finite dimensional space and provide an efficient nonsmooth subproblem solver. We show the efficiency and robustness of our algorithm by means of numerical examples in PDE constrained optimization and machine-learning. 
\end{abstract}

\begin{keywords}
multilevel methods; nonsmooth optimization; global convergence; PDE-constrained optimization; physics-informed neural networks; scientific machine learning; trust-region methods
\end{keywords}

\section{Introduction}

We develop a multilevel trust-region method for the nonsmooth optimization problem
\begin{equation}\label{obj}
    \text{minimize } F(x)\coloneqq f(x)+\phi(x)\quad\text{over }{x\in\rn},
\end{equation}
where $f:\rn\to\rr$ is continuously differentiable and possibly nonconvex, and $\phi:\rn\to\rr\cup\{+\infty\}$ is proper, lower semi-continuous, convex, and not necessarily (classically) differentiable.   
Such nonsmooth optimization problems arise ubiquitously in scientific machine learning and computational optimization, 
where, for instance, smooth dynamics modeled by $f$ interact with nonsmooth regularization or constraints represented by $\phi$.
Typical examples include inverse problems with sparsity-promoting regularizers (or priors)~\cite{v}, partial differential equation (PDE) constrained optimization with control constraints~\cite{hpuu,ik}, and modern learning paradigms such as physics-informed neural networks (PINNs)~\cite{rpk,kgklpwy,wyp} and operator learning~\cite{ljpzk,kllabsa}. 
While gradient-based and quasi-Newton methods dominate smooth optimization, nonsmooth objectives 
-- such as those involving $\ell_1$-penalties, indicator functions, or composite loss functions -- 
require specialized algorithms capable of handling nondifferentiability and nonsmooth geometry. 
First-order proximal algorithms~\cite{pb,bt,r} and trust-region methods~\cite{cgst} are two leading frameworks for such problems. 

The proximal viewpoint enables a natural decomposition of smooth and nonsmooth components, while the trust-region framework leverages gradient and Hessian information for the smooth part and proximal operations for the nonsmooth part, ensuring global convergence and adaptive step control.

Baraldi and Kouri~\cite{bk} recently introduced an  inexact trust-region algorithm for minimizing \eqref{obj}. 
Their algorithm demonstrated global convergence without sacrificing worst-case complexity bounds, out-performed its contemporaries in various classes of data science and PDE-constrained optimization problems. In particular, we adapt their spectral Cauchy point solver~\cite[Algorithm 5]{bk2} and leave other subproblem solver extensions for future work. 
Related research has also explored proximal quasi-Newton methods~\cite{abo} and nonsmooth trust-region approaches for composite problems~\cite{cgt}, providing theoretical foundations and practical insights into solving large-scale nonsmooth problems efficiently.

Large-scale finite-dimensional optimization problems frequently result from the discretization of infinite-dimensional formulations, such as optimal control problems governed by ordinary or partial differential equations~\cite{pm,hpuu}. 
While standard methods (see, e.g., \cite{MR1972219} for classes of PDE-constrained problems) can solve these at a single discretization level (even in a mesh independent fashion \cite{MR2085262,MR2378147}), they don't leverage the availability of multiple resolution levels, where finer discretizations capture more detailed features of the solution and improve accuracy, at the expense of substantially higher computational cost due to larger system sizes and denser numerical operations. 

Multigrid and multilevel methods (see, e.g., \cite{tos,briggs2000multigrid,hackbusch1985multi}), in general, address this challenge by efficiently coupling coarse and fine discretizations: coarse levels accelerate convergence by damping low-frequency errors, while fine levels refine the solution, leading to algorithms that achieve high accuracy at a fraction of the computational effort when compared to a single-level method operating only on the finest scale. 

In a smooth setting, i.e., when $\phi$ is also (Fr\'echet) differentiable, but yet perhaps allowing for bound constraints, e.g., on controls, \cite{MR1809605,MR2196577,MR2505585} have developed multilevel minimization algorithms in PDE-constrained optimization; see also \cite{MR2822818} for a multigrid method for a class of mathematical programs with equilibrium constraints (MPECs) involving partial differential operators.  These methods were mostly inspired by Achi Brandt's seminal nonlinear multigrid scheme for boundary value problems \cite{MR431719}. Concerning nonsmooth objectives, in connection with total variation regularization in image processing, \cite{MR2805356} is one of the few nonlinear multigrid approaches available in the literature. While these methods prove effective in practice, a global convergence analysis as conducted here has remained elusive, even in the smooth setting.

%We note here that multigrid and multilevel methods~\cite{tos,briggs2000multigrid,hackbusch1985multi}, in general, address the aforementioned computational challenge by efficiently coupling coarse and fine discretizations: coarse levels accelerate convergence by damping low-frequency errors, while fine levels refine the solution, leading to algorithms that achieve high accuracy at a fraction of the computational effort. 

Motivated by these developments and targeting a broad class of model hierarchies, Gratton et al.~\cite{gst} introduced recursive multilevel trust-region methods for smooth problems, where they solve the unconstrained optimization problem with a twice-continuously differentiable objective function which is bounded below, and they exploit the hierarchical approximations to accelerate convergence.
As stated above, using different levels of discretization for an infinite-dimensional problem has be exploited earlier, as this hierarchy can be instrumental for accelerating computations and improve efficiency by transferring information between coarse and fine levels. In this context, the perhaps simplest approach is to use coarser models to compute approximate solutions which can then be extended to starting points for optimization problem on a fine model (see, for instance,~\cite{gt} for a partitioned quasi-Newton technique). In the language of multigrid methods for numerically solving PDEs, this approach is referred to as full approximation scheme. A versatile approach admitting a more general model hierarchy is given by the space mapping technique; see \cite{MR2178485} for an application in PDE-constrained optimization.
%Other efficient techniques are inspired by the multigrid paradigm in the solution of partial differential equations and associated systems of linear algebraic equations (see, for example,~\cite{tos}).

Gross and Krause~\cite{gk} extended the results of \cite{gst} by relaxing assumptions on the objective and the trust-region corrections, proving convergence under
more general conditions. They consider a finite-dimensional and nonconvex minimization problem, where the (at least) $C^1$ objective function is neither assumed to be quadratic nor convex.

These works demonstrate that coarse-grained models could efficiently reduce computational costs by providing informative approximations that accelerate fine-level optimization. 
Furthermore, these ideas have since inspired numerous applications, for instance, multilevel Levenberg–Marquardt algorithms for training a neural network with one-hidden layer to find a discretization-free approximate solution of a PDE~\cite{cgrv}.
However, extending multilevel strategies to nonsmooth settings still remains a very complex task requiring dedicated research efforts. 
The main challenges stem from the lack of gradient coherence across levels, the need for proximal
regularization to handle nondifferentiability, and ensuring consistent model reduction across the hierarchy.

Building on these foundational works, this paper proposes a \emph{multilevel proximal trust-region method} for nonsmooth optimization. 
In fact, we extend the framework of Gratton et al.~\cite{gst,gk} to the nonsmooth composite setting~\eqref{obj}, integrating proximal updates and model reduction principles to achieve robustness and scalability. 
In \cite{bk2}, Baraldi and Kouri expound on various proximal trust-region subproblem solvers that generalize traditional trust-region methods for smooth unconstrained and convex-constrained problems. They introduced many efficient proximal subsolvers, including a simplified spectral proximal gradient solver. We adapt their spectral proximal gradient subsolver (SPG) ~\cite[Algorithm 5]{bk2} in particular for this paper.
Our method leverages proximal subsolvers within a multilevel recursive structure, enabling efficient coarse-to-fine corrections even when the objective includes nonsmooth components.

The main contributions of this work are threefold:
\begin{enumerate}
    \item We formulate a recursive multilevel proximal trust-region algorithm for composite optimization problems, combining hierarchical modeling with proximal regularization.
    \item We provide a global convergence analysis under mild assumptions, extending existing multilevel and nonsmooth trust-region results.
    \item We demonstrate the effectiveness of our method for applications to physics-informed neural networks (PINNs) and PDE-constrained optimal control problems, where nonsmooth control costs and high-dimensional structures pose major computational challenges.
\end{enumerate}

This paper is structured as follows. 
In \Cref{sec:prelim}, we collect some mathematical preliminaries and recall the framework for nonsmooth trust-region methods. 
The recursive multilevel trust-region algorithm for our prototypical nonsmooth optimization problem is described in \Cref{sec:rmta}. 
In \Cref{sec:trsub} we provide an efficient subproblem solver, and we show in \Cref{sec:globalcon} that our overall algorithm is globally convergent. Finally, in \Cref{sec:numerics} we provide the numerical results for applications in PDE-constrained optimization and PINNs.

We make some conventions on notation. 
By $\|\cdot\|$ we denote the Euclidean norm in $\rr^n$, and $\langle\cdot,\cdot\rangle$ represents the inner product in $\rn$. 
Let $\mathcal{L}(\rn)$ be the space of continuous linear operators that map $\rn$ into itself, a Banach space endowed with the usual operator norm
$\|B\|=\sup\{\|Bx\|\,|\,\|x\|\leq 1\}, \,\forall B\in\mathcal{L}(\rn).$
Furthermore, for a positive definite matrix $M$ let
$\|x\|_{M}\coloneqq\sqrt{\langle x,Mx\rangle},$
with the related inner product 
$\langle x,y \rangle_M\coloneqq\langle x,My\rangle,$
and
$\|B\|_{M}=\sup\{\|Bx\|\,|\,\|x\|_M\leq 1\}$ the associated operator norm.

\section{Preliminaries}
\label{sec:prelim}
We briefly collect essentials from convex analysis and trust-region methods.
\subsection{Convex analysis}
Following standard convex analysis \cite{MR274683}, we denote the subdifferential of a proper and convex function $\phi:\rn\to(-\infty,\infty]$ by
$$\partial \phi(x)\coloneqq\{\xi\in\rn:\phi(y)\geq \phi(x)+\langle \xi,y-x \rangle, \quad\forall y\in\rn\},$$
and the effective domain of $\phi$ and $\partial\phi$ by
$$\dom \phi\coloneqq\{x\in\rn:\phi(x)<\infty\} \text{ and } \dom\partial\phi\coloneqq\{x\in\rn:\partial\phi(x)\neq\emptyset\},$$
respectively. Furthermore, for $t>0$ the Moreau envelope and proximal mapping of $\phi$, respectively, are
$$\phi_t(y)\coloneqq\min_{x\in\rn}\left\{\phi(x)+\frac{1}{2t}\|x-y\|^2\right\}\; \text{and}\; 
\Prox_{t\phi}(y)\coloneqq\argmin_{x\in\rn}\left\{\phi(x)+\frac{1}{2t}\|x-y\|^2\right\}.$$ 

For more details on the Moreau envelope and the proximal mapping, we refer the reader, e.g., to~\cite{bc,B}.

\subsection{Model definition and step acceptance}
As stated in the introduction, our algorithmic framework for solving \eqref{obj} builds on \cite{bk} and, hence, our analysis utilizes the correspnding assumptions, which we collect here for the sake of self-containedness.
\begin{assumption} \label{passump}(Conditions on Problem \eqref{obj})
\begin{itemize}
    \item [1.] The function $\phi:\rn\to(-\infty,+\infty]$ is proper, closed and convex.
    \item [2.] The function $f:\rn\to\rr$ is $L$-smooth on $\dom\phi$, i.e., there exists an open set $U\subset\rn$, containing $\dom\phi$, on which $f$ is Fr\'echet differentiable and its gradient $\nabla f$ is Lipschitz continuous with modulus $L>0$.
    \item [3.] The objective function $F\coloneqq f+\phi$ is bounded below on $\dom\phi$ by $\kappa_{lb}\in\rr$.
    \item [4.] The Hessian of $f$ at $x\in\mathbb{R}^n$, i.e., $\nabla^2f(x)$, and its approximation $B(x)$ are uniformly bounded from above by the constant $\kappa_H\geq 1$:
\begin{equation}\label{boundhess}
  1+\max\{\|\nabla^2 f(x)\|,\|B(x)\|\}\leq \kappa_H,  
\end{equation}
%and 
%\begin{equation}\label{boundhess}
%    1+\|B\|\leq\kappa_H,
%\end{equation}
for all $x\in\rr^{n}$.
\end{itemize}
\end{assumption}
% We first complete our assumptions by supposing that the 
% where $B$ is the approximation of $\nabla^2 f(x)$. 
The requirement \eqref{boundhess} is commonly invoked. Indeed, compare, for instance, Gratton et. al.~\cite{cgst}, where it is employed for recursive multiscale trust-region methods for minimizing a smooth objective, or \cite{bk,cgt} for complexity analysis for other nonsmooth methods.

%We extend the trust-region framework developed in~\cite{gst,bk} to solve \eqref{obj}; 
%for a complete reference on trust-region methods, see~\cite{cgt}. 
Within a standard trust-region algorithm, one computes a trial iterate $x_{k+1}$ that approximately solves the trust-region subproblem
\begin{equation}\label{subproblem}
\begin{split}
&\text{minimize }m_k(x)\coloneqq f_k(x)+\phi(x)\quad\text{over }{x\in\rn}\\
&\text{subject to } \|x-x_k\|\leq\Delta_k,
\end{split}
\end{equation}
where $x_k\in\dom\phi$ is the current iterate, $f_k$ is a smooth local model of $f$ around $x_k$, and $\Delta_k>0$ is the trust-region radius. 
A typical choice of $f_k$ in \eqref{subproblem} is the quadratic
\begin{equation}\label{quadratic}
    f_k(x)\coloneqq\frac{1}{2}\langle B_k(x-x_k),x-x_k\rangle+\langle \nabla f(x_k),x-x_k\rangle+f(x_k),
\end{equation}
where $B_k:=B(x_k)\in\mathcal{L}(\rn)$ is self adjoint (i.e., a symmetric matrix) approximating the curvature of $f$ at $x_k$. Typical choices for $B_k$ include the Hessian $\nabla^2 f(x_k)$ or a secant approximation thereof.
In general, we require that the trial iterate $x_{k+1}$ satisfies the trust-region constraint
\begin{subequations}\label{eq:Cauchy}
\begin{equation}\label{tr_constraint}
    \|x_{k+1}-x_{k}\|\leq\kappa_{\rm rad}\Delta_k,
\end{equation}
and the fraction of Cauchy decrease (FCD) condition
\begin{equation}\label{FCD}
    m_k(x_k)-m_k(x_{k+1})\geq\kappa_{\rm fcd}h_k\min\left\{\frac{h_k}{1+\|B_k\|},\Delta_k\right\},
\end{equation}
\end{subequations}
where $\kappa_{\rm rad},\,\kappa_{\rm fcd }>0$ are independent of $k$, and, for $t_0>0$, the stationarity measure is
\begin{equation}\label{hk}
    h_k\coloneqq\frac{1}{t_0}\left\|x_k-\Prox_{t_0\phi}(x_k-t_0\nabla f_k(x_k))\right\|.
\end{equation}

In practice, one selects $\kappa_{\rm rad}=1$.
Note that \eqref{FCD} ensures that $x_{k+1}\in\dom\phi$ as otherwise the left-hand side would be $-\infty$.
Given a trial iterate $x_{k+1}$ that satisfies \eqref{eq:Cauchy}, the traditional trust-region algorithm decides whether or not to accept $x_{k+1}$ based on a ratio of actual and predicted reduction
\begin{equation}\label{rhok}
    \rho_k \coloneqq \frac{\ared_k}{\pred_k}=\frac{F(x_k)-F(x_{k+1})}{m_k(x_k)-m_k(x_{k+1})}.
\end{equation}
If the model is a sufficiently accurate approximation to the objective function, $\rho_k$ will be close to one. Then the step is accepted if $\rho_k$ is larger than a chosen threshold $\eta_1\in(0,1)$ and is rejected otherwise. 
In the first case, the step is called \emph{successful}, and otherwise, the step is \emph{unsuccessful}.
Upon step acceptance, the trust region radius is updated for the next iteration, still based on \eqref{rhok}. Indeed,
if $\rho_k\geq\eta_2$ (this step is called \emph{very successful}), the trust-region radius is increased, and otherwise reduced if $\rho_k<\eta_1$. The algorithmic parameters $0<\eta_1<\eta_2<1$ are user-specified with common values $\eta_1=10^{-4}$ and $\eta_2=0.75$~\cite{bk}. Algorithm \ref{Alg1} summarizes this update scheme.
\begin{algorithm}[H]
\caption{Nonsmooth Trust-Region Algorithm~\cite{bk}\label{Alg1}}
\begin{algorithmic}
\State \textbf{Require:} Initial guess $x_{0}\in\dom\phi$, initial radius $\Delta_{0}>0$, $0<\eta_1<\eta_2<1$, and $0<\gamma_1\leq\gamma_2<1\leq\gamma_3$.
\State \textbf{for} $k=0,1,2,\dots$ \textbf{do}
\State \quad\textbf{Model Selection:} Choose model $m_k$ as in \eqref{subproblem}  and \eqref{quadratic}
\State \quad\textbf{Step Computation:} Compute $x_{k+1}\in\rn$ that satisfies \eqref{eq:Cauchy}
\State \quad\textbf{Step Acceptance and Radius Update:} Compute $\rho_k$ as in \eqref{rhok}
\State \quad\textbf{if} $\rho_{k} < \eta_1$ \textbf{ then} 
\State \quad\quad$x_{x+1}\gets x_{k}$
\State \quad\quad$\Delta_{k+1}\in[\gamma_1\Delta_k,\gamma_2\Delta_k]$ 
\State \quad\textbf{else if} $\rho_k\in[\eta_1,\eta_2)$ \textbf{ then}
\State \quad\quad$\Delta_{k+1}\in[\gamma_2\Delta_k,\Delta_k]$
\State \quad\textbf{else}
\State  \quad\quad$\Delta_{k+1}\in[\Delta_k,\gamma_3\Delta_k]$
\State \quad\textbf{end if}
\State\textbf{end for}
\end{algorithmic}
\end{algorithm}

\section{Recursive multilevel trust-region algorithms}\label{sec:rmta}

A recursive trust-region algorithm extends the classical trust-region framework by constructing and solving a hierarchy of related subproblems at multiple levels of resolution \cite{gst}.
Each level defines an approximate model of the objective, allowing coarse levels to provide computationally inexpensive guidance for fine-level optimization.
Conceptually, the recursive trust-region framework can be interpreted as dynamically selecting which subproblem model to solve at each iteration, thereby balancing model fidelity and computational cost while preserving global convergence guarantees.
Let $r\in\mathbb{N}$ and $n_i\in\mathbb{N}$ for $i\in\{0,\,\dots,\,r\}$, 
and assume that there is a collection of functions $\{F_i\coloneqq f_i+\phi_i\}_{i=0}^{r}$ 
such that each $F_i$ satisfies Assumption \ref{passump} from $\rr^{n_i}$ to $\rr$ with $n_{i}\geq n_{i-1}$. 
The top-level objective corresponds to the 
original problem \eqref{obj}, with $n_r=n$, $F_r(x)=F(x)$,  $f_r(x)=f(x)$ and $\phi_r(x)=\phi(x)$ for all $x\in\rn$. 
The underlying presumption is that $F_i$ is ``more costly'' to minimize than $F_{i-1}$; in our examples, $F_i$ has more variables than $F_{i-1}$. 
 
We aim to construct a level-dependent objective function $L_{i-1}$, based on $F_{i-1}$,
such that its minimization provides a good search direction on level $i$. 
In a two-level strategy, we use $F_{r-1}$ to construct an alternative model 
$L_{r-1}$ for $F_r=F$ in the neighborhood of the current iterate that is cheaper than 
\eqref{subproblem} at level $r$, and to use this alternative model, whenever suitable, 
to define the step in the trust-region algorithm. 
For more than two levels ($r>1$), 
this can be done recursively with the approximation process stopping at level 0,  
where the trust-region subproblem model \eqref{subproblem} is always used.

For $F_{i-1}$ to be useful at all in minimizing $F_i$, there should be some relation between the variables of these two functions. 
We henceforth assume the following.
\begin{assumption}\label{roworthonormal}
    For each level $i=1,\,\dots,\,r$, there exist two full-rank linear operators $R_i^{i-1}:\,\rr^{n_i}\to \rr^{n_{i-1}}$ (the restriction) and $P_{i-1}^{i}:\,\rr^{n_{i-1}}\to \rr^{n_{i}}$ (the prolongation) such that $P_{i-1}^{i}=\sigma_i(R_{i}^{i-1})^\top$, for some fixed constant $\sigma_i>0$. 
    Moreover, we assume that $R_i^{i-1}$ is row-orthonormal, that is,  $R_i^{i-1}(R_i^{i-1})^\top=I_{n_{i-1}}$, where $I_{n_{i-1}}$ is the identity matrix of size $n_{i-1}\times n_{i-1}$. 
\end{assumption}
We assume $\sigma_i=1$ without loss of generality, as this can be directly obtained from the original form by scaling $P_i$.

Throughout the remainder of the paper, our quantities of interest may carry a double subscript $i,\,k$, where the first index, $\{i\, \vert \, i \in \mathbb{N}, 0\leq i\leq r\}$, is the level index, 
and the second one, $k$, refers to the current iteration within level $i$. It is assumed to be reset to $0$ each time level $i$ is entered.
Consider now some iteration $k$ at level $i$ with current iterate $x_{i,k}$. Then,
we first restrict $x_{i,k}$ to define the initial iterate $x_{i-1,0}$ at level $i-1$, that is, $x_{i-1,0}:=R_{i}^{i-1}x_{i,k}$.
Next, suppose that, based on $F_{i-1}$, one decides to use a low-level model $L_{i-1}$ to compute an update step $s_{i-1}\in\mathbb{R}^{n_{i-1}}$.
In order to prove convergence of our subsequent multilevel scheme, 
the first-order behavior of the models $L_{i}$ and $L_{i-1}$ must be coherent in a neighborhood of $x_{i,k}$ and $x_{i-1,0}$, respectively. This leads to the requirement 
\begin{equation}\label{firstoderconsist}
    \partial L_{i-1}(x_{i-1,0})=R_i^{i-1}\nabla  \tilde{L}_i(x_{i,k})+R_i^{i-1}\partial\phi_i(x_{i,k})=R_i^{i-1}\partial L_{i}(x_{i,k}),
\end{equation}
where $\tilde{L}_i$ denotes the smooth part of $L_i$. Here, we use~\cite[Theorem 3.36]{B}, and Assumption \ref{passump}, Assumption \ref{roworthonormal}, as well as~\cite[Theorem 3.43]{B}.
Note that this is a generalization of the smooth setting of \cite{gst}. 
Clearly, this criterion induces a choice in both the smooth (in which we follow the setting of \cite{gst}) and nonsmooth terms of $L_{i-1}$.
With this in mind and given the respective entities on level $i$, we define the low-level model as
\begin{equation}\label{mlow}
    L_{i-1}(x_{i-1,0}+s_{i-1})\coloneqq F_{i-1}(x_{i-1,0}+s_{i-1})+\langle R_i^{i-1}\nabla \tilde{L}_{i}(x_{i,k})-\nabla f_{i-1}(x_{i-1,0}),s_{i-1}\rangle,
\end{equation}
where %$L_i^{\mathrm{smooth}}$ is the smooth term of $L_i$ and
$$F_{i-1}(x_{i-1,0}+s_{i-1})\coloneqq f_{i-1}(x_{i-1,0}+s_{i-1})+\phi_{i-1}(x_{i-1,0}+s_{i-1}).$$ 
By convention we have for $s_r\in\mathbb{R}^{n_r}$ that
$L_r(x_{r,0}+s_r)\coloneqq F_{r}(x_{r,0}+s_r)
=f_r(x_{r,0}+s_r)+\phi_r(x_{r,0}+s_r)$.
We still need to specify a choice of the nonsmooth part on lower levels. This is done next. 
\begin{definition}\label{lowhigh}
 At level $i\in\{1,\,\dots,\,r\}$, given a function $\phi_i:\rr^{n_{i}}\to\rr$ its associated low-level model, denoted by $\phi_{i-1}$, is defined by $\phi_{i-1}(x_{i-1,k})\coloneqq \phi_i(x_{i,k}+(R_i^{i-1})^\top(x_{i-1,k}-x_{i-1,0}))$ with $x_{i-1,0}=R_i^{i-1}x_{i,k}$ and $R_i^{i-1}$ as in Assumption \ref{roworthonormal}.     
\end{definition}
With this definition, Assumption \ref{roworthonormal}, and \cite[Theorem 6.15]{B}, the following relation between the proximal mappings of $\phi_i$ and $\phi_{i-1}$ holds true.
\begin{remark}
    For every level $i\in\{1,\,\dots,\,r\}$, $x\in\rr^{n_i}$, and $\phi_{i}$ as in Definition \ref{lowhigh}, we have for $t>0$:
    \begin{equation}\label{proxlowhign}
        \Prox_{t\phi_{i-1}}(R_{i}^{i-1}x) = R_{i}^{i-1}\Prox_{t\phi_{i}}(x).
    \end{equation}
\end{remark}
    
Concerning the smooth part on lower levels we proceed as follows.
\begin{definition}\label{lfh}
At level $i\in\{1,\,\dots,\,r\}$, given a function $f_{i}:\, \mathbb{R}^{n_i}\to\mathbb{R}$ its associated low-level model, denoted by $f_{i-1}:\mathbb{R}^{n_{i-1}}\to\mathbb{R}$, is defined by $f_{i-1}(y) = f_{i}(T_{i-1}^i(y))$, where $y\coloneqq(y_1,\,y_2,\,\dots,\,y_{n_{i-1}})^\top\in\mathbb{R}^{n_{i-1}}$, and $T_{i-1}^i:\, \mathbb{R}^{n_{i-1}}\to \mathbb{R}^{n_i}$ is the extension-by-zero operator, i.e., $T_{i-1}^i(y) = (y_1,\,y_2,\,\dots, y_{n_{i-1}},0,\,\cdots,\,0)^\top\in \mathbb{R}^{n_i}$.
\end{definition}

First-order modifications like the one in \eqref{mlow} (i.e., the second term of the sum) are typical in multigrid applications 
in the context of the full approximation scheme (FAS) 
(see, for instance,~\cite{F,N}) and smooth multilevel optimization strategies~\cite{gst}.
While other choices of $\phi_{i-1}$ may satisfy \eqref{firstoderconsist} as well, our experiments indicate that this projection up to the fine level works very well.

We continue by describing our multilevel trust-region scheme. In fact, the main task when entering level $i=0,\,\dots,\,r$ is to minimize $L_{i}$ starting from $x_{i,0}$. 
At iteration $k$ of this minimization process and depending on progress towards stationarity, we select either the model $L_{i-1}(x_{i-1,0}+s_{i-1})$ (given by \eqref{mlow}) 
and enter a coarser trust-region level, or the  ``standard'' (Taylor) model
\begin{equation}\label{taylor}
 m_{i,k}(x_{i,k}+s_i)\coloneqq L_{i}(x_{i,k}+s_i)+\langle\nabla\tilde{L}_i(x_{i,k}),s_i\rangle+\frac{1}{2}\langle B_{i,k}s_i,s_i\rangle,  
\end{equation} 
where $B_{i,k}=B_{i,k}(x_{i,k})$ is the approximation of $\nabla^2f_{i}(x_{i,k})$ and  $s_i\in\mathbb{R}^{n_i}$.
Once the model is chosen, 
we then compute a step $s_{i,k}\in\mathbb{R}^{n_i}$ that generates a decrease on this model within a trust region $\{s_{i}\,|\,\|s_{i}\|_i\leq\Delta_{i,k}\}$ for some trust region radius $\Delta_{i,k}>0.$ 
If the model \eqref{taylor} is chosen, then we use the spectral proximal gradient subproblem solver (SPG),  which we
describe later as \Cref{alg:Subalg} in \Cref{sec:trsub}, to compute $s_{i,k}$. 

The decrease of the model $m_{i,k}$ must satisfy the usual FCD \eqref{FCD}, i.e.
\begin{equation}\label{taylorfcd}
    m_{i,k}(x_{i,k})-m_{i,k}(x_{i,k}+s_{i,k})\geq \kappa_{\rm fcd }h_{i,k}\min\left\{\frac{h_{i,k}}{1+\|B_{i,k}\|_i},\Delta_{i,k}\right\},
\end{equation}
where $B_{i,k}$ is an approximation of $\nabla^2 f_{i}(x_{i,k})$ and
$$h_{i,k}\coloneqq \frac{1}{t}\left\|x_{i,k}-\Prox_{t\phi_i}(x_{i,k}-t\nabla \tilde{L}_i(x_{i,k}))\right\|_i,$$
for fixed $t>0$, as well as 
\begin{equation}\label{steptr}
\|s_{i,k}\|_i\leq \Delta_{i,k}.   
\end{equation}
We note that the norm $\|\cdot\|_i$ in the last expression is level-dependent and defined by
\begin{equation}\label{inorm}
    \|s_i\|_i\coloneqq \sqrt{\langle s_i,\,M_is_i\rangle}\eqqcolon \|s_i\|_{M_i},
\end{equation}
where 
\begin{equation}\label{Mi}
M_i=
    \begin{cases}
        (P_{r-1}^{r}\cdots P_{i+1}^{i+2}P_{i}^{i+1})^\top(P_{r-1}^{r}\cdots P_{i+1}^{i+2}P_{i}^{i+1}), & \text{if } i=0,\,\dots,\,r-1,\\
        I_n, & \text{if } i=r.
    \end{cases}   
\end{equation}
\begin{remark}\label{reinorm}
    Under Assumption \ref{roworthonormal} and for
    $\sigma_i=1$, we have for any $i\in\{1,\,\dots,\,r\}$ that $\|s_i\|_i=\|s_i\|$, 
    where $\|\cdot\|$ is the Euclidean norm in $\rr^{n_i}$. 
    But, in general, $$\|s_i\|_i=\begin{cases}
        \sigma_r\cdots\sigma_{i+1}\|s_i\|, & \text{if } i=0,\,\dots,\,r-1,\\
        \|s_i\|, & \text{if } i=r.
    \end{cases}   $$
\end{remark}
When the coarse model $L_{i-1}$ is chosen for computing an update step, one (approximately) solves
\begin{equation}
\label{eq:lower_level}
\begin{split}
    &\text{minimize } L_{i-1}(x_{i-1,0}+s_{i-1})\quad\text{over }s_{i-1}\in\mathbb{R}^{n_{i-1}},\\
    &\text{subject to }\|s_{i-1}\|_{i-1}\leq\Delta_{i,k}.
\end{split}
\end{equation}
Suppose that this produces a new point $x_{i-1,*}$ such that $L_{i-1}(x_{i-1,*})<L_{i-1}(x_{i-1,0})$ and a corresponding step $s_{i-1}=x_{i-1,*}-x_{i-1,0}$ which must then be brought back to level $i$ by the prolongation $P_{i-1}^{i}$ satisfying both \eqref{inorm} and \eqref{Mi}. Then,
for the trust-region constraint at level $i-1$ we observe that 
\begin{equation}\label{irelation}
    \|s_i\|_i=\|s_i\|_{M_i}=\|P_{i-1}^{i}s_{i-1}\|_{M_i}=\|s_{i-1}\|_{(P_{i-1}^{i})^\top M_iP_{i-1}^{i}}=\|s_{i-1}\|_{M_{i-1}}=\|s_{i-1}\|_{i-1},
\end{equation}
which
%, given an update $x_{i-1,*}=x_{i-1,0}+s_{i-1}$, 
implies 
%$\|x_{i-1,*}-x_{i-1,0}\|_{i-1}=\|s_{i-1}\|_{i-1}=\|s_i\|_i\leq\Delta_{i,k}$, i.e.
\begin{equation}\label{lowradius}
    \|x_{i-1,*}-x_{i-1,0}\|_{i-1}=\|s_{i-1}\|_{i-1}=\|s_i\|_i\leq\Delta_{i,k}.
\end{equation}
%On the other hand, if model $L_{i-1}$ is chosen, we minimize this model but entering a coarser
%trust-region algorithm. 
%Minimization of this latter model produces a new point $x_{i-1,*}$ such that $L_{i-1}(x_{i-1,*})<L_{i-1}(x_{i-1,0})$ and a corresponding step $s_{i-1}=x_{i-1,*}-x_{i-1,0}$ which must then be brought back to level $i$ by the prolongation $P_{i-1}^{i}$ satisfying both \eqref{inorm} and \eqref{Mi}. 
%The lower level subproblem, instead of \eqref{taylor}, is now
%\begin{equation}
%\label{eq:lower_level}
%    \min_{\|s_{i-1}\|_{i-1}\leq\Delta_{i,k}} L_{i-1}(x_{i-1,0}+s_{i-1}).
%\end{equation}
Note that it is not always possible to use the lower level model. For example, it may happen that
$x_{i,k}$ is not a local minimizer of $L_i$ but $R_i^{i-1}x_{i,k}$ is a local minimizer of $L_{i-1}$.
Consequently, as in \cite{gst}, we select $L_{i-1}$ or $m_{i,k}$ based on initial lower-level stationarity criteria.
In particular, the model $L_{i-1}$ is potentially useful only if 
\begin{align*}
    h_{i-1,0}&=t^{-1}\|x_{i-1,0}-\Prox_{t\phi_{i-1}}(x_{i-1,0}-tR_{i}^{i-1}\nabla \tilde{L}_i(x_{i,k}))\|_{i-1}\\
    &= t^{-1}\|R_{i}^{i-1}(x_{i,k}-\Prox_{t\phi_{i}}(x_{i,k}-t\nabla \tilde{L}_i(x_{i,k})))\|_{i-1}
\end{align*}
is large enough compared to $h_{i,k}$. Here, $t>0$ is fixed as before. This relationship follows from the proof of~\cite[Lemma 5]{bk}, \Cref{lowhigh}, \Cref{roworthonormal}, and~\cite[Theorem 6.15]{B}.
We therefore restrict the use of the model $L_{i-1}$ to iterations where
\begin{equation}\label{fail}
    h_{i-1,0}\geq \kappa_{\rm stop} h_{i,k} \quad\text{and}\quad h_{i-1,0}\geq \epsilon_{i-1},
\end{equation}
i.e., 
\begin{equation*}
\begin{aligned}
  &\bigl\|
    R_{i}^{\,i-1}\!\bigl(
        x_{i,k}
        - \Prox_{t\phi_i}\!\left(
            x_{i,k} - t\nabla \tilde{L}_i(x_{i,k})
        \right)
    \bigr)
  \bigr\|_{i-1}\\
  &\quad\;\ge\;
  \max\left\{\kappa_{\mathrm{stop}}\,
  \bigl\|
      x_{i,k}
      - \Prox_{t\phi_i}\!\left(
          x_{i,k} - t\nabla \tilde{L}_i(x_{i,k})
      \right)
  \bigr\|_{i} , t\,\epsilon_{i-1}\right\},
\end{aligned}
\end{equation*}
%
%and
%\begin{equation*}
%    \frac1t\|R_{i}^{i-1}(x_{i,k}-\Prox_{t\phi_i}(x_{i,k}-t\nabla L_i^{\mathrm{smooth}}(x_{i,k})))\|_{i-1}\geq \epsilon_{i-1}
%\end{equation*}
for some constant $\kappa_{\rm stop}\in (0,\min\{1,\min_{i}\|R_{i}^{i-1}\|\})$ 
and where $\epsilon_{i-1}\in (0,1)$ is a user-given tolerance for the first-order criticality measure for $L_{i-1}$. % that is judged sufficient at level $i-1$. 
Note that, given $\nabla \tilde{L}_i(x_{i,k})$ and $R_i^{i-1}$, this condition is easy to check before even attempting to compute a step at level $i-1$.

\begin{remark}
We note that the criteria in \eqref{fail} are motivated by~\cite{gst}, but here modified via Assumption \ref{roworthonormal} with the constant $\kappa_{\rm stop}\in(0,1)$ and $R_i^{i-1}$, a row-orthonormal matrix.
\end{remark}

\begin{algorithm}
\caption{RMNTR($i$, $L_i$, $x_{i,0}$, $\phi_{i}$, $\Delta_{i+1}$, $\epsilon_i^h$, $\epsilon_i^{\Delta}$, $\Delta_i^s$)}
\label{RMNTR}
\begin{algorithmic}[1]
\State \textbf{Step 0: Require.} Initial guess $x_{i,0}\in\dom\phi_i$, initial radius $\Delta_{i,0}=\min\{\Delta_i^s,\Delta_{i+1}\}$, $0<\eta_1<\eta_2<1$, and $0<\gamma_1\leq\gamma_2<1\leq\gamma_3$ and $k=0$
\State \textbf{Step 1: Model choice.} If $i = 0$ or if \eqref{fail}  fails, go to Step 3; otherwise continue with the (recursive) Step 2. %or to Step 3 (Taylor step).
\State \textbf{Step 2: Recursive step computation.} 
Set $x_{i-1,0}=R_{i}^{i-1}x_{i,k}$ and let $L_{i-1}$ as in \eqref{mlow}. Choose $\epsilon_{i-1}$ and 
call Algorithm RMNTR($i - 1$, $L_{i-1}$, $x_{i-1,0}$, $\phi_{i-1}$, $\Delta_{i,k}$, $\epsilon_{i-1}^h$, $\epsilon_{i-1}^{\Delta}$, $\Delta_{i-1}^s$),
yielding an approximate solution $x_{i-1,*}$ of $L_{i-1}$ . Then define $s_{i,k} =  P_{i-1}^{i}(x_{i-1,*} - x_{i-1,0})$ and  
\[
m_{i,k}(x_{i,k}+s_{i}) = L_{i-1}(x_{i-1,0}+s_{i-1})
\]  for all $s_{i}=P_{i-1}^{i}s_{i-1}$. Go to Step 4.
\State \textbf{Step 3: Taylor step computation.} Find a step $s_{i,k}$ such that \eqref{taylorfcd} and \eqref{steptr} hold for $m_{i,k}(x_{i,k}+s_{i})$ defined in \eqref{taylor}. Go to Step 4.
\State \textbf{Step 4: Acceptance of the trial point.} Compute 
\[
\rho_{i,k} = \frac{L_i(x_{i,k}) - L_i(x_{i,k}+s_{i,k})}{m_{i,k}(x_{i,k})-m_{i,k}(x_{i,k}+s_{i,k})}.
\]
If $\rho_{i,k} \geq \eta_1$, then set $x_{i,k+1} = x_{i,k}+s_{i,k}$; otherwise let $x_{i,k+1} = x_{i,k}$.
\State \textbf{Step 5: Termination.} Compute $h_{i,k}$. If $h_{i,k} \leq \epsilon_i^h$ or $\|x_{i,k+1} - x_{i,0}\|_i > (1 - \epsilon_i^{\Delta})\Delta_{i+1}$, then return with the approximate solution $x_{i,*} = x_{i,k+1}$.

\State \textbf{Step 6: Trust-region radius update.} Set
\[
\Delta_{i,k}^+ \in \begin{cases}
[\Delta_{i,k}, \gamma_3\Delta_{i,k}] & \text{if } \rho_{i,k} \geq \eta_2, \\
[\gamma_2 \Delta_{i,k}, \Delta_{i,k}] & \text{if } \rho_{i,k} \in [\eta_1, \eta_2), \\
[\gamma_1 \Delta_{i,k}, \gamma_2 \Delta_{i,k}] & \text{if } \rho_{i,k} < \eta_1,
\end{cases}
\]
and
\begin{equation}\label{deres}
 \Delta_{i,k+1} = \min \left\{\Delta_{i,k}^+, \Delta_{i+1} - \|x_{i,k+1} - x_{i,0}\|_i\right\}.   
\end{equation}
Set $k:=k+1$ and return to Step 1.
\end{algorithmic}
\end{algorithm}

\Cref{RMNTR} describes the recursive, multilevel, nonsmooth trust-region (RMNTR) algorithm. 
It is assumed that the prolongations $P_{i-1}^{i}$ and restrictions $R_{i}^{i-1}$ are known and satisfy Assumption \ref{roworthonormal}. 
An initial trust-region radius for each level $\Delta_i^s>0$ is also defined, as well as level-dependent proximal gradient norm tolerances $\epsilon_i^{h}\in(0,1)$ and the trust-region tolerances $\epsilon^{\Delta}_i\in(0,1)$ for $i=0,\,\dots,\,r$. 
The initial data of this algorithm consist of the level index $i$ ($0\leq i\leq r$), a starting point $x_{i,0}$, the nonsmooth part $\phi_i$ of the objective function in this level, the radius $\Delta_{i+1}$ of the level-$(i+1)$ trust region, and the tolerances $\epsilon_i^h$ and $\epsilon_i^{\Delta}$. 
The original task of minimizing $F(x)=F_r(x)=f_r(x)+\phi_r(x)=f(x)+\phi(x)$ is achieved by calling RMNTR($r,\,F_r,\,x_{r,0},\,\phi_r,\,\Delta_{r+1},\,\epsilon_i^h,\,\epsilon_{i}^{\Delta},\,\Delta_{r}^{s})$ for some starting point $x_{r,0}$ and initial trust-region radius $\Delta_{r}^s$, and where we define $\Delta_{r+1,0}=\infty$. 
We also point out that the definition of $m_{i,k}$ in Step 2 is only used for the calculation of $\rho_{i,k}$, which means actually, we solve the subproblem in the lower level models in Step 2. 
The motivation for \eqref{deres} in Step 6 of the algorithm and the termination test $\|x_{i,k+1}-x_{i,0}\|>(1-\epsilon_i^{\Delta})\Delta_{i+1}$ in Step 5 are to guarantee that iterates at a lower level in a recursion remain in the trust region defined at the calling level (see~\cite[Lemma 4.1]{gst} for details).

\section{Trust-region Subproblem Solvers}\label{sec:trsub}
The methods we propose are recursive procedures, so it suffices to concentrate on the two-level case. 
Thus, for the sake of simplicity, from now on we assume that we have just two approximations to our objective function $F$ at our disposal, i.e., $r=1$. 
To unburden our notation, below we write $R$ for $R_i^{i-1}$ and $P$ for $P_{i-1}^{i}$, respectively. This slight abuse of notation is adopted solely for notational convenience and does not affect the generality of the discussion. The complete formulation and proof of the multilevel (i.e., $r>1$) case are provided in Appendix \ref{appendix}.

In iteration $k$, we (approximately) minimize either the Taylor model \eqref{taylor} or the lower level model \eqref{mlow}; compare steps 2 and 3 in Algorithm RNMTR above. 
If we deal with the Taylor model, then all pertinent FCD and feasibility results can be found in ~\cite[Section 5, Alg. 3]{bk2}. 
Hence, here we only prove that the lower-level model selection satisfies FCD~\eqref{taylorfcd} and trust-region feasibility.

Motivated by ~\cite{bk2}, we now show that it is possible to attain Cauchy decrease via a spectral proximal method for minimizing $L_{i-1}$.

\subsection{Spectral Cauchy Points}
The ``spectral'' Cauchy point in iteration $0$ on level $i-1$ of \Cref{RMNTR} is  
\begin{equation}
    x_{i-1,0}^{c}\coloneqq x_{i-1,0}+\alpha_{i-1}(p_{i-1}(t_{i-1})-x_{i-1,0}),
    % \eqqcolon x_{i-1,0}+\alpha_{i-1}s_{i-1},
\end{equation}
for $\alpha_{i-1}\in[0,1]$ and $t_{i-1}\in[t_{\min},t_{\max}]$. 
Here, $p_{i-1}(t_{i-1})$ is taken from the proximal gradient path, i.e.,
\begin{equation}\label{pi1}
   p_{i-1}(t_{i-1})\coloneqq \Prox_{t_{i-1}\phi_{i-1}}(x_{i-1,0}-t_{i-1}R\nabla \tilde{L}_i(x_{i,k})),
\end{equation}
and $0<t_{\min}\leq t_{\max}<+\infty$ are user-specified parameters.
By~\cite[Proposition 2]{bk2}, there exists $\alpha_{i-1}\in[0,1]$ which is the minimizer of the quadratic optimization problem
\begin{equation}\label{qi_1}
\begin{aligned}
  \min_{\alpha \in [0,\alpha_{i-1}^{\max}]} \;
  q_{i-1,0}(\alpha)
  \coloneqq \;&
  \frac{\alpha^{2}}{2}
    \langle B_{i-1,0}s_{i-1}^c(t),\, s_{i-1}^c(t)\rangle \\
  &\quad
  + \alpha\Big(
      \phi_{i-1}(x_{i-1,0}+s_{i-1}^c(t))
      - \phi_{i-1}(x_{i-1,0}) \\
  &\qquad\qquad
      {}+ \langle R\nabla \tilde{L}_i(x_{i,k}),\, s_{i-1}^c(t) \rangle
    \Big),
\end{aligned}
\end{equation}
where $s_{i-1}^c(t)\coloneqq p_{i-1}(t)-x_{i-1,0}$ for $t\in[t_{\min},t_{\max}]$ and, in a slight misuse of notation, $s_{i-1}^c:=s^c_{i-1}(t_{i-1})$.
Note that for the sake of flexibility we make here the dependence on $t$ explicit, while fixed choices $\{t_i\}_{i=0}^r$, with $t_i\in[t_{\min},t_{\max}]$ for all $i$, are usually taken in the algorithm.

We are now targeting FCD in the two-level setting and start by establishing an auxiliary result concerning a progress estimate for the nonsmooth component of the objective along the prolonged Cauchy direction.
\begin{lemma}\label{phiidrop}
Let $\widetilde{s_i}(t)\coloneqq\Prox_{t\phi_i}(x_{i,k}-t\nabla \tilde{L}_i(x_{i,k}))-x_{i,k}$ for arbitrary $t>0$, and $s_i(t):=R^\top (p_{i-1}(t)-x_{i-1,0})=R^\top s_{i-1}^c(t)$.
Then
\begin{equation}\label{iphidecrease}
    \phi_i(x_{i,k})-\phi_{i}(x_{i,k}+s_i(t))\geq \langle \nabla \tilde{L}_i(x_{i,k}),\widetilde{s_i}(t)\rangle+\frac{1}{t}\|\widetilde{s_i}(t)\|_i^2.
\end{equation}
\end{lemma}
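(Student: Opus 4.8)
The plan is to push the left-hand side of \eqref{iphidecrease} down to level $i-1$, exploit the optimality characterization of the coarse proximal point $p_{i-1}(t)$ there, and carry the resulting estimate back up to level $i$. The three ingredients will be the proximal identity \eqref{proxlowhign}, the definition of $\phi_{i-1}$ from \Cref{lowhigh}, and the structural relations in \Cref{roworthonormal} ($P=R^\top$ with $\sigma_i=1$, and $RR^\top=I$) together with the norm identity \eqref{irelation}.

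First I would identify the coarse proximal-gradient step. Because $x_{i-1,0}=Rx_{i,k}$, the argument of the proximal map in \eqref{pi1} equals $R\bigl(x_{i,k}-t\nabla\tilde{L}_i(x_{i,k})\bigr)$, so \eqref{proxlowhign} yields $p_{i-1}(t)=R\,\Prox_{t\phi_i}\!\bigl(x_{i,k}-t\nabla\tilde{L}_i(x_{i,k})\bigr)=x_{i-1,0}+R\,\widetilde{s_i}(t)$, hence $s_{i-1}^c(t)=R\,\widetilde{s_i}(t)$ and $s_i(t)=R^\top s_{i-1}^c(t)$. Next, \Cref{lowhigh} evaluated at $y=x_{i-1,0}$ and at $y=p_{i-1}(t)$ gives $\phi_i(x_{i,k})=\phi_{i-1}(x_{i-1,0})$ and $\phi_i(x_{i,k}+s_i(t))=\phi_{i-1}\bigl(x_{i-1,0}+s_{i-1}^c(t)\bigr)=\phi_{i-1}(p_{i-1}(t))$, so that the left-hand side of \eqref{iphidecrease} equals $\phi_{i-1}(x_{i-1,0})-\phi_{i-1}(p_{i-1}(t))$.

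Then I would use that $p_{i-1}(t)$ minimizes $y\mapsto\phi_{i-1}(y)+\tfrac1{2t}\bigl\|y-(x_{i-1,0}-tR\nabla\tilde{L}_i(x_{i,k}))\bigr\|^2$, equivalently $\tfrac1t\bigl(x_{i-1,0}-tR\nabla\tilde{L}_i(x_{i,k})-p_{i-1}(t)\bigr)\in\partial\phi_{i-1}(p_{i-1}(t))$. Testing the subgradient inequality at $p_{i-1}(t)$ against $x_{i-1,0}$ and using $x_{i-1,0}-p_{i-1}(t)=-s_{i-1}^c(t)$, one obtains after rearranging
\begin{equation*}
\phi_{i-1}(x_{i-1,0})-\phi_{i-1}(p_{i-1}(t))\;\ge\;\bigl\langle R\nabla\tilde{L}_i(x_{i,k}),\,s_{i-1}^c(t)\bigr\rangle+\tfrac1t\|s_{i-1}^c(t)\|^2 .
\end{equation*}
This single use of convexity is the only genuine analytic step; everything else is algebra.

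Finally I would transport the estimate back to level $i$: write $\langle R\nabla\tilde{L}_i(x_{i,k}),s_{i-1}^c(t)\rangle=\langle\nabla\tilde{L}_i(x_{i,k}),R^\top s_{i-1}^c(t)\rangle=\langle\nabla\tilde{L}_i(x_{i,k}),s_i(t)\rangle$, use \eqref{irelation} (with $P=R^\top$, $\sigma_i=1$) and \Cref{reinorm} to replace $\|s_{i-1}^c(t)\|$ by $\|s_i(t)\|_i$, and then invoke the first-order coherence \eqref{firstoderconsist} together with the row-orthonormality of $R$ to see that the prolonged direction $s_i(t)=R^\top R\,\widetilde{s_i}(t)$ contributes precisely $\langle\nabla\tilde{L}_i(x_{i,k}),\widetilde{s_i}(t)\rangle$ and $\tfrac1t\|\widetilde{s_i}(t)\|_i^2$, which gives \eqref{iphidecrease}. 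I expect this last transport to be the main obstacle: one must carefully track which of $R$, $R^\top$, and $R^\top R$ acts on each quantity and apply \Cref{roworthonormal}, the level-norm definitions \eqref{inorm}--\eqref{Mi}, and \eqref{proxlowhign}--\eqref{firstoderconsist} in the right order so that the coarse estimate collapses to the claimed fine-level bound; the subgradient inequality and the surrounding proximal bookkeeping are routine by comparison.
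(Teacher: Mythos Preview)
Your overall strategy coincides with the paper's: push the decrease of $\phi_i$ down to level $i-1$ via \Cref{lowhigh}, apply the proximal subgradient inequality to $p_{i-1}(t)$ there, and pull the result back up. The identification $s_{i-1}^c(t)=R\,\widetilde{s_i}(t)$ and the use of \eqref{proxlowhign} are exactly what the paper does, and your coarse-level estimate
\[
\phi_{i-1}(x_{i-1,0})-\phi_{i-1}(p_{i-1}(t))\;\ge\;\langle R\nabla\tilde L_i(x_{i,k}),s_{i-1}^c(t)\rangle+\tfrac1t\|s_{i-1}^c(t)\|^2
\]
is correct (the paper quotes it as \cite[Lemma~1, part~1]{bk}).

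The gap is in the final transport. After rewriting the right-hand side as $\langle\nabla\tilde L_i(x_{i,k}),s_i(t)\rangle+\tfrac1t\|s_i(t)\|_i^2$, you claim that substituting $s_i(t)=R^\top R\,\widetilde{s_i}(t)$ ``contributes precisely'' $\langle\nabla\tilde L_i(x_{i,k}),\widetilde{s_i}(t)\rangle$ and $\tfrac1t\|\widetilde{s_i}(t)\|_i^2$. Taken term by term this is false: $\langle\nabla\tilde L_i,R^\top R\,\widetilde{s_i}\rangle\neq\langle\nabla\tilde L_i,\widetilde{s_i}\rangle$ and $\|R^\top R\,\widetilde{s_i}\|^2\le\|\widetilde{s_i}\|^2$ in general, so neither row-orthonormality alone nor \eqref{firstoderconsist} (which concerns $\partial L_{i-1}(x_{i-1,0})$ and is not used here) gives the conclusion. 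What rescues the argument is a second representation of $s_i(t)$ that you never write down, namely the paper's identity \eqref{ssw},
\[
s_i(t)=\widetilde{s_i}(t)+t\,(I-R^\top R)\nabla\tilde L_i(x_{i,k}),
\]
obtained by expanding $R^\top\Prox_{t\phi_{i-1}}(R(\cdot))$ via \eqref{proxlowhign}. The paper then writes $\tfrac1t\|s_{i-1}^c\|_{i-1}^2=\tfrac1t\langle s_i(t),\widetilde{s_i}(t)\rangle$ and substitutes \eqref{ssw} there while using $s_i(t)=R^\top R\,\widetilde{s_i}(t)$ in the gradient term; the two projection defects $\langle R^\top R\nabla\tilde L_i,\widetilde{s_i}\rangle$ and $\langle(I-R^\top R)\nabla\tilde L_i,\widetilde{s_i}\rangle$ then recombine to $\langle\nabla\tilde L_i,\widetilde{s_i}\rangle$, and the remaining piece is exactly $\tfrac1t\|\widetilde{s_i}\|_i^2$. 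Once you add \eqref{ssw} and carry out this split, your proof is complete and matches the paper's.
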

\begin{proof}
%Let $s_{i-1}\coloneqq p_{i-1}(t_{i-1})-x_{i-1,0},$ where $p_{i-1}(t)$ is defined as \eqref{pi1}.
Definition \ref{lowhigh}, Assumption \ref{roworthonormal}, and \eqref{proxlowhign} yield for arbitrary $t>0$ that 
\begin{equation}\label{ssw}
\begin{aligned}
    s_{i} (t)
    %&= R^\top(\Prox_{t\phi_{i-1}}(x_{i-1,0}-tR\nabla \tilde{L}_i(x_{i,k}))-x_{i-1,0})\\
    &=R^\top(\Prox_{t\phi_{i-1}}(R x_{i,k}-tR\nabla \tilde{L}_i(x_{i,k}))-x_{i-1,0})\\
    &=(R^\top R-I) (x_{i,k}-t\nabla \tilde{L}_i(x_{i,k}))+\Prox_{t\phi_{i}}(x_{i,k}-t\nabla \tilde{L}_i(x_{i,k}))-R^\top R x_{i,k}\\
    & =\widetilde{s_i}(t)+t(I-R^\top R)\nabla \tilde{L}_i(x_{i,k}).
    \end{aligned}
\end{equation}
Note that $R s_i(t) = R \widetilde {s_i}(t)$ from \eqref{ssw} and Assumption \ref{roworthonormal}, and further 
$$R^\top R\widetilde{s_i}(t)=R^\top Rs_i(t)=R^\top s_{i-1}^c(t)=s_i(t).$$
By this, Definition \ref{lowhigh}, Assumption \ref{roworthonormal}, \eqref{irelation}, and~\cite[part 1 of Lemma 1]{bk} for the inequality, we have that
\begin{align*}
    \phi_i(x_{i,k}) - \phi_i(x_{i,k}+s_i(t))
    &= \phi_{i-1}(x_{i-1,0}) - \phi_{i-1}(x_{i-1,0}+s_{i-1}^c(t)) \\
    &\ge \frac{1}{t}\,\langle s_{i-1}^c(t) + t R\nabla \tilde{L}_i(x_{i,k}),\; s_{i-1}^c(t) \rangle \\
    &= \langle \nabla \tilde{L}_i(x_{i,k}),\, s_i(t) \rangle
       + \frac{1}{t}\|s_{i-1}^c(t)\|^{2}_{i-1} \\
    &= \langle \nabla \tilde{L}_i(x_{i,k}),\, s_i(t) \rangle
       + \frac{1}{t}\|R\widetilde{s}_i(t)\|_{i-1}^{2} \\
    &= \langle \nabla \tilde{L}_i(x_{i,k}),\, s_i(t) \rangle
       + \frac{1}{t}\langle R^\top R\widetilde{s}_i(t),\; \widetilde{s}_i(t) \rangle \\
    &= \langle \nabla \tilde{L}_i(x_{i,k}),\, s_i(t) \rangle
       + \frac{1}{t}\langle s_i(t),\; \widetilde{s}_i (t)\rangle \\
    &=
      \langle \nabla \tilde{L}_i(x_{i,k}),\; R^\top R\widetilde{s}_i(t) \rangle
      + \frac{1}{t}\|\widetilde{s}_i(t)\|_i^{2} \\
    &\qquad
      + \left\langle (I - R^\top R)\,\nabla \tilde{L}_i(x_{i,k}),\;
        \widetilde{s}_i(t) \right\rangle \\
    &= \langle \nabla \tilde{L}_i(x_{i,k}),\; \widetilde{s}_i (t)\rangle
       + \frac{1}{t}\|\widetilde{s}_i(t)\|_i^{2},
\end{align*}
which completes the proof.
\end{proof}

Next, we establish the FCD for the prolonged Cauchy step.
\begin{proposition}\label{subsolveralpha}
    %Let $s_{i-1}\coloneqq p_{i-1}(t)-x_{i-1,0}$ with arbitrary $t\in[t_{\min},t_{\max}]$. 
    Let $\alpha_i\in[0,1]$ be the minimizer of the quadratic optimization problem in \eqref{qi_1}, %i.e.,
    %\begin{equation*}
    %\begin{aligned}
    % &\min_{\alpha\in\rr} q_{i-1,0}(\alpha) 
    % \quad\text { subject to } \quad 0\leq\alpha\leq\alpha_{i-1}^{\max}, 
    %\end{aligned}
    %\end{equation*}
    with $\alpha_{i-1}^{\max}\coloneqq \min\left\{1,\Delta_{i,k}\|\widetilde{s_i}(t)\|_i^{-1}\right\}.$
    If $h_{i,k}>0,$ then $s_i(t)=Ps_{i-1}^c(t)=R^\top s_{i-1}^c(t)$ satisfies \eqref{taylorfcd} with $\kappa_{\rm fcd }=\frac{1}{2}\min\{1,t_{\min}\kappa_{\rm stop}^2,\kappa_{\rm stop}^4/(\kappa_H-1)\}$. 
\end{proposition}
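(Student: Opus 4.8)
The plan is to reduce the coarse-model decrease along the prolonged spectral Cauchy step to the scalar quadratic $q_{i-1,0}$ of \eqref{qi_1} and then apply a textbook Cauchy-decrease estimate. By Step~2 of \Cref{RMNTR}, the decrease of the model $m_{i,k}$ produced by the step $s_{i,k}=\alpha_i P s^{c}_{i-1}(t)$ equals $L_{i-1}(x_{i-1,0})-L_{i-1}(x_{i-1,0}+\alpha_i s^{c}_{i-1}(t))$. I would first expand $L_{i-1}$ through \eqref{mlow}, bound $\phi_{i-1}(x_{i-1,0}+\alpha s^{c}_{i-1})$ above by $(1-\alpha)\phi_{i-1}(x_{i-1,0})+\alpha\,\phi_{i-1}(x_{i-1,0}+s^{c}_{i-1})$ (convexity of $\phi_{i-1}$) and use the quadratic model of the smooth part, obtaining that this decrease is at least $-q_{i-1,0}(\alpha_i)$; by \cite[Proposition 2]{bk2}, $\alpha_i$ is the exact minimizer of $q_{i-1,0}$ on $[0,\alpha_{i-1}^{\max}]$, so it remains to bound $-q_{i-1,0}(\alpha_i)$ from below.

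Next I would write $q_{i-1,0}(\alpha)=\tfrac{a}{2}\alpha^{2}+b\alpha$ with $a=\langle B_{i-1,0}s^{c}_{i-1}(t),s^{c}_{i-1}(t)\rangle$ and $b$ the linear coefficient in \eqref{qi_1}, and estimate both coefficients. Applying \cite[part 1 of Lemma 1]{bk} to the prox step $p_{i-1}(t)$ of \eqref{pi1} gives $-b\ge t^{-1}\|s^{c}_{i-1}(t)\|_{i-1}^{2}=t\,h_{i-1,0}^{2}$; since the coarse model was selected, the test \eqref{fail} is active, so $h_{i-1,0}\ge\kappa_{\rm stop}h_{i,k}$, and with $t\ge t_{\min}$ and $h_{i,k}>0$ this yields $-b\ge t\kappa_{\rm stop}^{2}h_{i,k}^{2}>0$. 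The bound \eqref{boundhess} at level $i-1$ gives $a\le(\kappa_H-1)\|s^{c}_{i-1}(t)\|_{i-1}^{2}$. From \eqref{ssw}, $RR^{\top}=I$ and Remark~\ref{reinorm} I would record $\|s^{c}_{i-1}(t)\|_{i-1}=\|R\widetilde{s_i}(t)\|\le\|\widetilde{s_i}(t)\|_i=t\,h_{i,k}$ and $\|P s^{c}_{i-1}(t)\|_i=\|s^{c}_{i-1}(t)\|_{i-1}$ (by \eqref{irelation}); together with $\alpha_i\le\alpha_{i-1}^{\max}\le\Delta_{i,k}\|\widetilde{s_i}(t)\|_i^{-1}$ this yields both the feasibility $\|s_{i,k}\|_i=\alpha_i\|s^{c}_{i-1}(t)\|_{i-1}\le\Delta_{i,k}$ (i.e., \eqref{steptr}) and $h_{i-1,0}\le h_{i,k}$. \Cref{phiidrop} is where the coarse-level nonsmooth progress is matched to the fine-level measure $\|\widetilde{s_i}(t)\|_i=t\,h_{i,k}$.

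Finally I would invoke the elementary one-dimensional estimate: for $q(\alpha)=\tfrac a2\alpha^2+b\alpha$ with $b<0$, minimized over $[0,\alpha_{i-1}^{\max}]$, the three cases (interior stationary point; boundary with $a\le0$; boundary with $a>0$) give $-q(\alpha_i)\ge\tfrac12|b|\min\{|b|/a_+,\alpha_{i-1}^{\max}\}$ with $a_+=\max\{a,0\}$. Inserting $|b|/a_+\ge t^{-1}(\kappa_H-1)^{-1}$, $\alpha_{i-1}^{\max}=\min\{1,\Delta_{i,k}/(t\,h_{i,k})\}$, $|b|\ge t\kappa_{\rm stop}^{2}h_{i,k}^{2}$ and $t\ge t_{\min}$ gives
\[
-q_{i-1,0}(\alpha_i)\;\ge\;\tfrac12\kappa_{\rm stop}^{2}\,h_{i,k}\,\min\Bigl\{\tfrac{h_{i,k}}{\kappa_H-1},\;t_{\min}\,h_{i,k},\;\Delta_{i,k}\Bigr\}.
\]
Since $\kappa_{\rm stop}\in(0,1)$ and $1\le 1+\|B_{i,k}\|_i\le\kappa_H$ by \eqref{boundhess} and Remark~\ref{reinorm}, the right-hand side is no smaller than $\kappa_{\rm fcd}\,h_{i,k}\min\{h_{i,k}/(1+\|B_{i,k}\|_i),\Delta_{i,k}\}$ with $\kappa_{\rm fcd}=\tfrac12\min\{1,t_{\min}\kappa_{\rm stop}^{2},\kappa_{\rm stop}^{4}/(\kappa_H-1)\}$, which is \eqref{taylorfcd}.

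I expect the main obstacle to be this last reconciliation: the coarse-level Cauchy decrease is naturally expressed through level-$(i-1)$ objects ($h_{i-1,0}$, $\|s^{c}_{i-1}\|_{i-1}$, $\|B_{i-1,0}\|$), whereas \eqref{taylorfcd} must be phrased in level-$i$ objects ($h_{i,k}$, $\Delta_{i,k}$, $\|B_{i,k}\|_i$). The model-selection test \eqref{fail} is precisely the bridge between the two stationarity measures, and one must keep careful track of which norm and which trust-region radius each quantity lives under — Assumption~\ref{roworthonormal}, the identity \eqref{irelation} and Remark~\ref{reinorm} being exactly what keeps this bookkeeping consistent and the resulting constant uniform in the level. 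The scalar case split and the remaining arithmetic isolating the stated $\kappa_{\rm fcd}$ are routine once these identifications are in place.
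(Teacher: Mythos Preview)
Your strategy is the same as the paper's: reduce the model decrease to $-q_{i-1,0}(\alpha_i)$ via convexity of $\phi_{i-1}$, bound the linear coefficient $b$ using \cite[part~1 of Lemma~1]{bk} and the selection test \eqref{fail}, bound the curvature via \eqref{boundhess}, and analyze the scalar quadratic. The norm bookkeeping through \eqref{irelation}, \eqref{ssw} and Remark~\ref{reinorm} is also handled correctly.

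The difference lies in the interior-minimizer case and the final constant extraction. The paper bounds $d_i(t)^2/(2\kappa_i(t))$ by squaring $|d_i|\ge t\kappa_{\rm stop}^2h_{i,k}^2$ (hence the $\kappa_{\rm stop}^4$) and then splits according to whether $\|B_{i-1,0}\|_{i-1}\le 1+\|B_{i,k}\|_i$ or not, which is what makes the denominator $1+\|B_{i,k}\|_i$ appear naturally and yields precisely the stated $\kappa_{\rm fcd}$. Your coarser route---combining $|b|\ge t\kappa_{\rm stop}^2h_{i,k}^2$ once with $|b|/a_+\ge 1/(t(\kappa_H-1))$---gives a valid FCD constant, but not the one in the statement. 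Concretely, your final display implies FCD with $\tfrac12\kappa_{\rm stop}^2\min\{1,t_{\min},1/(\kappa_H-1)\}$, and the claimed inequality to the stated $\kappa_{\rm fcd}$ fails when $\min\{1,t_{\min}\kappa_{\rm stop}^2,\kappa_{\rm stop}^4/(\kappa_H-1)\}=1$: take $\kappa_{\rm stop}=0.9$, $\kappa_H=1.5$, $t_{\min}=10$, $h_{i,k}=1$, $\Delta_{i,k}=0.5$, $\|B_{i,k}\|_i=0$; your bound gives $0.2025$ while the target $\kappa_{\rm fcd}\,h_{i,k}\min\{h_{i,k}/(1+\|B_{i,k}\|_i),\Delta_{i,k}\}$ equals $0.25$. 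So the last sentence (``the right-hand side is no smaller than \ldots'') needs the paper's finer Case~3 split to go through with the exact constant. (Also, you cite \Cref{phiidrop} but do not actually use it---neither does the paper in this proof.)
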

\begin{proof}
    We first recall that $h_{i,k} = t^{-1}\|x_{i,k}-\Prox_{t\phi_i}(x_{i,k}-t\nabla \tilde{L}_i(x_{i,k}))\|_i$ for fixed $t>0$. 
    At each iteration $k$ at level $i$, we either minimize (decrease) the Taylor model \eqref{taylor} or the lower level model \eqref{mlow}. If we choose the Taylor model, then the results are covered by~\cite[Proposition 2]{bk2}. Otherwise, we choose the low level model, and it suffices to work with its quadratic approximation since the optimization step is determined entirely by this local approximation. As we only discuss two levels in this section, we get
    from Step 2 in Algorithm 2 and by the convexity of $\phi_i$ that, for $\alpha\in [0,1]$, 
\begin{equation}\label{lowmd}
\begin{aligned}
&m_{i,k}(x_{i,k}) - m_{i,k}(x_{i,k}+\alpha s_i(t)) \\
&\quad =\, f_{i-1,0}(x_{i-1,0}) - f_{i-1,0}(x_{i-1,0}+\alpha s_{i-1}^c(t)) + \phi_{i-1}(x_{i-1,0}) - \phi_{i-1}(x_{i-1,0}+\alpha s^c_{i-1}(t)) \\
&\qquad\qquad - \left\langle R\nabla \tilde{L}_i(x_{i,k}) 
      - \nabla f_{i-1}(x_{i-1,0}),\, \alpha s_{i-1}^c(t) \right\rangle \\
&\quad = -\frac{1}{2}\alpha^2 \langle B_{i-1,0}s_{i-1}^c(t), s_{i-1}^c(t)\rangle
        - \alpha \langle R\nabla \tilde{L}_i(x_{i,k}), s_{i-1}^c(t) \rangle
        + \phi_{i-1}(x_{i-1,0})\\
        &\qquad\qquad - \phi_{i-1}(x_{i-1,0}+\alpha s_{i-1}^c(t)) \\
&\quad \ge -\frac{1}{2}\alpha^2 \langle B_{i-1,0}s_{i-1}^c(t), s_{i-1}^c(t)\rangle
        - \alpha\!\left[
            \langle R\nabla \tilde{L}_i(x_{i,k}), s_{i-1}^c(t) \rangle
            - \big(\phi_{i-1}(x_{i-1,0})\right.\\
            &\qquad \qquad \left. - \phi_{i-1}(x_{i-1,0}+ s_{i-1}(t))\big)
          \right] \\
%&\quad = -\frac{1}{2}\alpha^2 \langle B_{i-1,0}s_{i-1}^c(t), s_{i-1}^c(t)\rangle
%        - \alpha\!\left[
%            \langle \nabla \tilde{L}_i(x_{i,k}), s_i(t) \rangle
%            - \big(\phi_i(x_{i,k}) - \phi_i(x_{i,k}+s_i(t))\big)
%          \right] \\
&\quad = -q_{i-1,0}(\alpha).
\end{aligned}
\end{equation}

    For the ease of notation, we define the following quantities:
    \begin{eqnarray*}\kappa_i(t)&\coloneqq& \langle B_{i-1,0}s_{i-1}^c(t),s_{i-1}^c(t)\rangle,\\ d_i(t)&\coloneqq&\langle \nabla R\tilde{L}_i(x_{i,k}),s_{i-1}^c(t)\rangle-(\phi_{i-1}(x_{i-1,0})-\phi_{i-1}(x_{i-1,0}+ s_{i-1}^c(t))),
    \end{eqnarray*}
and note that if $\kappa_i(t)>0$, then the unconstrained minimizer of $q_{i-1}$ is given by $-d_i(t)/\kappa_i(t)$. By \cite[part 1 of Lemma 1]{bk}, , the first inequality of \eqref{fail}, and the fact that $t>0$, we have that
\begin{equation*}
 d_i(t)\leq -\frac{1}{t}\|s_{i-1}^c(t)\|^2=-th_{i-1,0}^2\leq-t\kappa_{\rm stop}^2h_{i,k}^2,   
\end{equation*}
 In this case, we have $\alpha_i=\min\{-d_i(t)/\kappa_i(t),\alpha_{i,\max}\}$. When $\kappa_i(t)=0$, then we have $q_{i-1}(\alpha)=\alpha d_i(t)\leq -\alpha t\kappa_{\rm stop}^2h_{i,k}^2$. Therefore, $\alpha_i=\alpha_{i,\max}>0$. Finally, if $\kappa_i(t)<0$, then $q_{i-1}$ is concave and $\alpha_i$ is either 0 or $\alpha_{i,\max}$. Considering the two cases that define $\alpha_{i,\max}$, we obtain that
$q_i(\alpha_{i,\max})\leq -h_{i,k}\min\{t\kappa_{\rm stop}^2h_{i,k},\Delta_{i,k}\}<0=q_i(0)$
and hence, $\alpha_i=\alpha_{i,\max}.$ This demonstrates that there are three cases which we must discuss: $\alpha_i=1$, $\alpha_i=\Delta_{i,k}/\|\widetilde{s_i}(t)\|_i$, and $\alpha_i=-d_i(t)/\kappa_i(t)$. 

For the first two cases $\alpha_i = 1$ and $\alpha_i = \Delta_{i,k} / \|\widetilde{s_i}\|_i$, respectively, the proofs follow the same arguments as in \cite[Proposition~2]{bk2}. 
By replacing $t$, $h_k$, $B_k$, and $s_k$ with $t\kappa_{\rm stop}^2$, $h_{i,k}$, $B_{i,k}$, and $\widetilde{s_i}$, respectively, and repeating the first two case analyses in \cite[Proposition~2]{bk2}, we obtain \eqref{taylorfcd}.\\
\noindent\textbf{Case 3.} $\alpha_i=-d_i(t)/\kappa_i(t)$: In this case, $0<-d_i(t)\leq\kappa_i(t)\leq\|B_{i-1,0}\|_{i-1}\cdot\|s_{i-1}^c(t)\|_{i-1}^2$. Note that $R\widetilde{s_i}(t)=Rs_i(t)=RR^\top s_{i-1}^c(t)=s_{i-1}^c(t)$. Then we have that $\|s_{i-1}^c(t)\|_{i-1}=\|R\widetilde{s_i}(t)\|_{i-1}\leq\|R\|\cdot\|\widetilde{s_i}(t)\|_{i}=\|\widetilde{s_i}(t)\|_{i}$ and
$$m_{i,k}(x_{i,k})-m_{i,k}(x_{i,k}+\alpha_is_i(t))\geq\frac{d_i(t)^2}{2\kappa_i(t)}\geq\frac{t^2\kappa_{\rm stop}^4h_{i,k}^4}{2\|B_{i-1,0}\|_{i-1}\cdot\|\widetilde{s_i}(t)\|_{i}^2}=\frac{\kappa_{\rm stop}^4h_{i,k}^2}{2\|B_{i-1,0}\|_{i-1}}.$$
If $\|B_{i-1,0}\|_{i-1}\leq 1+\|B_{i,k}\|_i$, then it is obvious that 
$$m_{i,k}(x_{i,k})-m_{i,k}(x_{i,k}+\alpha_is_i(t))\geq\frac{1}{2}\cdot\frac{\kappa_{\rm stop}^4h_{i,k}^2}{1+\|B_{i,k}\|_{i}}.$$
Otherwise, by \eqref{boundhess}, we have that $\frac{1+\|B_{i,k}\|_{i}}{\|B_{i-1,0}\|_{i-1}}\geq\frac{1}{\kappa_H-1}$ and hence,
$$m_{i,k}(x_{i,k})-m_{i,k}(x_{i,k}+\alpha_is_i(t))\geq\frac{\kappa_{\rm stop}^4}{2(\kappa_H-1)}\cdot \frac{h_{i,k}^2}{1+\|B_{i,k}\|_{i}}.$$
Combining the cases 1, 2, and 3 proves that \eqref{taylorfcd} holds for $\alpha_i s_i(t)$.
\end{proof}
As a consequence, we may apply the spectral proximal gradient (SPG) subproblem solver to compute the update step for the lower level model $L_{i-1}$ as in~\cite{bk2} with a small modification. For the convenience of the reader, we present the SPG algorithm here in Algorithm 3. 
We recall that $$h_{i-1,0}=\frac{1}{t}\left\|x_{i-1,\ell}-\Prox_{t\phi_{i-1}}(x_{i-1,\ell}-tR\nabla \tilde{L}_i(x_{i,k}))\right\|_{i-1}.$$
From \eqref{Mi}, Assumption \ref{roworthonormal}, and \eqref{lowradius}, we know that, for each iteration $\ell$ in level $i-1$, 
\begin{equation}\label{lr}
  \|x_{i-1,\ell}-x_{i-1,0}\|_{i-1}\leq\Delta_{i,k}.  
\end{equation}

\begin{algorithm}[hbt!]
\caption{SPG Trust-Region Subproblem Solver~\cite{bk2} }
\label{alg:Subalg}
\begin{algorithmic}[1]
\State\textbf{Require:} Initial guess $x_{i-1,0}$, $f_{i-1,0}=f_{i-1,0}(x_{i-1,0})$, $\phi_{i-1,0}=\phi_{i-1}(x_{i-1,0})$, $m_{i-1,0}=f_{i-1,0}+\phi_{i-1,0}$, $d_{i-1,0}=R\nabla \tilde{L}_i(x_{i,k})$, and an integer \texttt{maxit}, and positive tolerances $\widetilde{\tau}$ and $\tau_{i-1}$, the positive safeguards $t_{\min}\leq t_{\max}$, and $t_{i-1,0}=t\in[t_{\min},t_{\max}]$
\State  Set $\ell=0$
\State \textbf{while} $\ell<$\texttt{maxit} and $h_{i-1,\ell}>\min\{\widetilde{\tau},\tau_{i-1}h_{i-1,0}\}$ and $\|x_{i-1,\ell}-x_{i-1,0}\|_{i-1}\leq\Delta_{i,k}$ do
\State \quad Set $s\gets\Prox_{t_{i-1,\ell}\phi_{i-1}}(x_{i-1,\ell}-t_{i-1,\ell}d_{i-1,\ell})-x_{i-1,\ell}$
\State \quad Set $\alpha_{\max}\gets 1$
\State \quad \textbf{if} $\|x_{i-1,\ell}+s-x_{i-1,0}\|_{i-1}>\Delta_{i,k}$ \textbf{then}
\State\quad\quad Set $\alpha_{\max}>0$ so that $\|x_{i-1,\ell}+\alpha_{\max}s-x_{i-1,0}\|_{i-1}=\Delta_{i,k}$
\State\quad \textbf{end if}
\State \quad Compute $\widehat{\phi}_{i-1,\ell}\gets\phi_{i-1}(x_{i-1,\ell}+s)$, $b\gets B_{i-1,0}s$, and $\kappa\gets\langle b,s\rangle$
\State \quad\textbf{if} $\kappa\leq 0$ \textbf{then}
\State \quad\quad Set $\alpha\gets\alpha_{\max}$
\State \quad\textbf{else}
\State\quad\quad Set $\alpha\gets\min\{\alpha_{\max},-(\langle d_{i-1,\ell},s\rangle+\widehat{\phi}_{i-1,\ell}-\phi_{i-1,\ell})/\kappa\}$
\State\quad\textbf{end if}
\State\quad Set $x_{i-1,\ell+1}\gets x_{i-1,\ell}+\alpha s$, $d_{i-1,\ell+1}\gets d_{i-1,\ell}+\alpha b$ and $\phi_{i-1,\ell+1}\gets\phi_{i-1}(x_{i-1,\ell+1})$
\State\quad\textbf{if} $\kappa\leq 0$ \textbf{then}
\State\quad\quad Set $\bar{t}\gets t/\|d_{i-1,\ell}\|$
\State\quad\textbf{else}
\State\quad\quad Set $\bar{t}\gets\langle s,s\rangle/\kappa$
\State\quad\textbf{end if}
\State\quad Set $t_{i-1,\ell+1}\gets\max\{t_{\min},\min\{t_{\max},\bar{t}\}\}$
\State\quad Set $\ell\gets\ell+1$
\State\textbf{end while}
\State Return $x_{i-1,*}\gets x_{i-1,\ell}$ as the approximate solution
\end{algorithmic}
\end{algorithm}

\section{Global convergence}\label{sec:globalcon}
In this section, we also restrict the discussion to the two-level case, for the same reasons outlined in \Cref{sec:trsub}. Although the proposed algorithm and analysis extend naturally to the full multilevel setting by recursive settings, focusing on two levels simplifies the exposition and suffices to establish the key arguments underlying the global convergence proof.
The extension to the full multilevel case follows by recursion and is provided in the Appendix \ref{appendix}.

Inspired by the convergence theory reported in~\cite{bk}, we prove the global convergence of the proposed methods and we provide a worst-case complexity bound to reach such a point, generalizing the theory proposed in~\cite{bk}. 
To prove the global convergence of Algorithm 2, we first establish two technical lemmas. In the first lemma, we show that a sufficiently small radius $\Delta_{i,k}$ guarantees in iteration $k$ that $\rho_k\geq\eta_2$. As a consequence, the new trust-region radius satisfies $\Delta_{i,k+1}\geq\Delta_{i,k}$.
\begin{lemma}\label{upbound}
 Consider an iteration $(i,k)$ for which $h_{i,k}>0$ and 
 \begin{equation}\label{conD}
     \Delta_{i,k}\leq\kappa_{s}h_{i,k},
 \end{equation}
 then $\rho_k\geq\eta_2$ and $\Delta_{i,k+1}\geq\Delta_{i,k}$,
 where $\kappa_s\coloneqq \kappa_{\rm fcd }(1-\eta_2)/\kappa_H<1$ with $\kappa_{\rm fcd }$ as in Proposition \ref{subsolveralpha} and $\kappa_H$ as in \eqref{boundhess}.
\end{lemma}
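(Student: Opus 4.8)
The plan is to show that, under the smallness condition \eqref{conD}, the model reduction $\pred_{i,k}=m_{i,k}(x_{i,k})-m_{i,k}(x_{i,k}+s_{i,k})$ dominates the discrepancy $|\ared_{i,k}-\pred_{i,k}|$ so strongly that $\rho_{i,k}\ge\eta_2$, from which the radius update in Step 6 immediately yields $\Delta_{i,k}^+\in[\Delta_{i,k},\gamma_3\Delta_{i,k}]$, hence $\Delta_{i,k+1}\ge\Delta_{i,k}$ after noting that the $\min$ in \eqref{deres} does not bite (the iterate stays strictly inside the level-$(i+1)$ trust region, otherwise Step 5 would have already terminated; this is exactly the role of \eqref{lr} / \cite[Lemma 4.1]{gst}). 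So the substance is the bound $\rho_{i,k}\ge\eta_2$.

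First I would lower-bound $\pred_{i,k}$. By \Cref{subsolveralpha} (or \cite[Prop.~2]{bk2} if the Taylor model was chosen), the step satisfies the FCD \eqref{taylorfcd}, so
\begin{equation*}
  \pred_{i,k}\;\ge\;\kappa_{\rm fcd}\,h_{i,k}\min\!\left\{\frac{h_{i,k}}{1+\|B_{i,k}\|_i},\,\Delta_{i,k}\right\}.
\end{equation*}
Using \eqref{conD}, namely $\Delta_{i,k}\le\kappa_s h_{i,k}$, together with $\kappa_s<1$ and $1+\|B_{i,k}\|_i\le\kappa_H$ from \eqref{boundhess}, one checks that the minimum is attained at $\Delta_{i,k}$ (since $\Delta_{i,k}\le\kappa_s h_{i,k}<h_{i,k}\le h_{i,k}\kappa_H/(1+\|B_{i,k}\|_i)$), giving $\pred_{i,k}\ge\kappa_{\rm fcd}\,h_{i,k}\Delta_{i,k}$.

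Next I would bound the numerator discrepancy $|\ared_{i,k}-\pred_{i,k}|=|L_i(x_{i,k}+s_{i,k})-m_{i,k}(x_{i,k}+s_{i,k})|$. Here one uses that $m_{i,k}$ agrees with $L_i$ to first order at $x_{i,k}$ — in the Taylor case this is immediate from \eqref{taylor} and $L$-smoothness of $f_i$, while in the recursive case it is guaranteed by the first-order coherence \eqref{firstoderconsist} / \eqref{mlow} (the nonsmooth part $\phi_i$ cancels exactly by Definition \ref{lowhigh}, and the modified smooth parts share the same gradient $R_i^{i-1}\nabla\tilde L_i(x_{i,k})$). A Taylor expansion with integral remainder, combined with the uniform Hessian bound \eqref{boundhess} and $\|s_{i,k}\|_i\le\Delta_{i,k}$ from \eqref{steptr}, yields $|\ared_{i,k}-\pred_{i,k}|\le C\,\kappa_H\,\Delta_{i,k}^2$ for an absolute constant $C$ (one may take $C=1$ with the standard $\tfrac12\|B\|\|s\|^2$ estimate and a matching bound on $\nabla^2 f_i$; I would keep track of constants so that they match the definition $\kappa_s=\kappa_{\rm fcd}(1-\eta_2)/\kappa_H$). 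Then
\begin{equation*}
  |1-\rho_{i,k}|=\frac{|\ared_{i,k}-\pred_{i,k}|}{\pred_{i,k}}\le\frac{\kappa_H\Delta_{i,k}^2}{\kappa_{\rm fcd}h_{i,k}\Delta_{i,k}}=\frac{\kappa_H\Delta_{i,k}}{\kappa_{\rm fcd}h_{i,k}}\le\frac{\kappa_H}{\kappa_{\rm fcd}}\cdot\kappa_s=1-\eta_2,
\end{equation*}
so $\rho_{i,k}\ge\eta_2$, as claimed. Finally the radius conclusion: Step 6 with $\rho_{i,k}\ge\eta_2$ sets $\Delta_{i,k}^+\ge\Delta_{i,k}$, and since the run did not terminate at Step 5 we have $\|x_{i,k+1}-x_{i,0}\|_i\le(1-\epsilon_i^\Delta)\Delta_{i+1}$, whence $\Delta_{i+1}-\|x_{i,k+1}-x_{i,0}\|_i\ge\epsilon_i^\Delta\Delta_{i+1}$; one argues (as in \cite[Lemma 4.1]{gst}) that this is at least $\Delta_{i,k}$, so the $\min$ in \eqref{deres} gives $\Delta_{i,k+1}\ge\Delta_{i,k}$.

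The main obstacle I anticipate is the discrepancy estimate in the \emph{recursive} case: unlike the classical smooth Taylor model, here $m_{i,k}(x_{i,k}+s_i)=L_{i-1}(x_{i-1,0}+s_{i-1})$ is defined only implicitly through the coarse solve, so one must carefully verify that (i) the value and gradient of $m_{i,k}$ at $x_{i,k}$ coincide with those of $L_i$ — which is precisely where \eqref{firstoderconsist}, Definition \ref{lowhigh}, and \eqref{ssw}-type identities are invoked — and (ii) the second-order (curvature) term is controlled uniformly in $k$ via \eqref{boundhess} applied at the coarse level, using that $\|P_{i-1}^i\|_{M_i}=\|R_i^{i-1}\|=1$ from \Cref{roworthonormal} so that prolongation does not inflate step lengths (cf.\ \eqref{irelation}). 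Getting the constants to line up exactly with $\kappa_s=\kappa_{\rm fcd}(1-\eta_2)/\kappa_H$ is the delicate bookkeeping, but no new idea is needed beyond Taylor's theorem and the uniform bounds already assumed.
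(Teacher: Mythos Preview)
Your proposal is correct and follows essentially the same approach as the paper: lower-bound $\pred_{i,k}$ via the FCD (showing the $\min$ is attained at $\Delta_{i,k}$), upper-bound $|\ared_{i,k}-\pred_{i,k}|$ by $\kappa_H\Delta_{i,k}^2$ via Taylor's theorem and \eqref{boundhess} (treating the Taylor and recursive model cases separately, with $\phi$ canceling in the latter), and combine to get $|1-\rho_{i,k}|\le 1-\eta_2$. The paper's proof in fact stops at $\rho_{i,k}\ge\eta_2$ and does not spell out the $\Delta_{i,k+1}\ge\Delta_{i,k}$ conclusion, so your extra discussion of the $\min$ in \eqref{deres} goes slightly beyond what the paper writes (and your appeal to \cite[Lemma~4.1]{gst} for $\epsilon_i^\Delta\Delta_{i+1}\ge\Delta_{i,k}$ is not quite what that lemma gives, but this portion is peripheral).
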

\begin{proof}
In iteration $k$ at level $i$ we either minimize (decrease) the Taylor model \eqref{taylor}, or the lower level model \eqref{mlow}.  In both cases, it holds that
\begin{equation}\label{ml}
    m_{i,k}(x_{i,k})-m_{i,k}(x_{i,k}+\alpha s_{i,k})\geq\kappa_{\rm fcd }h_{i,k}\min\left\{\frac{h_{i,k}}{1+\|B_{i,k}\|},\Delta_{i,k}\right\}\geq \kappa_{\rm fcd }h_{i,k}\Delta_{i,k},
\end{equation}
where $\alpha\in [0,1]$ is as in \Cref{sec:trsub}, and we used \eqref{conD} to get the last inequality.
Let us consider the quantity
\begin{equation}
    |1-\rho_{i,k}|=\left|1-\frac{L_i(x_{i,k}) - L_i(x_{i,k}+\alpha s_{i,k})}{m_{i,k}(x_{i,k})-m_{i,k}(x_{i,k}+\alpha s_{i,k})}\right|\eqqcolon \left|1-\frac{\ared_{i,k}}{\pred_{i,k}}\right|
\end{equation}
If, in step $k$, the Taylor model is chosen, then we have by \eqref{taylor} and Taylor's theorem:
\begin{equation}\label{tde}
\begin{aligned}
   |\ared_{i,k}-\pred_{i,k}|
   &= \Bigl|
        f_i(x_{i,k})
      - f_i(x_{i,k}+\alpha s_{i,k}) \\
   &\qquad\quad
      + \left\langle \nabla \tilde{L}_i(x_{i,k}),\, \alpha s_{i,k} \right\rangle
      + \frac{\alpha^{2}}{2}
        \left\langle B_{i,k}s_{i,k},\, s_{i,k} \right\rangle
      \Bigr| \\
   &\le \frac{\alpha^{2}}{2}
      \Bigl|\left\langle
           \left( B_{i,k}-\nabla^{2}f_i(\xi_{i,k}) \right)s_{i,k},
           s_{i,k}
      \right\rangle\Bigr| \\
   &\le \kappa_H\,\Delta_{i,k}^{2},
\end{aligned}
\end{equation}
where $\xi_{i,k}$ is on the line segment $[x_{i,k},x_{i,k}+\alpha s_{i,k}]$ and, in the last step, \eqref{boundhess} is used.
If the lower level model is chosen, then we have by $s_{i,k}=Ps_{i-1}$ and \eqref{lowmd} that
\begin{equation}\label{lde}
\begin{aligned}
   |\ared_{i,k}-\pred_{i,k}|
   &=\Bigl|
      f_i(x_{i,k}) - f_i(x_{i,k}+\alpha s_{i,k}) \\
   &\qquad\quad
      + \left\langle \nabla \tilde{L}_i(x_{i,k}),\, \alpha s_{i,k} \right\rangle
      + \frac{\alpha^{2}}{2}
        \left\langle B_{i-1,0}s_{i-1,0},\, s_{i-1,0} \right\rangle
     \Bigr| \\
   &\le \frac{\alpha^{2}}{2}\Bigl(
       |\langle B_{i-1,0}s_{i-1}, s_{i-1}\rangle|
       + |\langle \nabla^{2} f_i(\xi_{i,k}) s_{i,k},\, s_{i,k}\rangle|
     \Bigr) \\
   &\le \kappa_H\, \Delta_{i,k}^{2}.
\end{aligned}
\end{equation}
where, in the last step, we used \eqref{lr}.
Combining \eqref{tde}, \eqref{lde}, \eqref{ml}, and \eqref{conD} gives us
\begin{equation}
    |1-\rho_{i,k}|\leq\frac{\kappa_H\Delta_{i,k}^2}{\kappa_{\rm fcd }h_{i,k}\Delta_{i,k}}\leq 1-\eta_2,
\end{equation}
then $\rho_{i,k}\geq\eta_2$.
\end{proof}
This result guarantees the finiteness of the recursion at iteration $(i,k)$ whenever the trust-region radius $\Delta_{i,k}$ is sufficiently small. It also implies the following useful consequence.
\begin{corollary}\label{eachonesuccess}
Each minimization sequence contains at least one successful iteration.
\end{corollary}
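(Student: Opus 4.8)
The plan is a proof by contradiction that mirrors the classical trust-region argument, with Lemma~\ref{upbound} supplying the crucial ``small radius forces a very successful step'' mechanism. I would fix a minimization sequence produced at some level $i$ by a call to Algorithm~\ref{RMNTR} from the starting point $x_{i,0}$, and assume it contains \emph{no} successful iteration, i.e.\ $\rho_{i,k}<\eta_1$ for every index $k$ occurring in the sequence. Since the statement is vacuous when the call terminates at $k=0$, I would record from the outset that the sequence performs at least one genuine iteration, so in particular $h_{i,0}>\epsilon_i^h>0$ (the initial point has not been declared approximately stationary).

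First I would show that under the no-success hypothesis the iterate is frozen and the radius collapses geometrically. Step~4 of Algorithm~\ref{RMNTR} sets $x_{i,k+1}=x_{i,k}$ whenever $\rho_{i,k}<\eta_1$, so by induction $x_{i,k}=x_{i,0}$ for all $k$; hence the stationarity measure is constant, $h_{i,k}=h_{i,0}>0$, because $h_{i,k}$ depends only on $x_{i,k}$, $\nabla\tilde L_i(x_{i,k})$, the fixed $t$, and $\phi_i$. Because $x_{i,k+1}=x_{i,0}$, the excess-radius term in \eqref{deres} equals $\Delta_{i+1}$, while the unsuccessful branch of Step~6 yields $\Delta_{i,k}^+\le\gamma_2\Delta_{i,k}$; therefore $\Delta_{i,k+1}=\min\{\Delta_{i,k}^+,\Delta_{i+1}\}\le\gamma_2\Delta_{i,k}$, so $\Delta_{i,k}\le\gamma_2^{\,k}\Delta_{i,0}\to 0$. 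I would also verify that neither exit test in Step~5 can trigger under this hypothesis: $\|x_{i,k+1}-x_{i,0}\|_i=0\not>(1-\epsilon_i^{\Delta})\Delta_{i+1}$, and $h_{i,k}=h_{i,0}>\epsilon_i^h$, so the sequence is genuinely infinite and $k$ may be taken as large as needed.

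Then I would invoke Lemma~\ref{upbound} with $\kappa_s=\kappa_{\rm fcd}(1-\eta_2)/\kappa_H$: since $\Delta_{i,k}\to 0$ while $\kappa_s h_{i,k}=\kappa_s h_{i,0}$ is a fixed positive number, some iteration $(i,k)$ satisfies $h_{i,k}>0$ and $\Delta_{i,k}\le\kappa_s h_{i,k}$, whence $\rho_{i,k}\ge\eta_2>\eta_1$ and that iteration is (very) successful --- contradicting the standing assumption. This reasoning uses only Lemma~\ref{upbound}, which is stated for a generic level $i$, so it applies verbatim to every recursive call. The point needing the most care, and the one I expect to be the main obstacle, is the bookkeeping around Step~5: one must check that the no-success hypothesis itself precludes early termination, so that the contradiction is reached \emph{within} the sequence rather than the sequence ending first; this is exactly where the frozen-iterate observation and $h_{i,0}>\epsilon_i^h$ are used. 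The rest is the routine geometric-decay estimate for the trust-region radius.
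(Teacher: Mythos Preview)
Your proposal is correct and follows essentially the same approach as the paper, which simply invokes Lemma~\ref{upbound} and defers to the classical trust-region argument of \cite[Lemma~4.6]{gst}: assume no success, observe the iterate is frozen so $h_{i,k}\equiv h_{i,0}>0$ while $\Delta_{i,k}\to 0$ geometrically, and obtain a contradiction from Lemma~\ref{upbound}. Your explicit handling of the Step~5 termination tests is a useful fleshing-out of what the paper leaves implicit by citation.
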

\begin{proof}
    Using Lemma \ref{upbound} and repeating the proof of \cite[Lemma 4.6]{gst}, we then finish the proof of Corollary \ref{eachonesuccess}.
\end{proof}

In the following lemma, we establish a constant lower bound on $\Delta_{i,k}$ under the assumption that $h_{i,k}>\epsilon>0$.

\begin{lemma}\label{lowerbound}
    Let $\epsilon>0$ be fixed and suppose $h_{i,k}>\epsilon$ for all $(i,k)$. Let $k_1$ be the index of the first successful iteration at level $i$. Then
    \begin{equation}\label{lowbound}
        \Delta_{i,k}\geq\min\{\gamma_1^{k_1}\Delta_{i,0},\gamma_1\kappa_s\epsilon,\epsilon_{i}^{\Delta}\Delta_{i+1}\}\eqqcolon\Delta_{\min}
    \end{equation}
for every iteration $(i,k)$.
\end{lemma}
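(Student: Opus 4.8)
The statement is a standard ``trust-region radius does not collapse'' lemma, and the plan is to adapt the argument of \cite[Lemma~4.7]{gst} (or the analogous step in \cite{bk}) to the present multilevel nonsmooth setting using \Cref{upbound}. The key dichotomy is between iterations whose radius is ``large'' (at least $\kappa_s\epsilon$) and iterations whose radius is ``small'' (below $\kappa_s\epsilon$). First I would observe that, by \Cref{upbound}, whenever $h_{i,k}>\epsilon$ and $\Delta_{i,k}\leq\kappa_s h_{i,k}$ — in particular whenever $\Delta_{i,k}\leq\kappa_s\epsilon$ — the iteration is very successful ($\rho_{i,k}\geq\eta_2$), so the radius update in Step~6 gives $\Delta_{i,k}^+\geq\Delta_{i,k}$. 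Hence a ``small'' radius can only have been produced from an even smaller one by a \emph{contraction}, i.e. by a factor $\gamma_1$ applied to an unsuccessful iteration; but unsuccessful iterations with a small radius are exactly those that \Cref{upbound} rules out once $\Delta_{i,k}\leq\kappa_s\epsilon$. Thus the only way to shrink below $\kappa_s\epsilon$ is from a radius that was at least $\kappa_s\epsilon$ to begin with, and a single contraction by $\gamma_1$ then cannot take it below $\gamma_1\kappa_s\epsilon$.

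The second ingredient handles the boundary effect coming from \eqref{deres}: the radius at level $i$ is also capped by $\Delta_{i+1}-\|x_{i,k+1}-x_{i,0}\|_i$. Here I would invoke Step~5, which terminates the level-$i$ sequence as soon as $\|x_{i,k+1}-x_{i,0}\|_i>(1-\epsilon_i^{\Delta})\Delta_{i+1}$; therefore, for every iterate that is \emph{not} the terminal one, $\|x_{i,k+1}-x_{i,0}\|_i\leq(1-\epsilon_i^{\Delta})\Delta_{i+1}$, so the cap in \eqref{deres} satisfies $\Delta_{i+1}-\|x_{i,k+1}-x_{i,0}\|_i\geq\epsilon_i^{\Delta}\Delta_{i+1}$ and never forces the radius below $\epsilon_i^{\Delta}\Delta_{i+1}$. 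Combining this with the contraction analysis of the previous paragraph, any radius at level $i$ is bounded below by $\min\{\gamma_1\kappa_s\epsilon,\epsilon_i^{\Delta}\Delta_{i+1}\}$ once at least one radius reduction has occurred. Finally, before the first successful iteration $k_1$, the radius can have been reduced at most $k_1$ times, each time by a factor no smaller than $\gamma_1$, starting from $\Delta_{i,0}$; this accounts for the $\gamma_1^{k_1}\Delta_{i,0}$ term and closes the induction on $k$.

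I would organize the write-up as an induction on the iteration counter $k$ at level $i$: the base case $k\leq k_1$ is the crude bound $\Delta_{i,k}\geq\gamma_1^{k}\Delta_{i,0}\geq\gamma_1^{k_1}\Delta_{i,0}$, and the inductive step splits according to whether $\Delta_{i,k}\geq\kappa_s\epsilon$ (in which case the update, even when contracting, stays at or above $\gamma_1\kappa_s\epsilon$, intersected with the \eqref{deres} cap $\geq\epsilon_i^{\Delta}\Delta_{i+1}$) or $\Delta_{i,k}<\kappa_s\epsilon$ (in which case \Cref{upbound} forces $\rho_{i,k}\geq\eta_2$, so the radius does not decrease and the induction hypothesis carries over, again intersected with the cap). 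The main obstacle — really the only subtlety beyond bookkeeping — is making the interaction with \eqref{deres} airtight: one must be careful that the $\min$ in \eqref{deres} is being taken over quantities that are themselves bounded below, which is precisely why the termination test in Step~5 is needed, and why $\epsilon_i^{\Delta}\Delta_{i+1}$ (rather than $0$) appears in $\Delta_{\min}$; everything else is a routine transcription of the smooth-case argument with $h_{i,k}$ in place of the gradient norm and $\kappa_{\mathrm{fcd}}$ from \Cref{subsolveralpha}.
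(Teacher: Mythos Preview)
Your proposal is correct and follows essentially the same approach as the paper: the paper also uses \Cref{upbound} to rule out contractions once $\Delta_{i,k}\leq\kappa_s\epsilon$, handles the pre-$k_1$ phase by the crude bound $\gamma_1^{k_1}\Delta_{i,0}$, and controls the cap in \eqref{deres} via the Step~5 termination test to extract the $\epsilon_i^{\Delta}\Delta_{i+1}$ term. The only cosmetic difference is organizational --- the paper splits first by level ($i=r$, where the cap is inactive since $\Delta_{r+1}=\infty$, versus $i<r$) and then argues iteration by iteration, whereas you fold the cap into a single induction on $k$; the substance is identical.
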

\begin{proof}
We discuss only the two-level case. The multilevel case follows then from the recursive property of Algorithm 2.

If $i=r$, then by $\Delta_{r+1}=\infty$, we have $\Delta_{r,k+1}=\Delta_{i,k+1}^{+}$ from \eqref{deres}. Hence, 
$\Delta_{i,k}\geq \gamma_1^{k_1}\Delta_{i,0}$,
for any $k\leq k_1$, and \eqref{lowbound} holds true. 
For $k>k_1$, we consider two cases: either $\Delta_{i,k_1}\geq\gamma_1\kappa_s\epsilon$ or $\Delta_{i,k_1}<\gamma_1\kappa_s\epsilon$. In the first case, if $\rho_{i,k_1}\geq\eta_2$, then \eqref{lowbound} holds for $k=k_1+1$. If $\rho_{i,k_1}<\eta_2$, then by Lemma \ref{upbound}, we have that $\Delta_{i,k_1}>\kappa_sh_{i,k_1}$. Combining this, Step 6 in Algorithm 2 and the fact that $\gamma_2\geq\gamma_1$ and $h_{i,k}>\epsilon$ gives us $\Delta_{i,k_1+1}>\gamma_2\Delta_{i,k_1}>\gamma_1\kappa_s\epsilon,$ i.e. \eqref{lowbound} holds for $k=k_1+1.$  We now consider the second case: we observe that if $\Delta_{i,k}\leq\kappa_s\epsilon$, then by Lemma \ref{upbound} and the fact that $h_{i,k}>\epsilon$, we have that $\rho_{k_1}\geq\eta_2$ and thus, $\Delta_{i,k_1+1}\geq\Delta_{i,k_1}\geq\gamma^{k_1}_1\Delta_{i,0}.$  These arguments could be repeated for all $k>k_1$, which completes the proof for $i=r$.

If $i=r-1$ (i.e. $i=0$ since we only consider two levels), note that, for any $k$, $\|x_{i,k+1}-x_{i,0}\|\leq(1-\epsilon_i^{\Delta})\Delta_{i+1}$, which implies that $\Delta_{i,k+1}=\min\{\Delta_{i,k+1}^{+},\epsilon_{i}^{\Delta}\Delta_{i+1}\}$. Then proof is done by repeating the above proof with $\Delta_{i,k+1}^{+}$.
\end{proof}
We consider the sequence of successful iterations ($\rho_{i,k}\geq \eta_1$). They are divided into two groups: $K_{s,f}$ the successful iterations at which the fine model (Taylor model) has been employed, and $K_{s,l}$ the ones at which the lower level model has been employed. Let $K_s$ be all the successful iterations, i.e. $K_s\coloneqq K_{s,f}\cup K_{s,l}$, and let $k_1$ be the index of the first successful iteration. 
\begin{theorem}\label{finiteite}
The number of iterations at each level is finite. 
\end{theorem}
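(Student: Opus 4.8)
The plan is to argue by contradiction, one level at a time, starting from the bottom level $i=0$ (which involves no recursion) and then treating the top level $i=r=1$. Fix an activation of level $i$ and suppose the inner loop of \Cref{RMNTR} performs infinitely many iterations $k=0,1,2,\dots$. Since \textbf{Step 5} returns as soon as $h_{i,k}\le\epsilon_i^h$, non-termination forces $h_{i,k}>\epsilon\coloneqq\epsilon_i^h>0$ for every $k$, and it also keeps $\|x_{i,k+1}-x_{i,0}\|_i\le(1-\epsilon_i^\Delta)\Delta_{i+1}$, so that by \eqref{deres} all iterates stay in the fixed ball $\{x:\|x-x_{i,0}\|_i\le\Delta_{i+1}\}$. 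Feeding $h_{i,k}>\epsilon$ into \Cref{lowerbound} gives a uniform lower bound $\Delta_{i,k}\ge\Delta_{\min}>0$ on the trust-region radius.

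Next I would quantify the per-iteration decrease. For every successful iteration $k\in K_s$ one has $\rho_{i,k}\ge\eta_1$, hence $L_i(x_{i,k})-L_i(x_{i,k+1})\ge\eta_1\bigl(m_{i,k}(x_{i,k})-m_{i,k}(x_{i,k}+s_{i,k})\bigr)$. The fraction-of-Cauchy-decrease estimate \eqref{taylorfcd} is available in either case --- for a Taylor step by \cite[Proposition~2]{bk2} and for a lower-level step by \Cref{subsolveralpha} --- so, using $h_{i,k}>\epsilon$, the curvature bound $\|B_{i,k}\|_i\le\kappa_H-1$ from \eqref{boundhess} (with $\|\cdot\|_i=\|\cdot\|$ since $\sigma_i=1$, cf.\ \Cref{reinorm}), and $\Delta_{i,k}\ge\Delta_{\min}$, we obtain a fixed reduction
\[
L_i(x_{i,k})-L_i(x_{i,k+1})\ \ge\ \eta_1\kappa_{\rm fcd}\,\epsilon\,\min\!\Bigl\{\epsilon/\kappa_H,\ \Delta_{\min}\Bigr\}\ =:\ \delta\ >\ 0
\]
at each successful iteration, while $L_i$ is unchanged at unsuccessful ones. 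Now $L_i$ is bounded below along the whole activation: for $i=r$ because $L_r=F$ is bounded below by Assumption \ref{passump}.3; for $i=0$ because $L_0$ is $F_0$ (bounded below by $\kappa_{lb}$) plus a fixed linear functional, which is bounded below on the ball to which the iterates are confined. Consequently there can be only finitely many successful iterations, since otherwise $L_i(x_{i,k})$ would be driven below any bound.

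Finally I would rule out an infinite tail of unsuccessful iterations: after the last successful iteration every step is unsuccessful, so \textbf{Step 6} shrinks the radius by a factor $\le\gamma_2<1$ each time, forcing $\Delta_{i,k}\to0$ and contradicting $\Delta_{i,k}\ge\Delta_{\min}>0$. Hence level $i$ terminates after finitely many iterations. For $i=0$ this is unconditional; for $i=r$ one additionally observes that, by the finiteness just proved for level $0$, every level-$r$ iteration entering \textbf{Step 2} triggers only finitely many inner iterations, so the level-$r$ loop is well posed and the same argument applies there.

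The step I expect to be the main obstacle is establishing that $L_i$ is bounded below at coarse levels: unlike $F$, the coarse model \eqref{mlow} carries an extra linear correction and need not be bounded below on all of $\mathbb{R}^{n_{i-1}}$. This is exactly circumvented by the radius cap \eqref{deres} together with the return test $\|x_{i,k+1}-x_{i,0}\|_i>(1-\epsilon_i^\Delta)\Delta_{i+1}$ in \textbf{Step 5}, which confine the coarse iterates to a fixed ball on which the linear term, and thus $L_{i-1}$, is bounded; a secondary technical point is verifying that non-termination really does yield $h_{i,k}>\epsilon$ for all $k$, which is what makes \Cref{lowerbound} applicable.
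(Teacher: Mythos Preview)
Your proposal is correct and follows essentially the same route as the paper: non-termination forces $h_{i,k}>\epsilon_i^h$, Lemma~\ref{lowerbound} then bounds the radii below, the FCD estimate (via Proposition~\ref{subsolveralpha} for recursive steps and \cite[Proposition~2]{bk2} for Taylor steps) yields a fixed decrease in $L_i$ per successful iteration, and lower boundedness of $L_i$ together with a geometric-shrinkage argument rules out infinitely many successful or unsuccessful iterations. Your treatment of the coarse case $i=0$ is in fact slightly more careful than the paper's two-level proof --- you correctly observe that $L_0=F_0+\text{(linear correction)}$ need not be globally bounded below and instead use the trust-region confinement $\|x_{0,k+1}-x_{0,0}\|_0\le(1-\epsilon_0^\Delta)\Delta_{1}$ to bound the linear term; the paper's Section~\ref{sec:globalcon} proof only invokes that $F_i$ is bounded below, and the ball argument you give is the one that actually appears in the paper's multilevel appendix (Theorem~\ref{T49}).
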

\begin{proof}
Let $k_1$ be the first successful iteration in level $i$. Then by Proposition \ref{subsolveralpha}, the bound in \eqref{boundhess}, and Lemma \ref{lowerbound}, we have that
\begin{equation}\label{Fde}
\begin{aligned}
L_{i}(x_{i,k_1})-L_{i}(x_{i,k_1+1})&\geq \eta_1 (m_{i}(x_{i,k_1})-m_{i}(x_{i,k_1+1}))\\
&\geq \eta_1\kappa_{\rm fcd } h_{i,k_1}\min\left\{\frac{h_{i,k_1}}{1+\|B_{i,k_1}\|},\Delta_{i,k_1}\right\}\\
&\geq\eta_1\kappa_{\rm fcd }\epsilon_{\min}^h\min\left\{\frac{\epsilon_{\min}^h}{\kappa_H},\Delta_{\min}\right\},
\end{aligned}
\end{equation}
where $\epsilon_{\min}^h=\min_{i\in\{0,\,\dots,\,r\}}\epsilon_{i}^h$.
From Corollary \ref{eachonesuccess}, we know that there exists at least one successful iteration for every minimization sequence. Summing the objective decrease at level $i$ we obtain from \eqref{Fde} that, for any iteration $(i,\ell+1)$,
\begin{equation}\label{taubound}
    L_i(x_{i,0})-L_i(x_{i,\ell+1})=\sum_{j\in\{0,\,\dots,\,\ell\}\cap K_{s}}\left[L_i(x_{i,j})-L_i(x_{i,j+1})\right]\geq \tau_{i,\ell}\eta_1\kappa_h,
\end{equation}
where $\tau_{i,\ell}$ denotes the total number of successful iterations in level $i$ until iteration $\ell$, and $\kappa_h\coloneqq\kappa_{\rm fcd }\epsilon_{\min}^h\min\left\{{\epsilon_{\min}^h}{\kappa_H^{-1}},\Delta_{\min}\right\}\in(0,1)$.
From the construction of the lower-level objective function (as in \Cref{sec:rmta}) and since $f_i$, $\phi_i$, and $F_i=f_i+\phi_i$ satisfy Assumption \ref{passump} for each $i$, $F_i$ is bounded below. Then,  \eqref{taubound} implies that $\tau_{i,\ell}$ must be finite. On the other hand, Lemma \ref{lowerbound} implies that the minimization sequence is finite, as otherwise $\Delta_{i,k}$ will converge to zero, which is impossible by Lemma \ref{lowerbound}. Thus, the total number of iterations at each level is finite.
\end{proof}
The global convergence property is a direct consequence of Theorem \ref{finiteite}.
\begin{corollary}
 Assume that Algorithm RMNTR is called at the uppermost level with $\epsilon_r^h=0$. Then 
\begin{equation}\label{gc}
   \liminf_{k\to\infty}h_{r,k}=0.  
\end{equation}  
\end{corollary}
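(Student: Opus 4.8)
The plan is to obtain the statement from Theorem \ref{finiteite}, supplemented by Lemma \ref{upbound}, Lemma \ref{lowerbound} and the lower boundedness in Assumption \ref{passump}, splitting the argument according to whether the top-level minimization sequence is finite.

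Since the algorithm keeps $\epsilon_i^h\in(0,1)$ for every $i<r$, Theorem \ref{finiteite} applies at all levels below $r$, so each recursive call RMNTR$(i-1,\dots)$ terminates after finitely many iterations; in particular every top-level iteration $(r,k)$ is well defined and yields a finite $h_{r,k}$. If the top-level loop stops after finitely many iterations, then by Step~5 of \Cref{RMNTR} this can only be because $h_{r,k}\le\epsilon_r^h=0$, as the alternative stopping test $\|x_{r,k+1}-x_{r,0}\|_r>(1-\epsilon_r^{\Delta})\Delta_{r+1}$ can never trigger at level $r$ where $\Delta_{r+1}=\infty$; hence $h_{r,k}=0$ at that iterate and \eqref{gc} holds trivially.

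Assume now that the top-level sequence $\{x_{r,k}\}$ is infinite and, for contradiction, that $\liminf_{k\to\infty}h_{r,k}=c>0$. Put $\epsilon:=c/2$ and pick $K$ with $h_{r,k}>\epsilon$ for all $k\ge K$. Re-indexing the iterations from $K$ on (so $x_{r,K}$ plays the role of the initial point and $\Delta_{r,K}$ that of the initial radius, and noting that in \eqref{lowbound} the term $\epsilon_r^{\Delta}\Delta_{r+1}$ equals $+\infty$ at level $r$), Lemma \ref{upbound} rules out an infinite run of unsuccessful iterations — once $\Delta_{r,k}\le\kappa_s\epsilon<\kappa_s h_{r,k}$ the iteration is very successful and $\Delta$ stops shrinking — so there are infinitely many successful iterations with index $\ge K$, and Lemma \ref{lowerbound} supplies a uniform bound $\Delta_{r,k}\ge\Delta_{\min}>0$ for all $k\ge K$.

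Finally, for each successful iteration $k\ge K$, whether the step was computed from the Taylor model \eqref{taylor} (via \cite[Proposition~2]{bk2}) or from the coarse model \eqref{mlow} (via Proposition \ref{subsolveralpha}), the fraction-of-Cauchy-decrease estimate \eqref{taylorfcd} together with $\rho_{r,k}\ge\eta_1$, $L_r=F_r=F$, and \eqref{boundhess} gives
\[
F(x_{r,k})-F(x_{r,k+1})\ \ge\ \eta_1\kappa_{\rm fcd}\,h_{r,k}\min\!\Big\{\frac{h_{r,k}}{1+\|B_{r,k}\|},\,\Delta_{r,k}\Big\}\ \ge\ \eta_1\kappa_{\rm fcd}\,\epsilon\,\min\!\Big\{\frac{\epsilon}{\kappa_H},\,\Delta_{\min}\Big\}\ =:\ \delta>0 ,
\]
while $F$ stays constant on unsuccessful iterations. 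Summing over the infinitely many successful iterations $k\ge K$ forces $F(x_{r,k})\to-\infty$, contradicting $F\ge\kappa_{lb}$ from Assumption \ref{passump}; hence $\liminf_{k\to\infty}h_{r,k}=0$. The main (if modest) obstacle I anticipate is the bookkeeping needed to invoke Lemma \ref{lowerbound} from a tail index $K$ rather than from $0$, and to confirm that an infinite top-level sequence with $\epsilon_r^h=0$ indeed contains infinitely many successful iterations; both reduce to re-reading the proofs of Lemmas \ref{upbound} and \ref{lowerbound} and checking that the index shift merely replaces the constant $\gamma_1^{k_1}\Delta_{r,0}$ by $\gamma_1^{\,k_1-K}\Delta_{r,K}$.
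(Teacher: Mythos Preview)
Your argument is correct and coincides with the paper's approach: the paper's proof simply reads ``identical to that of \cite[Corollary~4.11]{gst}, using Theorem~\ref{finiteite} and replacing $\|g_{i,k}\|$ by $h_{i,k}$,'' and what you have written is precisely that argument spelled out --- assume $\liminf_k h_{r,k}>0$, invoke the radius lower bound (Lemma~\ref{lowerbound}) and the per-step FCD decrease to contradict the lower boundedness of $F$, after noting that lower-level recursions remain finite and that the only top-level termination test at level $r$ is $h_{r,k}\le\epsilon_r^h=0$. Your caveat about re-indexing from $K$ is handled exactly as you indicate: in the proof of Lemma~\ref{lowerbound} for $i=r$ only the values $h_{r,k}$ at level $r$ enter, so the shift merely replaces $\gamma_1^{k_1}\Delta_{r,0}$ by the corresponding tail quantity.
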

\begin{proof}
    The proof is identical to that of \cite[Corollary 4.11]{gst}, using Theorem~\ref{finiteite} and replacing $\|g_{i,k}\|$ by $h_{i,k}$.
\end{proof}

\section{Numerical results}\label{sec:numerics}
We now report on the practical performance of our method on two problem classes, PDE-constrained optimization and scientific machine learning, respectively.
All experiments were conducted on a MacBook Pro equipped with an Apple M2 Pro processor and 16~GB of RAM.
The implementation, together with all scripts necessary to reproduce the numerical results, is openly available at the public GitHub repository\footnote{\url{https://github.com/qiwang7777/Multilevel_LM.git}}.

We use the following algorithmic parameters for all the examples: $\Delta_0=50$, $\eta_1 = 0.05$, $\eta_2 = 0.95$, $\gamma_1=\gamma_2=0.25$, $\gamma_3=2$, $\kappa_{\rm stop}=0.6$, and $\epsilon_{i-1}=0.1$ when $i>1$. We stop Algorithm \ref{RMNTR} if $h_{r,k}\leq 10^{-7}$.
\subsection{Optimal control of Burger's equation}

Our first example is the optimal control of Burger's equation 
similar to \cite{bk2}:
\begin{equation}\label{opB}
    \min_{z\in L^2(\Omega)}\frac{1}{2}\int_{\Omega}([S(z)](x)-u_d(x))^2\,dx+\frac{\alpha}{2}\int_{\Omega}z(x)^2\,dx+\beta\int_{\Omega}|z(x)|\,dx
\end{equation}
where $\Omega=(0,1)$ is the physical domain, $\alpha=10^{-4}$ and $\beta=10^{-2}$ are fixed parameters (control costs). We choose $u_d(x)=-x^2$ as the target state. To emulate noisy measurement data, we perturb the exact target state 
\(u_d(x) = -x^2\) by adding a structured noise field consisting of three
components: (i) a dominant piecewise-constant ``step'' noise with several
randomly located jumps (maximum amplitude \(5\times 10^{-2}\)); 
(ii) an additional block noise generated from randomly placed constant 
intervals of comparable magnitude; and (iii) a sparse salt--and--pepper 
component with density \(0.5\%\) and spike amplitude \(0.2\). 
The noise is applied only at interior grid nodes so that the Dirichlet 
boundary values remain unperturbed.
Furthermore, we set $S(z)=u\in H^1(\Omega)$, where $u$ solves the weak form of Burgers' equation
\begin{equation}\label{B1}
    \begin{aligned}
    -\nu u''+uu'=z+g\quad\text{in $\Omega$},\\
    u(0)=0,\quad u(1)=-1,
    \end{aligned}
\end{equation}
with $g(x)=2(\nu+x^3)$ and $\nu=0.08$. We discretize the state $u$ using globally continuous piecewise linear finite elements and the control $z$ using piecewise constants on a uniform mesh with $n$ sub-intervals. 
Here, we simply choose the 1D averaging restriction operator $R_i^{i-1}$ as follows:
$$R_{i}^{i-1}=\frac{1}{\sqrt{2}}\begin{bmatrix}
1 & 1 & \quad &\quad&\quad\\
\quad & \quad & 1 & 1&\quad\\
\quad &\quad &\quad &\quad &\ddots
\end{bmatrix}\in \rr^{n_{i-1}\times n_{i}},$$
where $n_r=n.$
\begin{figure}[!htb]
    \centering
    %remove white space with trim={left bottom right top}
    \includegraphics[trim={7.5cm 0 7.5cm 0},angle=270,width=\linewidth]{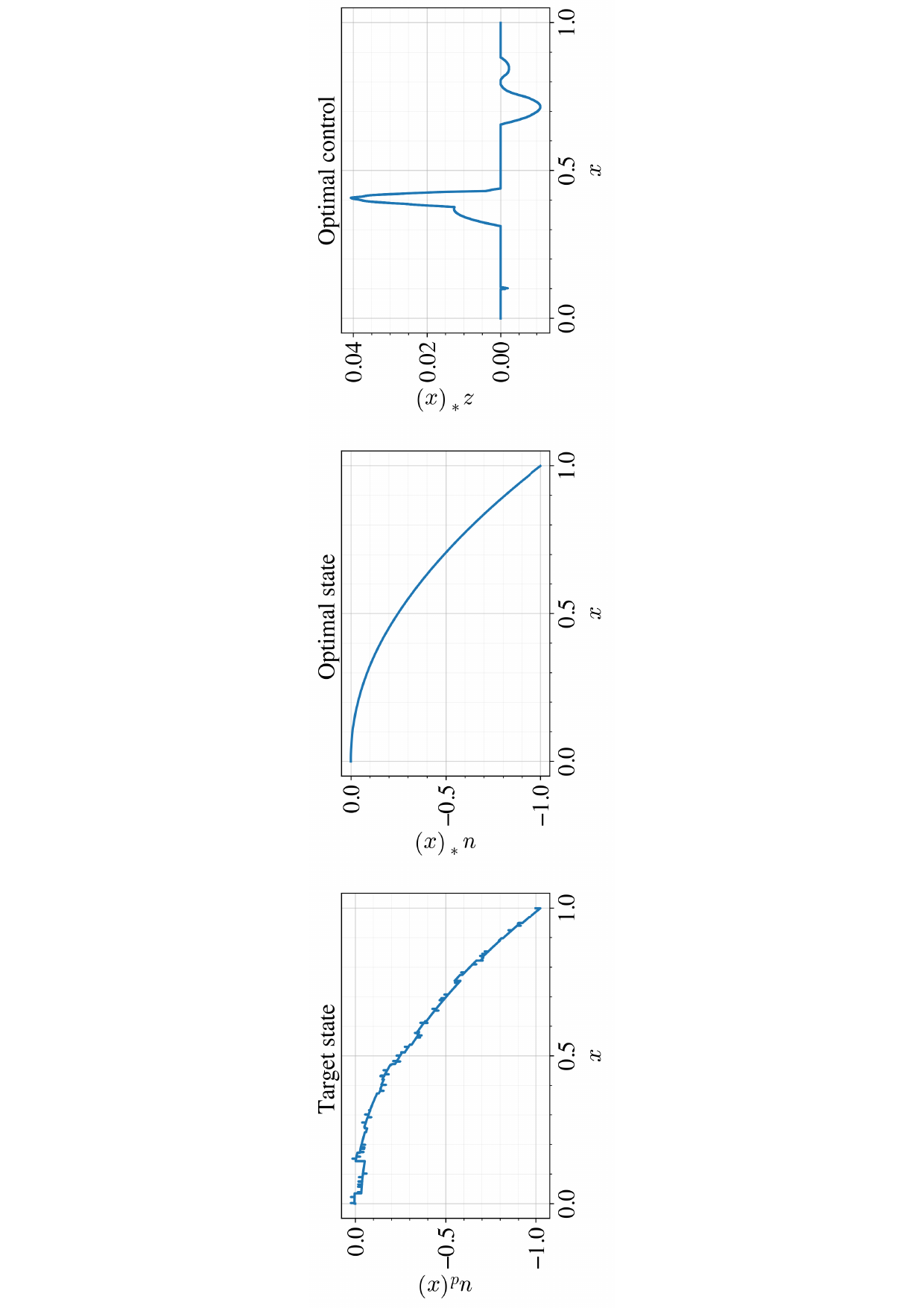}
    \caption{Optimal Control for Burgers with noise, target state with step (0.05), block (0.005), and impulse (0.2) noise}
    \label{fig:Optimal Control for Burgers}
\end{figure}

\begin{figure}[!htb]
    \centering
    \includegraphics[trim={6cm 0 6cm 0},angle=270,width=\linewidth]{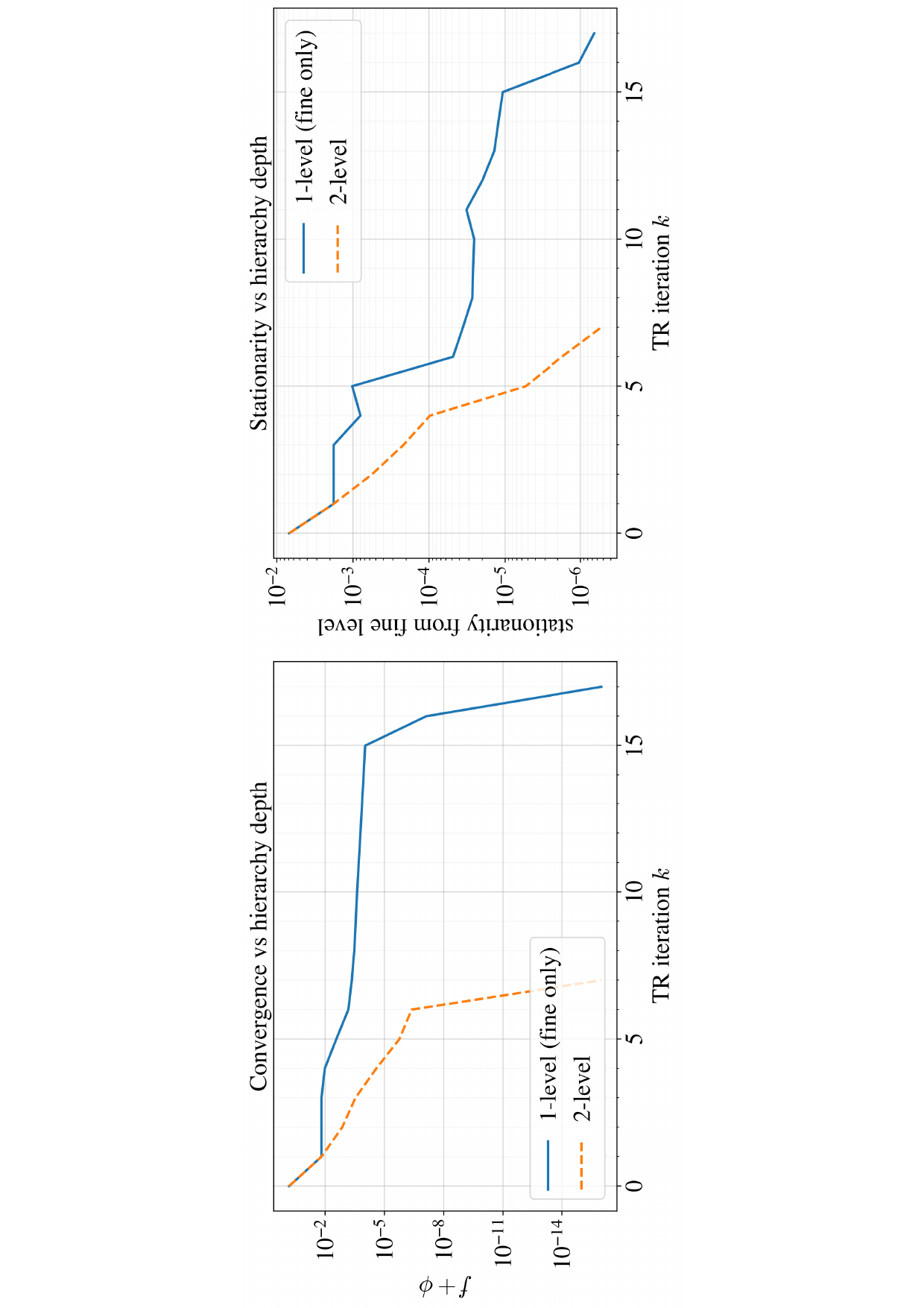}
    \caption{Convergence comparison with $n=8192$}
    \label{fig:com_burgers}
\end{figure}

Figure \ref{fig:Optimal Control for Burgers} shows the noisy target state $u_d$, the optimal state $u$, and the optimal control $z$. The target data is corrupted and nonsmooth, which creates local irregularities that the control must balance against the PDE and $L^2$- and $L^1$-regularization. The central plot of \Cref{fig:Optimal Control for Burgers} depicts an optimal state that is much smoother than the noisy target. The optimal control is relatively sparse, with localized spikes and oscillations. 
% The control works to shape the solution while still keeping the regularization cost low.

The left plot of Figure \ref{fig:com_burgers} compares the convergence behavior of the 1-level and the 2-level methods. It is obvious that the 2-level method converges significantly faster, which implies that coarse levels provide inexpensive but globally informative corrections that reduce the number of fine-level iterations. The right plot of Figure \ref{fig:com_burgers} compares the stationarity measure (i.e., a first‐order optimality indicator, $h_{r,k}$) for the 1-level and 2-level strategies. 
The 2-level run reduces the stationarity residual quickly, reaching very small $h_{k,r}$ after only a few iterations.
In contrast, the 1-level method reaches similar values after double the number of iterations and with more oscillation. 
Thus, the 2-level method not only reduces the objective faster but also drives the solution toward optimality conditions more rapidly and with smoother progress.

\subsection{Semilinear optimal control}
Our second example is the optimal control of a semilinear elliptic PDE, given by
\begin{equation}
\begin{aligned}
    \min_{z\in L^2(\Omega)}&\;\:\frac{1}{2}\int_{\Omega}([S(z)](x)-w(x))^2\,dx+\frac{\alpha}{2}\int_{\Omega}z(x)^2\,dx+\beta \int_{\Omega}|z(x)|\,dx\\
    \text{subject to} & \;\: -25\leq z\leq 25\quad\text{almost everywhere (a.e.),}
    \end{aligned}
\end{equation}
where $\Omega=(0,1)^2$ is the physical domain, and for the $L^2$-control cost we choose $\alpha=10^{-4}$. Concerning the $L^1$-cost we test $\beta = 0.05$ as well as $\beta=0.01$. We note that it is well known that the larger $\beta$ gets, the smaller becomes the support set for the optimal control. Further, $w\equiv-1$ is the target state, and $u= S(z)\in H^1(\Omega)$ solves the weak form of the semilinear elliptic PDE
\begin{equation}\label{semilinearpde}
-\Delta u +u^3 = z \quad\text{in $\Omega$}, \quad u=0 \quad\text{on $\partial\Omega$}.
\end{equation}
We discretize the state $u$ using piecewise linear finite elements on a uniform triangular mesh with $2n^2$ elements and the control variable $z$ using piecewise constants on the same mesh, resulting in $2n^2$ degrees of freedom. 
Here, we apply the restriction operator $R_{i}^{i-1}=R_{2D}=R_{1D}\otimes R_{1D}$, where $R_{1D}$ performs local averaging over adjacent grid points in one dimension. Specifically, each coarse-grid point corresponds to the average of 
$m$ consecutive fine-grid points, i.e.,
\[
R_{1D}[i,j] = 
\begin{cases}
\frac{1}{\sqrt{m}}, & \text{if } j \in \{i \cdot m,\, i \cdot m + 1,\, \dots,\, (i+1) \cdot m - 1\}, \\
0, & \text{otherwise},
\end{cases}
\]
for $i = 0, \dots, n_{i-1}-1$ and and $m=\frac{n_{i-1}}{n_i}$ being the coarsening ratio.

\begin{figure}[!htb]
    \centering
    \includegraphics[trim={6cm 0 6cm 0},angle=270,width=\linewidth]{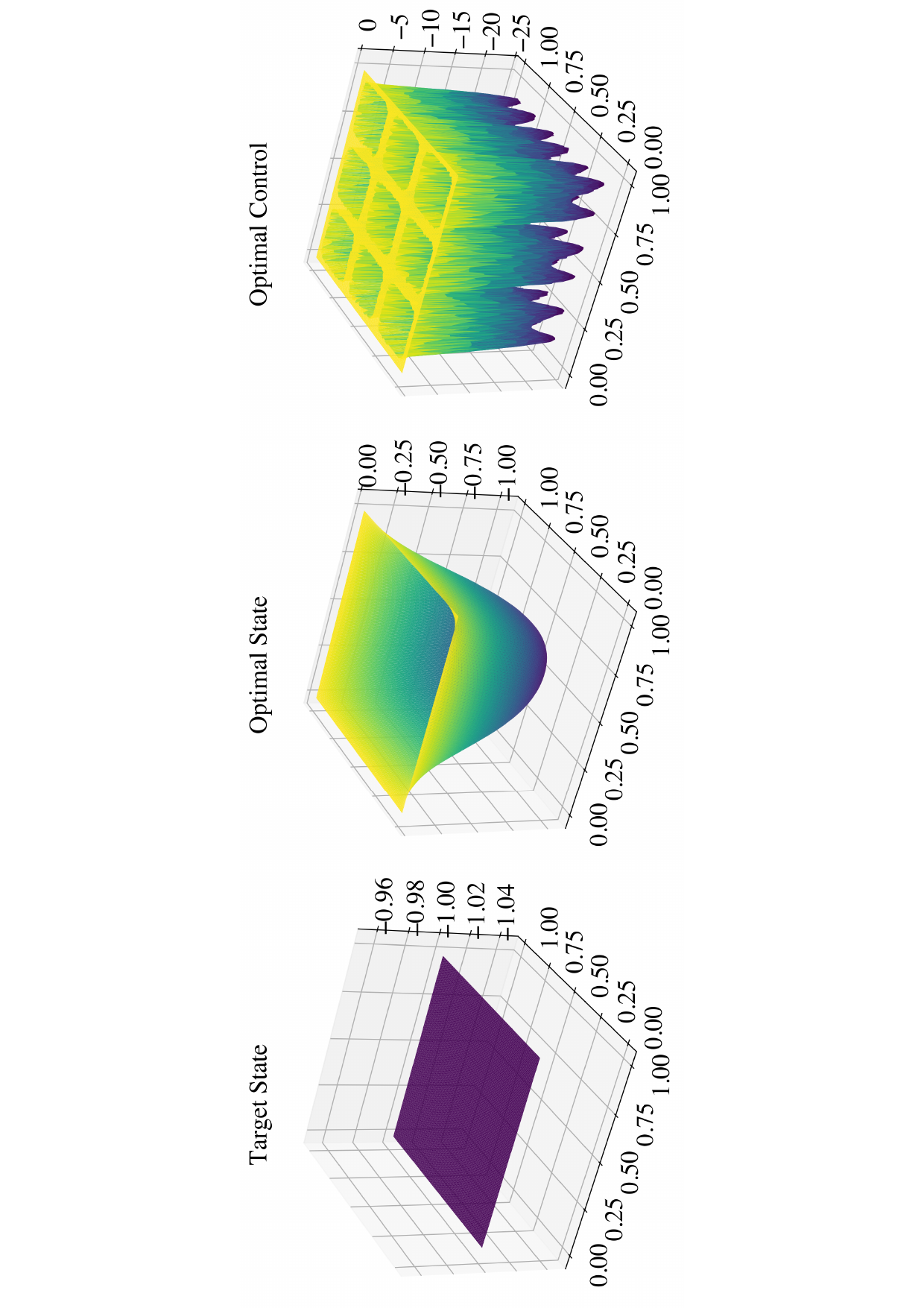}
    \caption{Optimal control for semilinear PDE with $n=128$, $\alpha=10^{-4}$ and $\beta=0.01$}
    \label{s1}
\end{figure}

\begin{figure}[!htb]
    \centering
    \includegraphics[trim={6cm 0 6cm 0},angle=270,width=\linewidth]{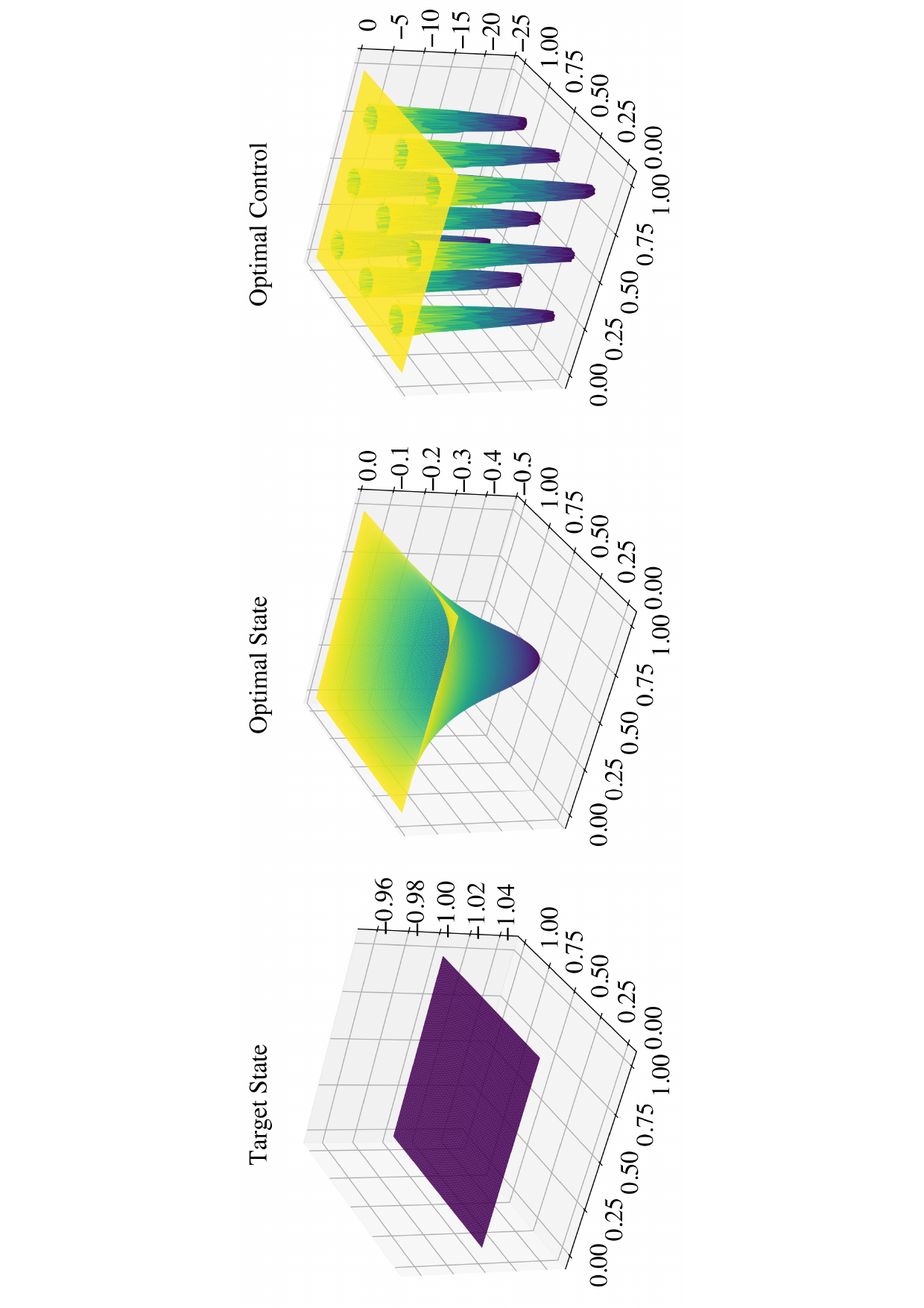}
    \caption{Optimal control for semilinear PDE with $n=128$, $\alpha=10^{-4}$ and $\beta=0.05$}
    \label{s2}
\end{figure}
We also consider the target function $w=w^{*}+\delta$, where $w^{*}\equiv -1$ and $\delta$ is a (discrete) realization of zero-mean Gaussian noise of standard deviation $\widehat{\sigma}$. We conducted experiments for various values of $\widehat{\sigma}$. However, since the numerical results are qualitatively similar across all tested noise levels, we report here only the case $\widehat{\sigma}=0.5$.
\begin{figure}[!htb]
    \centering
    \includegraphics[trim={6cm 0 6cm 0},angle=270,width=\linewidth]{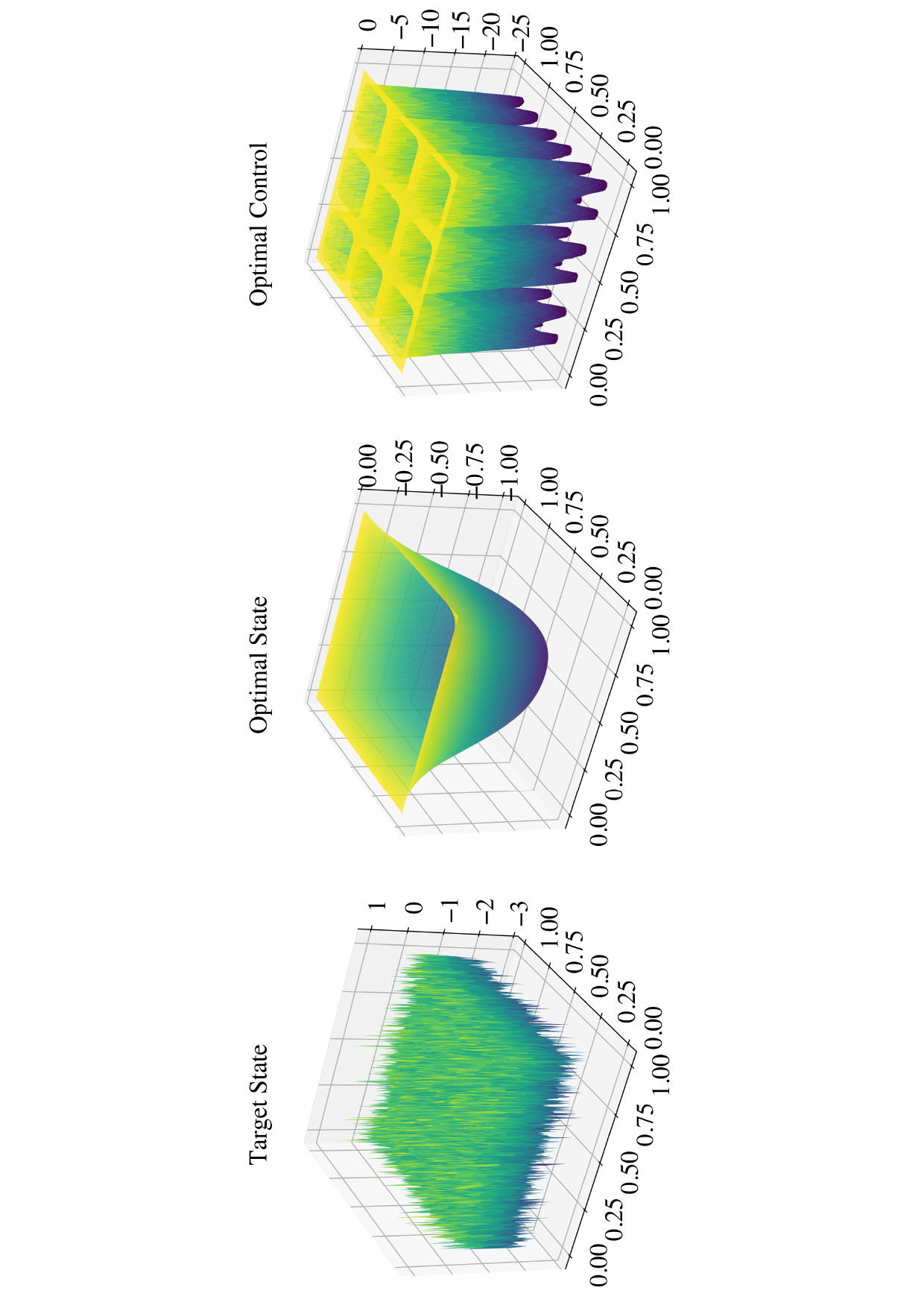}
    \caption{Optimal control for semilinear PDE with $n=256$, $\alpha=10^{-4}$, $\beta=0.01$, and $\widehat{\sigma}=0.5$}
    \label{snoise}
\end{figure}
\begin{figure}[!htb]
    \centering
    \includegraphics[trim={6cm 0 6cm 0},angle=270,width=\linewidth]{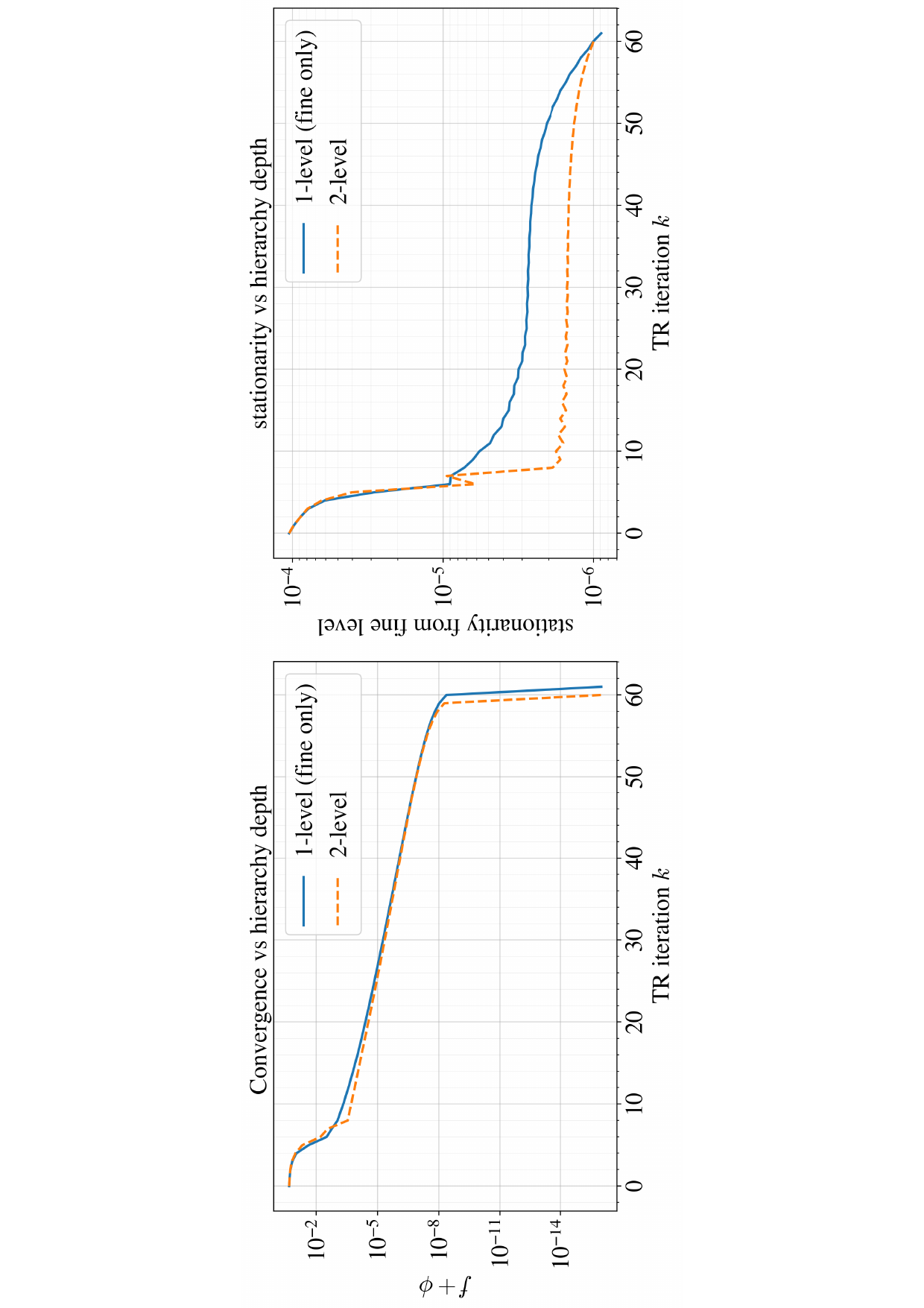}
    \caption{Convergence comparison with $n=256$}
    \label{fig:seminilinear_compare}
\end{figure}

Figures \ref{s1}, \ref{s2}, and \ref{snoise} all illustrate optimal control for the semilinear elliptic PDE constraint under different parameter settings and noise levels. As of course expected, the PDE constraint smooths the state, producing physically meaningful optimal states regardless of the noise level. Further, the $L^1$-regularization creates structured, sparse controls, which remain stable even with severe noise as in Figure \ref{snoise}. Comparing Figure \ref{s1} and \ref{s2}, we see that if we increase $\beta$, then it will sparsify the optimal control.

Figure \ref{fig:seminilinear_compare} presents the performance comparison between the 1-level method and the 2-level method for solving the semilinear PDE-constrained optimal control problem with a fine grid size $n=256$. The left subplot shows that the 2-level method accelerates convergence and reduces the computational effort (since the iteration count is approximately the same, but the 2-level method involves lower dimensional subproblems). The right subplot shows that the 2-level method reduces the stationarity residual much faster when compared to the 1-level method. 
\subsection{Neural network training}
For some given input $\omega\in\mathbb{R}^{n_0}$, a (artificial) neural network (NN) with $L$ hidden layers can be written as
$$\mathcal{N}(\omega)=(T_{L+1}\circ\sigma\circ T_{L}\circ\cdots\circ\sigma\circ T_2\circ\sigma\circ T_1)(\omega),$$
where, for every $l= 1,...,L+1$, the hidden layers consist of weight matrices $W_l\in\mathbb{R}^{n_l\times n_{l-1}}$ and bias vectors $b_l\in\mathbb{R}^{n_l}$, and an activation
function $\sigma:\mathbb{R}\to\mathbb{R}$, which is applied componentwise to vectors. Further, $T_l$ denotes the affine transformation $z\rightarrow W_lz +b_l$ from layer $l-1$ to layer $l$.
Rather than discretizing a PDE solution by a finite element ansatz as in the previous examples, we now parametrize the unknown solution $u$ by a neural network and aim to learn the parameters $\theta:=(W_{L+1}, b_{L+1},\cdots,W_1,b_1)$ from measurements respectively evaluations of the PDE data at sampling (i.e., collocation) points $\{\omega_i\}_{i=1}^M$. Below we also write $\mathcal{N}(\theta,\omega)$ in order to emphasize the dependence of $\mathcal{N}$ on $\theta$.This approach has become popular recently in the context of so-called PINNs, {\it physics-informed neural networks}, and is in particular interesting when, e.g., the geometry of the underlying PDE-domain is complicated, which challenges finite element discretizations, or the dimension (of the PDE-domain or the image space of $u$) is high. Here, we primarily focus on validating our theory and, hence, leave such challenging cases for future research.

Specifically, our goal here is to apply our multilevel solver to a PINN example with an $L^1$-penalty in the associated objective in order to favor sparsity in $\theta$ and, hence, of the NN. We note that the lower level problems in our solver are then connected to coarsenings of the underlying NN. For this purpose we consider the following second-order linear elliptic PDE in divergence form:
%to train neural networks (NNs) to learn the solution $u$ from $\kappa$, $f$, and the mesh point coordinates $(x,y)$. %So, the dimension of the input is $3\times 32\times 32$ and the dimension of the output is $32\times32$. 
%We consider the Poisson equation
\begin{equation}\label{poisson}
    -\nabla\cdot(\kappa\nabla u)=g\quad\text{in}\quad \Omega:=(0,1)^2,\quad \text{and }u=0\text{ on }\partial\Omega,
\end{equation}
where $\partial \Omega$ denotes the boundary of $\overline{\Omega} (=\Omega^\circ \cup\partial\Omega)$ and the coefficient function is
%with a $32\times 32$ uniform mesh and $u=0$ on the boundary, where
\begin{equation*}
\kappa(x,y) = 1.1+0.2\sin(2\pi x)\cos(2\pi y),
\end{equation*}
for $(x,y)^\top\in\Omega$. We choose the exact reference solution to be
$$u^{*}(x,y) = x(1-x)y(1-y)[1+0.25\sin(2\pi x)\sin(2\pi y)+0.1xy].$$
Note that this choice and the form of the underlying PDE imply a specific $g$. 

In our multilevel PINN solver, we then consider $\kappa$ and $g$ as given and seek to compute an approximation of $u^*$ by solving the minimization problem
\begin{equation}
    \min_{\theta\in\rn}\frac{1}{2|\Omega^{\circ}|}\sum_{\omega\in\Omega^{\circ}}\|-\nabla\cdot(\kappa\nabla \mathcal{N})(\theta,\omega)-g(x)\|_{L^2}^2+\frac{1}{2|\partial\Omega|}\sum_{\theta\in\partial\Omega}\|\mathcal{N}(\theta,\omega)\|_{L^2}^2+\beta\|\theta\|_{L^1}
\end{equation}
where $\theta$ is the collection of NN parameters, and $\beta>0$ is a given weight. In our tests, we consider a uniform $32\times 32$ grid in $\overline{\Omega}$, where $|\Omega^\circ|$ is the number of interior grid points and $|\partial\Omega|$ is the number of boundary grid points. These grid points also represent the collection of collocation points $\{\omega_i\}_{i=1}^M$, with $M=32^2$.
Our chosen neural network architecture is a fully connected one with one hidden layer consisting of 60 neurons and with a standard sigmoidal activation function.

\begin{figure}[H]
    \centering
    \includegraphics[trim={6cm 0 6cm 0},angle=270,width=\linewidth]{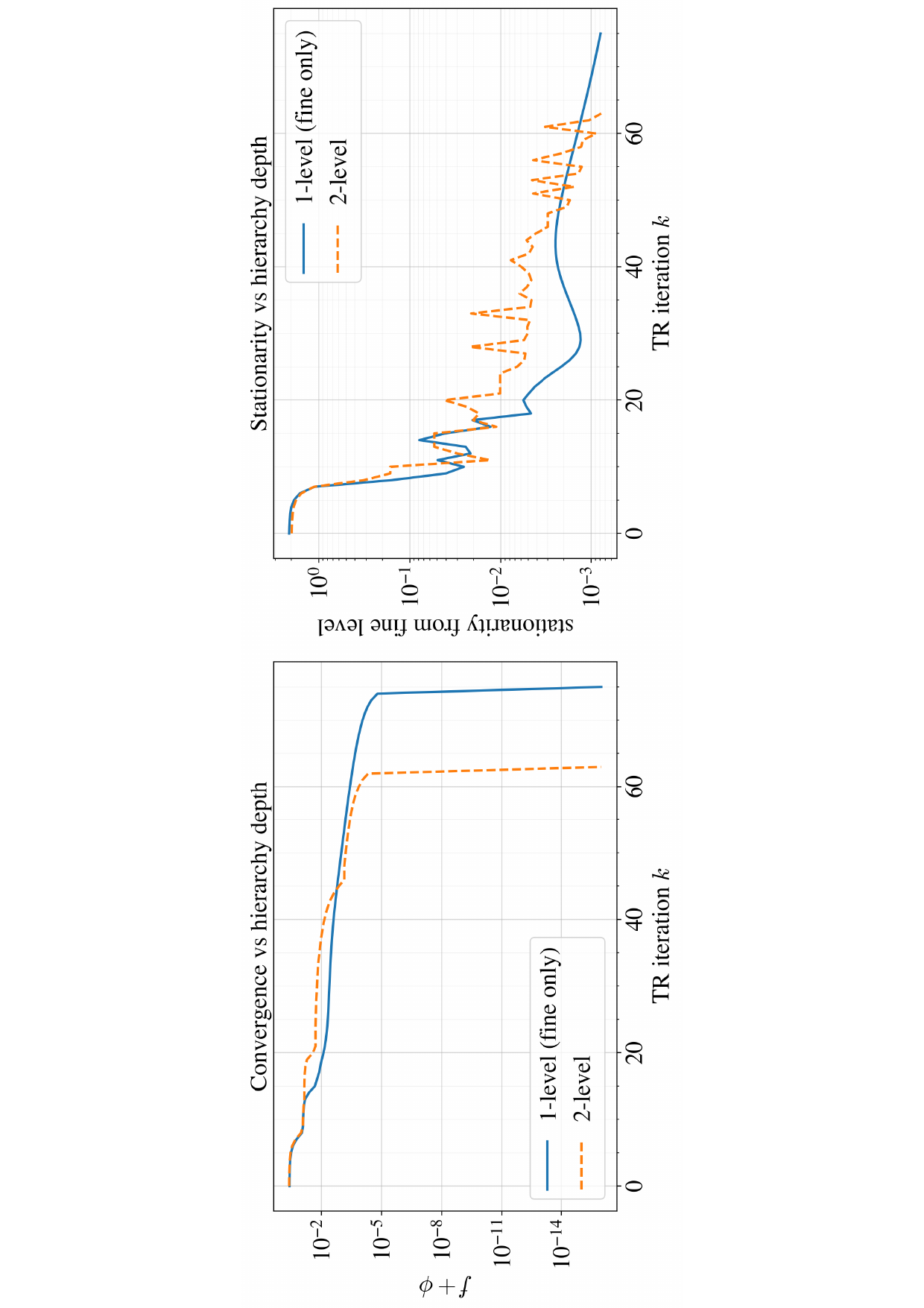}
    \caption{Convergence comparison between different level for PINNs example with $n=240$. }
    \label{fig:pinn_con_com}
\end{figure}

Figure \ref{fig:pinn_con_com} compares the convergence behavior of the 1-level method and the 2-level method when training a PINN for \eqref{poisson}. It shows that the 2-level method converges dramatically faster in terms of the loss function. It also reaches near-stationarity earlier though with more oscillation due to aggressive coarse-grid corrections.

\subsection{Performance summary}
In Table \ref{tab:performance} we summarize the performance in the tests reported above. We record the number of trust-region iterations (\texttt{iter}), the number of $f$ (\texttt{fval}) and $\nabla f$ (\texttt{grad}) evaluations, the number of $\nabla^2 f$ applications (\texttt{hess}), the number of $\phi$ evaluations (\texttt{phi}), the number of proximity operator evaluations (\texttt{prox}), and the wallclock time in seconds (\texttt{time} (s)).
\begin{table}[H]
\centering
\caption{Algorithmic performance (for all examples). 
Here,  \texttt{iter} = the number of trust-region iterations, 
\texttt{f/grad/hess} are the numbers of $f, \nabla f, \nabla^2 f$ evaluations, 
\texttt{phi/prox} are the numbers of $\phi$, proximity-operator evaluations, 
and \texttt{time} is the wall-clock time in seconds.}
\label{tab:performance}
\begin{tabular}{llcccccccc}
\toprule
Example & DoF & levels & {\tt iter} & {\tt fval} & {\tt grad} & {\tt hess} & {\tt phi} & {\tt prox} & {\tt time} \\
\midrule
Burgers &8,192& 1 & 24 & 25 & 21 & 550 & 379   & 310  & 11.6 \\
Burgers &8,192& 2 & 4 & 7 & 6 & 218 & 283  & 216  & 9.1 \\
Burgers & 8,192&3 & 4 & 7 & 6 & 250 & 339   & 286   & 10.5 \\
Semilinear &32,768  & 1 & 8 & 9 & 9 & 136 & 109 & 80 & 21.7 \\
Semilinear&32,768 & 2 & 7 & 12 & 10 & 98 & 87   & 78   & 18.6 \\
Semilinear& 131,072 & 1 & 61 & 62 & 62 & 366 & 393 & 301 & 204.6 \\
Semilinear& 131,072 & 2 & 60 & 71 & 66 & 347 & 402 & 368 & 158.4 \\
NNs & 240 & 1 & 372 & 373 & 366 & 13756 & 19982  &14235 & 122.9\\    
NNs & 240 & 2 & 168& 187  &  165  & 10139 & 14991 &  10541 & 92.2\\
\bottomrule
\end{tabular}
\end{table}
Table \ref{tab:performance} reports computional efficiency for different examples. Across all examples, using the multilevel method leads to shorter runtime compared to single level, which implies that the multilevel strategy accelerates convergence significantly and consistently. Moreover, compared to the single level iteration, the multilevel method requires fewer trust-region iterations. Also, the numbers of gradient and Hessian evaluations drop sharply with more levels. Therefore, multilevel strategies reduce costly second-order and proximal operations, improving efficiency.
\section{Conclusion}
In this work, we developed a recursive trust-region algorithm for minimizing the sum of a smooth nonconvex function and a nonsmooth convex function in $\rn$. Our algorithm employs the proximal gradient step as a generalization of the Cauchy point, which allows us to prove the existence of a trial step and ultimately global convergence of the trust-region algorithm. The numerical experiments confirm that the proposed multilevel proximal trust-region (RMNTR) method is highly efficient and robust across various nonsmooth optimization problems, including PDE-constrained optimal control and physics-informed neural network (PINN) training. In the Burgers’ equation control problem, the multilevel algorithm produced results of comparable accuracy while achieving a clear computational advantage over the single-level approach. For the semilinear elliptic optimal control problem, the method maintained reliable convergence on increasingly fine meshes, demonstrating strong scalability and reduced overall computational effort. In the PINN example, the RMNTR method required far fewer training iterations and significantly less runtime, while achieving the same loss accuracy and stationarity as the baseline. Overall, these results indicate that incorporating a multilevel hierarchy into the proximal trust-region framework brings substantial performance benefits, effectively accelerating convergence without compromising solution quality, and making it a powerful approach for large-scale nonsmooth optimization in scientific machine learning and PDE-constrained applications.

Future work may focus on exploring alternative proximal or quasi-Newton-based subsolvers within the multilevel trust-region framework, which could further improve efficiency and robustness, especially for large-scale nonsmooth problems. Moreover, extending the approach to more complex and higher-dimensional PINN architectures—such as deep or adaptive networks with intricate physical constraints—could provide additional insights into the scalability and flexibility of the RMNTR method in modern scientific machine learning contexts. For PINNs, we also highlight \cite{baraldi2025proxstorm} as a relevant direction for future research, particularly in exploring stochastic data sampling strategies within multilevel frameworks.

\section*{Acknowledgements}
MH and QW gratefully acknowledge support from the DFG Excellence Cluster MATH+ (EXC 2046) through the project AA5-2 ``Robust Multilevel Training of Artificial Neural Networks.'' MH also acknowledges the ``Ettore Majorana Foundation and Center for Scientific Culture'' in Erice, Sicily, where part of this research was performed.

\section*{Funding}

Funded by the Deutsche Forschungsgemeinschaft (DFG, German Research Foundation) under Germany's Excellence Strategy – The Berlin Mathematics Research Center MATH+ (EXC-2046/1, project ID: 390685689).

This article has been co-authored by an employee (RB) of National Technology \& Engineering
Solutions of Sandia, LLC under Contract No. DE-NA0003525 with the U.S. Department of Energy (DOE). 
The employee owns all right, title and interest in and to
the article and is solely responsible for its contents. The United States Government
retains and the publisher, by accepting the article for publication, acknowledges that
the United States Government retains a non-exclusive, paid-up, irrevocable, world-
wide license to publish or reproduce the published form of this article or allow others
to do so, for United States Government purposes. The DOE will provide public access
to these results of federally sponsored research in accordance with the DOE Public
Access Plan: \texttt{https://www.energy.gov/downloads/doe-public-access-plan}
This paper describes objective technical results and analysis. Any subjective views or
opinions that might be expressed in the paper do not necessarily represent the views
of the U.S. Department of Energy or the United States Government.
This work was supported by the Sandia Laboratory Directed Research and
Development Program. 

\bibliographystyle{siam}  
\bibliography{interacttfssample}      
\appendix

\section{Additional proofs for multiple levels} \label{appendix}

So far our proofs cover mainly the two-level case, only. 
Now, we generalize to multiple levels in the same manner as \cite{gst}. 
In fact, many of the proofs follow quite closely, with the necessary substitutions to accommodate the nonsmooth term $\phi$. 
Here, we follow the notations from~\cite{gst} and refer the reader to that reference for further details.
\begin{itemize}
    \item [1.] If iteration $(i,k)$ is recursive, then this iteration initiates a \emph{minimization sequence} at level $i-1$.
    This sequence consists of all successive iterations at this level starting from $x_{i-1,0}=R_i^{i-1} x_{i, k}$
    until a return is made to level $i$ within iteration $(i, k)$. 
    If such a return occurs, then the iteration $(i, k)$ is the \emph{predecessor} of level $i-1$'s minimization sequence. 
    We write $(i, k)=\pi(i-1, \ell)$ to signify that $(i-1, \ell)$ belongs to this minimization sequence.
    \item [2.]  For some iteration $(i, k)$, the set
$$
\mathcal{R}(i, k) \coloneqq\{(j, \ell) \mid \text { iteration }(j, \ell) \text { occurs within iteration }(i, k)\}
$$
to be the iterations within each sequence.
%The set $\mathcal{R}(i, k)$ always contains the pair $(i, k)$ and contains only that pair if we use the Taylor step at iteration $(i, k)$. 
%If we enter a recursion step instead of a Taylor step, 
%then it additionally contains the pairs of level and iteration numbers of subsequent iterations and potential recursions until returning to iteration $(i, k)$. 
%As in \cite{gst}, $\mathcal{R}(i, k)$ does not contain the pairs of indices corresponding to the terminating iterates $(j, *)$ of its (internal) minimization sequences. 
Additionally, we have $j \leq i$ for every $j$ such that $(j, \ell) \in \mathcal{R}(i, k)$ for some nonnegative $k$ and $\ell$
and
\begin{equation}\label{Deltamono}
\Delta_{j, \ell} \leq \Delta_{i, k}, \text { whenever }(j, \ell) \in \mathcal{R}(i, k),    
\end{equation}
because of the choice of $\Delta_{j, 0}$ in Step 0 and \eqref{deres}. 
%The set $\mathcal{R}(i, k)$ contains at most one minimization sequence at level $i-1$ 
%but may contain more than one at level $i-2$, and likewise for additional levels.
\item [3.] %Per \cite{gst}, any iteration $(j, \ell) \in \mathcal{R}(i, k)$ will possess a unique path from $(j, \ell)$ to $(i, k)$ defined by taking the predecessor of $(j, \ell)$, say, $(j+1, \mu)=\pi(j, \ell)$, the predecessor of $(j+1, \mu)$,
%and so on until iteration $(i, k)$.
We define the index of the deepest level within recursion $(i,k)$ as
$$
\zeta(i, k)\coloneqq\min _{(j, \ell) \in \mathcal{R}(i, k)} j.
$$ 
The path from $(\zeta(i, k), \ell)$ to $(i, k)$ is the longest in $\mathcal{R}(i, k)$.
\item [4.]  We write
$$
\mathcal{T}(i, k)\coloneqq\{(j, \ell) \in \mathcal{R}(i, k) \mid \text { iteration }(j, \ell) \text { is a Taylor iteration}\}
$$
for the subset of iterations within $\mathcal{R}(i, k)$ at which Taylor's model $m_{j, \ell}\left(x_{j, \ell}+s_j\right)$ is chosen.
\end{itemize}
We also define the constants
\begin{equation}\label{kpr}
    \kappa_{PR}\coloneqq\max\{1,\max_{i=1,\,\dots,\,r}\|P_{i-1}^{i}\|\}= \max\{1,\max_{i=1,\,\dots,\,r}\|R_{i-1}^{i}\|\},
\end{equation}
where we use the assumption $\sigma_i=1$ and
\begin{equation}\label{ksigma}
    \kappa_{\sigma}\coloneqq\min\{1,\min_{i=0,\,\dots,\,r}\sigma_{\min}(M_i)\}>0,
\end{equation}
where $\sigma_{\min}(A)$ denotes the smallest singular value of the matrix $A$. We finally define 
\begin{equation}\label{46}
\Delta_{\min}^s\coloneqq\min_{i=0,\dots,r}\Delta_i^s,\quad\epsilon_{\min}^h\coloneqq\min_{i=0,\dots,r}\epsilon_i^h,\quad\text{and}\quad\epsilon_{\min}^{\Delta}\coloneqq\min_{i=0,\dots,r}\epsilon_i^{\Delta}.
\end{equation}

First, we obtain the same results as in \cite[Lemmas 4.1 and 4.2]{gst} by replacing $\|g_{j,\ell}\|$ and $\epsilon_j^g$ with $h_{j,\ell}$ and $\epsilon_j^h$ respectively; these results ensure that the steps remain within the trust region. We refer the reader to the statements and proofs of \cite[Lemmas 4.1 and 4.2]{gst}, as the presence of $\phi$ does not affect the argument.

%restated here for clarity.
%\begin{lemma}[\protect{\cite[Lemma 4.1]{gst}}]\label{L41}
%    For each iteration $(i,k)$, we have that
%    \begin{equation}\label{withintr}
%        \|s_{i,k}\|_i\leq\Delta_{i,k}.
%    \end{equation}
%    Moreover, if $\Delta_{j+1,q}$ is the trust-region radius of iteration $(j+1,q)=\pi(j,\ell)$, we have that, for each $(j,\ell)\in\mathcal{R}(i,k)$,
%    \begin{equation}\label{xdifference}
%        \|x_{j,\ell}-x_{j,0}\|_j\leq\Delta_{j+1,q}\quad\text{and}\quad\|x_{j,*}-x_{j,0}\|_j\leq\Delta_{j+1,q}.
%    \end{equation}
%\end{lemma}
%We direct the reader to the proof of \cite[Lemma 4.1]{gst}, as the regularizer does not
%play a part. 
%Additionally, \Cref{RMNTR} also satisfies the following two properties, both of which result from the termination criteria. 
%We also note this is a restatement of \cite[Lemma 4.2]{gst}. 
%\begin{lemma}[\protect{\cite[Lemma 4.2]{gst}}]\label{L42}
%    The mechanism of \Cref{RMNTR} guarantees that, for each iterate of index $(j,\ell)\neq(j,*)$ (i.e., for all iterates at level $j$ but not the last one),
%    \begin{equation}\label{hlarger}
%        h_{j,\ell}>\epsilon_j^h,
%    \end{equation}
%    and
%    \begin{equation}\label{415}
%        \|x_{j,\ell}-x_{j,0}\|_j\leq(1-\epsilon_i^{\Delta})\Delta_{j+1,q},
%    \end{equation}
%    where $\Delta_{j+1,q}$ is the trust-region radius of iteration $(j+1,q)=\pi(j,\ell)$. 
%\end{lemma}
%Similarly, the proof is identical to that of \cite[Lemma 4.2]{gst}. 
We now prove some useful bounds on the proximal gradient norms for all iterates that belong to a recursion process initiated within a sufficiently small trust region.
A similar result appears in \cite[Lemma 4.3]{gst}. 
\begin{lemma}
    For some iteration $(i,k)$ assume that
    \begin{equation}\label{deltasmall}
        \Delta_{i,k}\leq \frac{\sqrt{\kappa_{\sigma}}\kappa_{\rm stop}^r}{2r(\kappa_H+2/t)}\eqqcolon\kappa_1 h_{i,k},
    \end{equation}
    where $\kappa_1\in(0,1)$. Then one has for all $(j,\ell)\in\mathcal{R}(i,k)$ that
    \begin{equation}\label{equh}
        \frac{1}{2}\kappa_{\rm stop}^r h_{i,k}\leq h_{j,\ell}\leq\kappa_{PR}^r(1+\frac{1}{2}\kappa_{\rm stop}^r)h_{i,k}.
    \end{equation}
\end{lemma}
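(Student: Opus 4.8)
The plan is to adapt \cite[Lemma 4.3]{gst} to the present nonsmooth setting by controlling how the stationarity measure $h$ varies along the ancestor path that connects $(i,k)$ to a given $(j,\ell)\in\mathcal{R}(i,k)$. The argument rests on two elementary estimates. The first is a within-level Lipschitz bound: for each level $j$ the map $x\mapsto t^{-1}\|x-\Prox_{t\phi_j}(x-t\nabla\tilde{L}_j(x))\|_j$ is Lipschitz with modulus $\kappa_H+2/t$, because $\Prox_{t\phi_j}$ is nonexpansive and $\nabla\tilde{L}_j$ is Lipschitz with modulus at most $\kappa_H$ (since $\tilde{L}_j$ differs from $f_j$ only by a linear term and each $f_j$ satisfies Assumption \ref{passump}); here one also uses that $\|\cdot\|_j$ is a fixed scalar multiple of the Euclidean norm (Remark \ref{reinorm}), so the scaling cancels in the quotient defining $h$. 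Since by \eqref{lr} in its multilevel form (cf.~\cite[Lemma 4.1]{gst}) together with the monotonicity \eqref{Deltamono} every iterate occurring inside $\mathcal{R}(i,k)$ satisfies $\|x_{j,\ell}-x_{j,0}\|_j\le\Delta_{i,k}$, this yields $|h_{j,\ell}-h_{j,0}|\le(\kappa_H+2/t)\Delta_{i,k}=:\delta$ for all such $(j,\ell)$.

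The second estimate is an across-level bound that holds whenever iteration $(j+1,\cdot)$ is recursive, which is the case for every predecessor along the ancestor path of $(j,\ell)$. From the identity $h_{j,0}=t^{-1}\|R_{j+1}^{j}(x_{j+1,\cdot}-\Prox_{t\phi_{j+1}}(x_{j+1,\cdot}-t\nabla\tilde{L}_{j+1}(x_{j+1,\cdot})))\|_j$, Assumption \ref{roworthonormal}, the structure of $M_j$ in \eqref{Mi}, and the definition \eqref{kpr}, one obtains $h_{j,0}\le\kappa_{PR}\,h_{j+1,\cdot}$, while the model-selection test \eqref{fail} gives the reverse inequality $h_{j,0}\ge\kappa_{\rm stop}\,h_{j+1,\cdot}$.

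With these two estimates in hand I would chain them along the ancestor path $(i,k)=(i_0,k_0)\to(i_1,0)\to(i_1,k_1)\to\cdots\to(i_d,0)\to(j,\ell)$, where $i_p=i-p$ and $d=i-j\le r$ (using $0\le j\le i\le r$). Each of the $d$ level transitions rescales $h$ by a factor in $[\kappa_{\rm stop},\kappa_{PR}]$, and each of the $d$ within-level segments perturbs $h$ additively by at most $\delta$; a short telescoping and geometric-sum computation then gives
\[
\kappa_{\rm stop}^{d}h_{i,k}-\delta\sum_{p=0}^{d-1}\kappa_{\rm stop}^{p}\;\le\;h_{j,\ell}\;\le\;\kappa_{PR}^{d}h_{i,k}+\delta\sum_{p=0}^{d-1}\kappa_{PR}^{p}.
\]
Since $\kappa_{\rm stop}<1\le\kappa_{PR}$ and $d\le r$, the first sum is at most $r$ and the second is at most $r\kappa_{PR}^{r}$, while the smallness hypothesis \eqref{deltasmall} yields $\delta r\le\tfrac12\sqrt{\kappa_\sigma}\,\kappa_{\rm stop}^{r}h_{i,k}\le\tfrac12\kappa_{\rm stop}^{r}h_{i,k}$ (using $\kappa_\sigma\le1$ from \eqref{ksigma}). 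Substituting these together with $\kappa_{\rm stop}^{d}\ge\kappa_{\rm stop}^{r}$ and $\kappa_{PR}^{d}\le\kappa_{PR}^{r}$ collapses the two displayed inequalities to exactly \eqref{equh}.

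The main obstacle is expected to be the constant bookkeeping in the chaining step rather than any conceptual difficulty: one must verify that the recursion depth is indeed at most $r$, that each within-level displacement is measured against the appropriate calling-level radius so that \eqref{Deltamono} applies and it may be replaced by $\Delta_{i,k}$, and that passing between the level-dependent norms $\|\cdot\|_j$ and the Euclidean norm costs no more than the factors already absorbed into $\kappa_{PR}$ and $\sqrt{\kappa_\sigma}$. Once the within-level and across-level estimates are established, the remainder follows the proof of \cite[Lemma 4.3]{gst} with $\|g_{j,\ell}\|$ replaced by $h_{j,\ell}$.
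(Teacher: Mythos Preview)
Your proposal is correct and follows essentially the same approach as the paper: a within-level Lipschitz bound on $h$ (via nonexpansiveness of the prox and the Lipschitz gradient of $\tilde L_j$), the across-level inequalities $h_{j,0}\ge\kappa_{\rm stop}h_{j+1,\cdot}$ from \eqref{fail} and $h_{j,0}\le\kappa_{PR}h_{j+1,\cdot}$ from \eqref{kpr}, and then chaining along the ancestor path with the displacement bound $\|x_{j,\ell}-x_{j,0}\|_j\le\Delta_{i,k}$ supplied by \eqref{Deltamono}. The only cosmetic difference is that the paper carries the factor $1/\sqrt{\kappa_\sigma}$ through the within-level estimate (so its $\delta$ is your $\delta/\sqrt{\kappa_\sigma}$), whereas you absorb it at the end via $\sqrt{\kappa_\sigma}\le1$; under the standing assumption $\sigma_i=1$ and row-orthonormal $R_i^{i-1}$ one has $\kappa_\sigma=1$ and the two treatments coincide.
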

\begin{proof}
    The result is obvious for $(j,\ell)=(i,k)$.
    Consider now some iteration $(j,\ell)\in\mathcal{R}(i,k)$ with $j<i$. The definition of the proximal gradient yields
    for $(j,\ell)$ that
\begin{equation}
\begin{aligned}
    &h_{j,\ell}
    = \frac{1}{t}\left\|
         x_{j,\ell}
         - \Prox_{t\phi_j}\!\left(
             x_{j,\ell} - t\nabla \tilde{L}_j(x_{j,\ell})
           \right)
       \right\|_j \\[4pt]
    &\ge
      h_{j,0}
      %\\[-1pt]
    %&\quad\quad
    - \frac{1}{t}\Bigg\|
          x_{j,\ell}-x_{j,0}  
          - \Prox_{t\phi_j}\!\left(
                x_{j,\ell}-t\nabla \tilde{L}_j(x_{j,\ell})
            \right) 
          + \Prox_{t\phi_j}\!\left(
                x_{j,0}-t\nabla \tilde{L}_j(x_{j,0})
            \right)
        \Bigg\|_j \\[4pt]
    &\eqqcolon h_{j,0} - \vartheta_{j,\ell}.
\end{aligned}
\end{equation}

    By the non-expansiveness of 'Prox' \cite[Part 3 of Lemma 1]{bk} and \eqref{boundhess}, we have
    \begin{equation}\label{Pupper}
    \begin{aligned}
        \vartheta_{j,\ell}&\leq \frac{2}{t}\|x_{j,\ell}-x_{j,0}\|_j+\|\nabla \tilde{L}_j(x_{j,\ell})-\nabla \tilde{L}_j (x_{j,0})\|_j\\
        &\leq \frac{2}{t}\|x_{j,\ell}-x_{j,0}\|_j+\kappa_H\|x_{j,\ell}-x_{j,0}\|_j=(2t^{-1}+\kappa_H)\|x_{j,\ell}-x_{j,0}\|_j.
    \end{aligned}
    \end{equation}
    For the last inequality above we use that if we choose the low-level model on level $j+1$, then we have $\nabla \tilde{L}_j = R_{j+1}^{j}\nabla \tilde{L}_{j+1}= R_{j+1}^{j}\nabla f_{j+1}$, otherwise $\nabla \tilde{L}_j = \nabla f_{j}$. We get \eqref{Pupper} for both cases by \eqref{boundhess} and Assumption \ref{roworthonormal}. Hence, by Remark \ref{reinorm} it holds that
    \begin{equation}\label{hjl0}
        h_{j,\ell}\geq h_{j,0}-\left(\frac{2}{t\sqrt{\kappa_{\sigma}}}+\frac{\kappa_H}{\sqrt{\kappa_{\sigma}}}\right)\|x_{j,\ell}-x_{j,0}\|_j
    \end{equation}
    for all $(j,\ell)$. On the other hand, if $(j+1,q)\in\pi(j,\ell)$, we also have that, for all $(j,\ell)\in\mathcal{R}(i,k)$, $(j+1,q)\in\mathcal{R}(i,k)$ and 
    \begin{equation}\label{xdelta}
      \|x_{j,\ell}-x_{j,0}\|_j\leq\Delta_{j+1,q}\leq\Delta_{i,k}  
    \end{equation}
    because of \eqref{Deltamono} and Lemma \cite[Lemma 4.1]{gst}. Combining \eqref{hjl0} and \eqref{xdelta}, we get that, for all $(j,\ell)\in\mathcal{R}(i,k)$,
    \begin{equation}
        h_{j,\ell}\geq h_{j,0} - \left(\frac{2}{t\sqrt{\kappa_{\sigma}}}+\frac{\kappa_H}{\sqrt{\kappa_{\sigma}}}\right)\Delta_{i,k}.
    \end{equation}
    Consider now the path from $(j,\ell)$ to $(i,k)$ in $\mathcal{R}(i,k)$. Let this path consist of the iterations $(j,\ell),(j+u,\mu_{j+u})$ for $u=1,\,\dots,\,i-j-1$, and $(i,k)$. First we note that,
     Definition \ref{lowhigh}, Assumption \ref{roworthonormal}, and ~\cite[Theorem 6.15]{B} ensure that
\begin{equation*}
\begin{aligned}
\Prox_{t\phi_{j}}\!\Bigl(
    x_{j,0}
    - t R_{j+1}^{j}
      \nabla \tilde{L}_{j+1}(x_{j+1,\mu_{j+1}})&
\Bigr) \\=
R_{j+1}^{j}&\,
\Prox_{t\phi_{j+1}}\!\Bigl(
    x_{j+1,\mu_{j+1}}
    - t\nabla \tilde{L}_{j+1}(x_{j+1,\mu_{j+1}})
\Bigr).
\end{aligned}
\end{equation*}

     We then have that 
   
        \begin{align}
           h_{j,0}&=\frac{1}{t}\|x_{j,0}-\Prox_{t\phi_j}(x_{j,0}-t\nabla \tilde{L}_j(x_{j,0}))\|_j\notag\\
           &=\frac{1}{t}\|x_{j,0}-\Prox_{t\phi_j}(x_{j,0}-t R_{j+1}^{j}\nabla \tilde{L}_{j+1}(x_{j+1,\mu_{j+1}}))\|_j\notag\\
           &=\frac{1}{t}\|R_{j+1}^{j}(x_{j+1,\mu_{j+1}}-\Prox_{t\phi_{j+1}}(x_{j+1,\mu_{j+1}}-t\nabla \tilde{L}_{j+1}(x_{j+1,\mu_{j+1}})))\|_j\label{hj0r}\\
           &\geq \kappa_{\rm stop} h_{j+1,\mu_{j+1}}.\notag
        \end{align}
    
    Thus, combining all the inequalities above and \eqref{fail} with $\kappa_{\rm stop}\in(0,1)$, we obtain
    \begin{equation}\label{hj0u}
        \begin{aligned}
          h_{j,\ell}&\geq h_{j,0}- \left(\frac{2}{t\sqrt{\kappa_{\sigma}}}+\frac{\kappa_H}{\sqrt{\kappa_{\sigma}}}\right)\Delta_{i,k}\\
          &\geq \kappa_{\rm stop} h_{j+1,\mu_{j+1}}-\left(\frac{2}{t\sqrt{\kappa_{\sigma}}}+\frac{\kappa_H}{\sqrt{\kappa_{\sigma}}}\right)\Delta_{i,k}\\
          &\geq \kappa_{\rm stop} h_{j+1,0}-2\left(\frac{2}{t\sqrt{\kappa_{\sigma}}}+\frac{\kappa_H}{\sqrt{\kappa_{\sigma}}}\right)\Delta_{i,k}\\
          &\geq\kappa_{\rm stop}^2 h_{j+2,\mu_{j+2}}-2\left(\frac{2}{t\sqrt{\kappa_{\sigma}}}+\frac{\kappa_H}{\sqrt{\kappa_{\sigma}}}\right)\Delta_{i,k}\\
          &\geq\kappa_{\rm stop}^r h_{i,k}-r\left(\frac{2}{t\sqrt{\kappa_{\sigma}}}+\frac{\kappa_H}{\sqrt{\kappa_{\sigma}}}\right)\Delta_{i,k}.
        \end{aligned}
    \end{equation}
    We then deduce the first inequality of \eqref{equh} from \eqref{deltasmall}.

    To prove the second, we first note that, for any iteration $(j,\ell)$,
    \begin{equation}\label{hj0l}
        \begin{aligned}
            h_{j,\ell} &= \frac{1}{t}\Big\|x_{j,0}-\Prox_{t\phi_j}(x_{j,0}-t\nabla \tilde{L}_j(x_{j,0}))+(x_{j,\ell}-x_{j,0})\\
            &\quad-(\Prox_{t\phi_j}(x_{j,\ell}-t\nabla \tilde{L}_j(x_{j,\ell}))-\Prox_{t\phi_j}(x_{j,0}-t\nabla \tilde{L}_j(x_{j,0})))\Big\|_j\\
            &\leq h_{j,0} + \vartheta_{j,\ell}\\
        &\leq h_{j,0} +\left(\frac{2}{t\sqrt{\kappa_{\sigma}}}+\frac{\kappa_H}{\sqrt{\kappa_{\sigma}}}\right)\|x_{j,\ell}-x_{j,0}\|_j\\
        &\leq h_{j,0} +\left(\frac{2}{t\sqrt{\kappa_{\sigma}}}+\frac{\kappa_H}{\sqrt{\kappa_{\sigma}}}\right)\Delta_{i,k}.
        \end{aligned}
    \end{equation}
    We now retrack the iteration path from $(j,\ell)$ back to $(i,k)$ as above and successively deduce from \eqref{hj0l}, \eqref{hj0r}, \eqref{fail}, and \eqref{kpr} that
    \begin{equation*}
        \begin{aligned}
            h_{j,\ell}&\leq h_{j,0}+\left(\frac{2}{t\sqrt{\kappa_{\sigma}}}+\frac{\kappa_H}{\sqrt{\kappa_{\sigma}}}\right)\Delta_{i,k}\\
            &\leq \kappa_{PR}h_{j+1,\mu_{j+1}}+\left(\frac{2}{t\sqrt{\kappa_{\sigma}}}+\frac{\kappa_H}{\sqrt{\kappa_{\sigma}}}\right)\Delta_{i,k}\\
            &\leq\kappa_{PR} h_{j+1,0}+ \left((\kappa_{PR}+1)\left(\frac{2}{t\sqrt{\kappa_{\sigma}}}+\frac{\kappa_H}{\sqrt{\kappa_{\sigma}}}\right)\right)\Delta_{i,k}\\
            &\leq\kappa_{PR}^2 h_{j+2,\mu_{j+2}}+2\kappa_{PR}\left(\frac{2}{t\sqrt{\kappa_{\sigma}}}+\frac{\kappa_H}{\sqrt{\kappa_{\sigma}}}\right)\Delta_{i,k}\\
            &\leq\kappa_{PR}^rh_{i,k}+r\kappa_{PR}\left(\frac{2}{t\sqrt{\kappa_{\sigma}}}+\frac{\kappa_H}{\sqrt{\kappa_{\sigma}}}\right)\Delta_{i,k}\\
            &\leq\kappa_{PR}^r\left[h_{i,k}+r\left(\frac{2}{t\sqrt{\kappa_{\sigma}}}+\frac{\kappa_H}{\sqrt{\kappa_{\sigma}}}\right)\Delta_{i,k}\right]
        \end{aligned}
    \end{equation*}
    using $\kappa_{PR}\geq 1$. We use the bound \eqref{deltasmall} to conclude the second inequality of \eqref{equh}.
\end{proof}
Following \cite{gst}, we investigate noncritical points with small trust region radius $\Delta_{i,k}$. 
Let $\delta_{j,\ell}\coloneqq \pred_{j,\ell}$ and consider $\mathcal{V}(i,k)\subset\mathcal{R}(i,k)$ defined by
\begin{equation}\label{Vik}
    \mathcal{V}(i,k)\coloneqq\left\{(j,\ell)\in\mathcal{R}(i,k)\,|\,\delta_{j,\ell}\geq \frac{1}{2}\kappa_{\rm fcd }\kappa_{\rm stop}^r\kappa_{\epsilon}^{j-\zeta(i,k)}h_{i,k}\Delta_{j,\ell}\right\},
\end{equation}
where 
\begin{equation}\label{427}
    \kappa_{\epsilon}\coloneqq\eta_2\epsilon_{\min}^{\Delta}<1.
\end{equation}
The set $\mathcal{V}(i,k)$ is the subset of iterations within the recursion at iteration $(i,k)$ for which the model decrease is bounded below by a level-dependent factor times the product of the proximal gradient $h_{i,k}$ and the trust region radius $\Delta_{j,\ell}$. 
If iteration $(j,\ell)$ belongs to $\mathcal{V}(i,k)$, then $\delta_{j,\ell}$ can be computed in a finite number of iterations, implying $\mathcal{R}(j,\ell)$ is finite. 
We now proceed to show that $\mathcal{V}(i,k)$ and $\mathcal{R}(i,k)$ coincide for a sufficiently small radius $\Delta_{i,k}$, compare \cite[Theorem 4.4]{gst}. 
\begin{theorem}\label{t5}
Consider an iteration $(i,k)$ for which $h_{i,k}>0$ and
\begin{equation}\label{Deltaup}
    \Delta_{i,k}\leq\min[\Delta_{\min}^{s},\min\left\{\kappa_1,\frac{\kappa_{\rm fcd }\kappa_{\sigma}\kappa_{\rm stop}^r\kappa_{\epsilon}^r(1-\eta_2)}{2\kappa_H}\right\}h_{i,k}]\coloneqq\min[\Delta_{\min}^{s},\kappa_2 h_{i,k}],
\end{equation}
where $\kappa_2\in(0,1)$. Then the following conclusions hold:

\begin{itemize}
    \item [1.] every iteration using Taylor's model belongs to \eqref{Vik}, that is,
    \begin{equation}\label{TinV}
        \mathcal{T}(i,k)\subseteq\mathcal{V}(i,k), \quad\text{and}
    \end{equation}
    \item [2.] iteration $(j,\ell)$ is very successful for every $(j,\ell)\in\mathcal{V}(i,k).$
    Moreover, if all iterations $(j,\ell)$ of a minimization sequence at level $j<i$ belong to $\mathcal{V}(i,k)$ and if $\pi(j,\ell)=(j+1,q)$, then
    \item [3.] the decrease in the objective function at level $j$ satisfies, for each $\ell>0$,
    \begin{equation}\label{430}
        L_j(x_{j,0})-L_j(x_{j,\ell})\geq\frac{1}{2}\kappa_{\rm fcd }\kappa_{\rm stop}^r\kappa_{\epsilon}^{j-\zeta(i,k)+1}\ell h_{i,k}\Delta_{j+1,q},
    \end{equation}
    \item [4.] there are finite iterations
     in the minimization sequence at level $j$, and
    \item [5.] we have that
    \begin{equation}\label{jinV}
        (j+1,q)\in\mathcal{V}(i,k)
    \end{equation}
\end{itemize}    
\end{theorem}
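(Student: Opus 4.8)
The plan is to follow the proof of \cite[Theorem~4.4]{gst}, inserting the modifications needed for the nonsmooth part $\phi$ and the level-dependent norms $\|\cdot\|_j$. Since $\kappa_2\le\kappa_1$, the hypothesis of the preceding lemma holds, so throughout I would use \eqref{equh} --- in particular $h_{j,\ell}\ge\tfrac12\kappa_{\rm stop}^r h_{i,k}$ for all $(j,\ell)\in\mathcal R(i,k)$ --- together with $\Delta_{j,\ell}\le\Delta_{i,k}$ from \eqref{Deltamono} and the elementary observation that $\kappa_2\le\kappa_{\rm stop}^r/(2\kappa_H)$ (the remaining factors $\kappa_{\rm fcd},\kappa_\sigma,\kappa_\epsilon^r,1-\eta_2$ in \eqref{Deltaup} being each at most one). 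For part~1, I would take a Taylor iteration $(j,\ell)\in\mathcal R(i,k)$, start from the FCD bound \eqref{taylorfcd}, and combine $h_{j,\ell}\ge\tfrac12\kappa_{\rm stop}^r h_{i,k}$, the bound $1+\|B_{j,\ell}\|_j\le\kappa_H$ from \eqref{boundhess}, and $\Delta_{j,\ell}\le\Delta_{i,k}\le\kappa_2 h_{i,k}\le\tfrac{\kappa_{\rm stop}^r}{2\kappa_H}h_{i,k}$ to conclude that $h_{j,\ell}/(1+\|B_{j,\ell}\|_j)\ge\Delta_{j,\ell}$, so that the minimum in \eqref{taylorfcd} equals $\Delta_{j,\ell}$. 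Then $\delta_{j,\ell}=\pred_{j,\ell}\ge\kappa_{\rm fcd}h_{j,\ell}\Delta_{j,\ell}\ge\tfrac12\kappa_{\rm fcd}\kappa_{\rm stop}^r h_{i,k}\Delta_{j,\ell}\ge\tfrac12\kappa_{\rm fcd}\kappa_{\rm stop}^r\kappa_\epsilon^{\,j-\zeta(i,k)}h_{i,k}\Delta_{j,\ell}$ since $\kappa_\epsilon<1$ and $j\ge\zeta(i,k)$, which is exactly membership in $\mathcal V(i,k)$, giving \eqref{TinV}.

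For part~2, given $(j,\ell)\in\mathcal V(i,k)$ I would estimate $|1-\rho_{j,\ell}|=|\ared_{j,\ell}-\pred_{j,\ell}|/\pred_{j,\ell}$. The numerator is bounded by $\kappa_H\Delta_{j,\ell}^2/\kappa_\sigma$ by the arguments already carried out in the proof of \Cref{upbound} (the estimate \eqref{tde} if $(j,\ell)$ is a Taylor iteration and \eqref{lde} if it is recursive), combined with the norm equivalence $\|s\|^2\le\|s\|_j^2/\kappa_\sigma\le\Delta_{j,\ell}^2/\kappa_\sigma$; the denominator satisfies $\pred_{j,\ell}=\delta_{j,\ell}\ge\tfrac12\kappa_{\rm fcd}\kappa_{\rm stop}^r\kappa_\epsilon^r h_{i,k}\Delta_{j,\ell}$ because $(j,\ell)\in\mathcal V(i,k)$ and $j-\zeta(i,k)\le r$. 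Dividing, and then invoking $\Delta_{j,\ell}\le\Delta_{i,k}\le\kappa_2 h_{i,k}$ with the explicit value of $\kappa_2$ in \eqref{Deltaup}, yields $|1-\rho_{j,\ell}|\le1-\eta_2$; hence $(j,\ell)$ is very successful and, in particular, $\Delta_{j,\ell}^+\ge\Delta_{j,\ell}$.

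Parts 3--5 I would treat under the stated hypothesis. The key preliminary step is an induction on $\mu$ showing $\Delta_{j,\mu}\ge\epsilon_j^{\Delta}\Delta_{j+1,q}$ for every iteration of the level-$j$ minimization sequence: at $\mu=0$ one has $\Delta_{j,0}=\min\{\Delta_j^s,\Delta_{j+1,q}\}=\Delta_{j+1,q}$ because $\Delta_{j+1,q}\le\Delta_{i,k}\le\Delta_{\min}^s\le\Delta_j^s$, and the inductive step combines $\Delta_{j,\mu}^+\ge\Delta_{j,\mu}$ from part~2, the fact that the Step~5 test has not triggered (so $\|x_{j,\mu+1}-x_{j,0}\|_j\le(1-\epsilon_j^{\Delta})\Delta_{j+1,q}$), and the radius update \eqref{deres}. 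Since every $(j,\mu)$ is very successful and lies in $\mathcal V(i,k)$, $L_j(x_{j,\mu})-L_j(x_{j,\mu+1})=\rho_{j,\mu}\delta_{j,\mu}\ge\eta_2\delta_{j,\mu}\ge\tfrac12\kappa_{\rm fcd}\kappa_{\rm stop}^r\kappa_\epsilon^{\,j-\zeta(i,k)}h_{i,k}\,\eta_2\epsilon_j^{\Delta}\Delta_{j+1,q}$, and using $\eta_2\epsilon_j^{\Delta}\ge\eta_2\epsilon_{\min}^{\Delta}=\kappa_\epsilon$ and summing the telescoping decrease over $\mu=0,\dots,\ell-1$ produces \eqref{430}, which is part~3. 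Then \eqref{430}, together with the fact that $L_j$ is bounded below on the bounded region visited by the sequence --- its nonsmooth part by Assumption \ref{passump} and the linear modification in \eqref{mlow} because all iterates stay within $\Delta_{j+1,q}$ of $x_{j,0}$ --- forces $\ell$ to be finite, giving part~4. Finally, Step~2 of \Cref{RMNTR} identifies $\delta_{j+1,q}=\pred_{j+1,q}=L_j(x_{j,0})-L_j(x_{j,*})$, and applying \eqref{430} at the terminal index $\ell\ge1$ (the sequence contains a successful iteration by \Cref{eachonesuccess}) yields $\delta_{j+1,q}\ge\tfrac12\kappa_{\rm fcd}\kappa_{\rm stop}^r\kappa_\epsilon^{\,(j+1)-\zeta(i,k)}h_{i,k}\Delta_{j+1,q}$, i.e.\ $(j+1,q)\in\mathcal V(i,k)$, which is \eqref{jinV}.

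The only genuinely delicate point is the bookkeeping of constants and of the level-dependent norms: the single threshold $\kappa_2$ in \eqref{Deltaup} must be at once small enough to force the FCD minimum to equal $\Delta_{j,\ell}$ in part~1 and small enough to absorb, in part~2, both the factor $\kappa_\sigma^{-1}$ lost in passing between $\|\cdot\|$ and $\|\cdot\|_j$ and the worst-case factor $\kappa_\epsilon^{-r}$ coming from the level-dependent weight $\kappa_\epsilon^{\,j-\zeta(i,k)}$ in the definition \eqref{Vik} of $\mathcal V(i,k)$. Verifying that a run of very successful steps cannot drive the trust-region radius below $\epsilon_j^{\Delta}\Delta_{j+1,q}$ is the other place requiring care; beyond that, the argument parallels \cite[Theorem~4.4]{gst}.
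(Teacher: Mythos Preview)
Your proposal is correct and follows essentially the same route as the paper, which in turn mirrors \cite[Theorem~4.4]{gst} with the substitutions $\|g\|\to h$, $\kappa_{\rm red}\to\kappa_{\rm fcd}$, $\kappa_g\to\kappa_{\rm stop}$ and the insertion of $\kappa_\sigma$ from the level-dependent norms. The paper likewise refers items~1,~3,~5 back to \cite{gst} and writes out items~2 and~4 explicitly; your sketches for those items match the paper's computations.

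One small imprecision worth flagging: for part~2 in the recursive case you invoke \eqref{lde} directly, but that estimate is derived in the two-level section where the lower-level model is the \emph{quadratic} $f_{i-1,0}+\phi_{i-1}$ built from $B_{i-1,0}$. In the general multilevel setting $m_{j,\ell}=L_{j-1}$ is the full nonlinear function, so the paper redoes the computation: using Definition~\ref{lowhigh} and $s_{j,\ell}=P_{j-1}^j s_{j-1}$ one checks that the $\phi$-terms and the first-order terms in $\nabla\tilde L_j$ cancel exactly between $\ared_{j,\ell}$ and $\pred_{j,\ell}$, leaving only the two Hessian remainders $\tfrac12\langle\nabla^2 f_j(\xi)s_{j,\ell},s_{j,\ell}\rangle$ and $\tfrac12\langle\nabla^2 f_{j-1}(\xi')s_{j-1},s_{j-1}\rangle$, each bounded by $\tfrac12\kappa_H\|s_{j,\ell}\|^2$. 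The resulting bound $\kappa_H\Delta_{j,\ell}^2/\kappa_\sigma$ is exactly what you state, so your conclusion stands; you would just need to rewrite the one line rather than cite \eqref{lde} verbatim. For item~4 the paper derives an explicit upper bound on $\ell$ (useful later for complexity), whereas your ``$L_j$ bounded below on a bounded set'' argument gives finiteness only---either suffices for the statement as written.
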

\begin{proof}
We handle our proofs in the order presented, directing the reader to that of \cite[Theorem 4.4]{gst} 
where the logic is identical to ours. 
The proofs for items 1, 3, and 5 follow the same arguments as in \cite[Theorem 4.4]{gst}. By replacing $\|g_{i,k}\|$, $\kappa_{red}$, $\kappa_g$ with $h_{i,k}$,$\kappa_{\rm fcd }$, $\kappa_{\rm stop}$ respectively. The proofs for items 2 and 4 are slightly different, as our objective function $F$ need not be continuous, in general. But we could use Assumption \ref{passump} to overcome this difficulty. For the convenience of the reader, we provide the details for items 2 and 4 below.

%\begin{itemize}
  % \item[1.] We start by proving \eqref{TinV}. Note that, for $(j,\ell)\in\mathcal{R}(i,k)$, \eqref{Deltamono}, the fact that the positive constants $\kappa_{\rm fed }$, $\kappa_{\sigma}$, $\kappa_{\epsilon}$ and $\eta_2$ are all bounded above by one, \eqref{Deltaup}, the left inequality in \eqref{equh}, and \eqref{boundhess} allows us to conclude that
  % \begin{equation}\label{433}
  %     \Delta_{j,\ell}\leq\Delta_{i,k}\leq\frac{\kappa_{\rm stop}^r}{2\kappa_H}h_{i,k}\leq\frac{h_{j,\ell}}{1+\|B_{j,\ell}\|_j}.
  % \end{equation}
  %  If we now assume that $(j,\ell)\in\mathcal{T}(i,k)$, the decrease condition \eqref{taylorfcd} must hold at this iteration, which, together with the left part of \eqref{equh} and \eqref{433}, gives that
  %  \begin{equation}\label{434}
  %     \delta_{j,\ell}=m_{j,\ell}(x_{j,\ell})-m_{j,\ell}(x_{j,\ell}+s_{j,\ell})\geq\kappa_{\rm fcd }h_{j,\ell}\Delta_{j,\ell}\geq\frac{1}{2}\kappa_{\rm fcd }\kappa_{\rm stop}^rh_{i,k}\Delta_{j,\ell},
  %  \end{equation}
  % which then implies that \eqref{TinV} holds true since $\kappa_{\epsilon}<1$.
We prove item 2 separately for $(j,\ell)\in\mathcal{T}(i,k)$ and for $(j,\ell)\in\mathcal{V}(i,k)\setminus\mathcal{T}(i,k)$. Consider the case where $(j,\ell)\in\mathcal{T}(i,k)$ first. For $(j,\ell)\in\mathcal{T}(i,k)$, by Taylor's theorem, condition \eqref{boundhess}, and $\|s_{j,\ell}\|_j\leq\Delta_{j,\ell}$, we have that
\begin{equation}\label{435}
    \left|L_j(x_{j,\ell}+s_{j,\ell})-m_{j,\ell}(x_{j,\ell}+s_{j,\ell})\right|\leq \kappa_H\left(\frac{\|s_{j,\ell}\|}{\|s_{j,\ell}\|_j}\right)^2\Delta_{j,\ell}^2.
\end{equation}
The definition of $\|\cdot\|_j$ and $\kappa_{\sigma}$ yield $\|s_{j,\ell}\|_{j}\geq\sqrt{\kappa_{\sigma}}\|s_{j,\ell}\|$. Therefore, \eqref{435} becomes
\begin{equation*}
    \left|L_j(x_{j,\ell}+s_{j,\ell})-m_{j,\ell}(x_{j,\ell}+s_{j,\ell})\right|\leq\frac{\kappa_H}{\kappa_{\sigma}}\Delta_{j,\ell}^2.
\end{equation*}

Combining this last bound with \cite[(4.34)]{gst}, we obtain that
\begin{equation*}
    |\rho_{j,\ell}-1|\leq\left|\frac{L_j(x_{j,\ell}+s_{j,\ell})-m_{j,\ell}(x_{j,\ell}+s_{j,\ell})}{m_{j,\ell}(x_{j,\ell})-m_{j,\ell}(x_{j,\ell}+s_{j,\ell})}\right|\leq\frac{2\kappa_H}{\kappa_{\rm fcd }\kappa_{\sigma}\kappa_{\rm stop}^rh_{i,k}}\Delta_{j,\ell}\leq 1-\eta_2,
\end{equation*}
where the last inequality is deduced from \eqref{Deltamono} and the fact that \eqref{Deltaup} implies
$
    \Delta_{i,k}\leq\kappa_{\rm fcd }\kappa_{\sigma}\kappa_{g}^rh_{i,k}(1-\eta_2)/2\kappa_{H},
$
since $\kappa_{\epsilon}<1$. Hence, $\rho_{j,\ell}\geq\eta_2$ and the iteration $(j,\ell)\in\mathcal{T}(i,k)$ is very successful, as requested in item 2. 

Next we prove item 2 for $(j,\ell)\in\mathcal{V}(i,k)\setminus\mathcal{T}(i,k)$, which implies, in particular, that $\mathcal{R}(j,\ell)$ is finite and $x_{j-1,*}$ is well defined. If we consider the iteration $(j,\ell)$, 
\begin{equation*}
\begin{aligned}
    &L_j(x_{j,\ell})-L_j(x_{j,\ell}+s_{j,\ell})\\
    &\quad=-\langle \nabla f_{j,\ell}(x_{j,\ell}),s_{j,\ell}\rangle-\frac{1}{2}\langle\nabla^2f_j(\xi_j)s_{j,\ell},s_{j,\ell}\rangle+\phi_j(x_{j,\ell})-\phi_j(x_{j,\ell}+s_{j,\ell})
\end{aligned}
\end{equation*}
for some $\xi_j\in[x_{j,\ell},x_{j,\ell}+s_{j,\ell}]$ and also that
\begin{equation*}
\begin{aligned}
  &m_{j,\ell}(x_{j,\ell})-m_{j,\ell}(x_{j,\ell}+s_{j,\ell})\\ 
  &\quad=f_{j-1,0}(x_{j-1,0})-f_{j-1,0}(x_{j-1,0}+s_{j-1})+\phi_j(x_{j,\ell})-\phi_j(x_{j,\ell}+P_{j-1}^js_{j-1})\\
  &\qquad-\langle R_{j}^{j-1}\nabla f_j(x_{j,\ell})-\nabla f_{j-1}(x_{j-1,0}),s_{j-1}\rangle,
\end{aligned}
\end{equation*}
which gives us
\begin{equation*}
\begin{aligned}
   |\ared_{j,\ell}&-\pred_{j,\ell}|
   = \Bigl|
        L_j(x_{j,\ell}) - L_j(x_{j,\ell}+s_{j,\ell})
        - \bigl(m_{j,\ell}(x_{j,\ell}) - m_{j,\ell}(x_{j,\ell}+s_{j,\ell})\bigr)
      \Bigr| \\[4pt]
   &= \Bigl|
        -\tfrac{1}{2}\langle \nabla^{2} f_j(\xi_j)s_{j,\ell},\, s_{j,\ell}\rangle 
        %\\[-2pt]
   %&\qquad\;
        -\, f_{j-1,0}(x_{j-1,0})
        \;+\; f_{j-1,0}(x_{j-1,0}+s_{j-1}) \\[-2pt]
   &\qquad\;
        -\, \langle \nabla f_{j-1}(x_{j-1,0}),\, s_{j-1}\rangle
      \Bigr| \\[4pt]
   &\le \kappa_H\|s_{j,\ell}\|^{2}
     \;\le\; \frac{\kappa_H}{\kappa_{\sigma}}\,\Delta_{j,\ell}^{2}.
\end{aligned}
\end{equation*}
but since $(j,\ell)\in\mathcal{V}(i,k)$,  $\kappa_{\epsilon}<1$, and $j-\zeta(i,k)\leq r$, we have that 
\begin{equation*}
    \delta_{j,\ell}\geq \frac{1}{2}\kappa_{\rm fcd }\kappa_{\rm stop}^r\kappa_{\epsilon}^{j-\zeta(i,k)}h_{i,k}\Delta_{j,\ell}\geq \frac{1}{2}\kappa_{\rm fcd }\kappa_{\rm stop}^r\kappa_{\epsilon}^rh_{i,k}\Delta_{j,\ell}>0.
\end{equation*}
Combining the above inequalities and \eqref{Deltaup} yields
%\begin{equation*}
$
    |\rho_{j,\ell}-1|\leq1-\eta_2,
$
%\end{equation*}
which implies that $\rho_{j,\ell}\geq\eta_2$. Iteration $(j,\ell)$ is thus very successful, which completes the proof of item 2.,
where, in the last step, we used \eqref{46} and \eqref{427}. 
%Summing now over iterations $p=0,\,\dots,\,\ell-1$ at level $j$, we obtain that
%\begin{equation*}
%\begin{aligned}
%    L_j(x_{j,0})-L_j(x_{j,\ell})&=\sum_{p=0}^{\ell-1}\left[L_j(x_{j,p})-L_j(x_{j,p}+s_{j,p})\right]\\
%    &\geq\frac{1}{2}\kappa_{\rm fcd }\kappa_{\rm stop}^r\kappa_{\epsilon}^{j-\zeta(i,k)+1}\ell h_{i,k}\Delta_{j+1,q},
%\end{aligned}
%\end{equation*}
%yielding \eqref{430}.

For item 4, we start by showing that the total decrease in $L_j$ is bounded above by some multiple of $h_{i,k}$ and $\Delta_{j+1,q}$. Note that 
\begin{equation*}
\begin{aligned}
    L_j(x_{j,0}+s_{j,\min})&=f_{j,0}(x_{j,0})+\langle\nabla f_{j,0}(x_{j,0}), s_{j,\min}\rangle\\
    &\qquad+\frac{1}{2}\langle \nabla^2 f_{j,0}(\xi_j)s_{j,\min},s_{j,\min}\rangle+\phi_j(x_{j,0}+s_{j,\min}),
\end{aligned}
\end{equation*}
for some $\xi_j\in[x_{j,0},x_{j,0}+s_{j,\min}]$, where we have defined 
$s_{j,\min}\coloneqq
%\argmin_{\|s_j\|_j\leq\Delta_{j+1,q}} 
\argmin\{L_j(x_{j,0}+s_j):\|s_j\|_j\leq\Delta_{j+1,q}\}$.
Note that $F_i$ satisfies Assumption \ref{passump}, i.e. $F_i(x)$ is bounded below for any $x\in\rr^{n_i}$, let $F_i^{\rm low}$ be the lower bound.
Hence, we get that, for all $s_j$ such that $\|s_j\|_j\leq\Delta_{j+1,q}$,
\begin{equation*}
\begin{aligned}
    &L_j(x_{j,0}) - L_j(x_{j,0}+s_j)\le L_j(x_{j,0}) - L_j(x_{j,0}+s_{j,\min})\\[2pt]
    &\quad\le F_j(x_{j,0}) - F_j^{\mathrm{\rm low}}
      + f_{j,0}(x_{j,0}+s_{j,\min}) - f_{j,0}(x_{j,0}) \\[-2pt]
    &\qquad\qquad
      - \langle \nabla f_{j,0}(x_{j,0}),\, s_{j,\min} \rangle
      - \tfrac{1}{2}\langle \nabla^{2}f_{j,0}(\xi_j)s_{j,\min},\, s_{j,\min}\rangle\\[2pt]
    &\quad\le F_j(x_{j,0}) - F_j^{\mathrm{low}}
      + \tfrac{\kappa_H}{2}\|s_{j,\min}\|^{2}
    \le F_j(x_{j,0}) - F_j^{\mathrm{low}}
      + \tfrac{\kappa_H}{2\kappa_{\sigma}}\Delta_{j+1,q}^{2}.
\end{aligned}
\end{equation*}
This, \eqref{430}, $h_{i,k}\geq\epsilon_{i}^h$, $\Delta_{j+1,q}\geq\Delta_{\min}^s$, and $\kappa_{\epsilon}<1$, yield
$$\ell\leq\frac{F_j(x_{j,0})-F_j^{\rm low}}{\frac{1}{2}\kappa_{\rm fcd }\kappa_{\rm stop}\kappa_{\epsilon}^{r}\epsilon_i^h\Delta_{\min}^s}+\frac{\kappa_H}{2\kappa_{\sigma}\epsilon_i^h},$$
which completes the proof of item 4.
%\item [5.] Let $p_{*}$ be the iterations number in the minimization sequence at level $j$. Then from \eqref{430}, and the definition of $\delta_{j+1,q}$ that
%then we get \eqref{jinV}. 
%\end{itemize}
\end{proof}

This theorem yields results analogous to \cite[Corollary 4.5, Lemma 4.6, Lemma 4.7, and Theorem 4.8]{gst}. As the proofs follow from \cite{gst}, we omit the details here. These results guarantee the finiteness of the recursion at iteration $(i,k)$ when $\Delta_{i,k}$ is sufficiently small, and they also ensure that each minimization sequence contains at least one successful iteration (as in Corollary~\ref{eachonesuccess}). Further, the trust-region radii are bounded away from zero by a level-independent factor.

Next it is important to show that the algorithm is well defined in that all recursions are finite. The proof is analogous to ~\cite[Theorem 4.9]{gst}. For the convenience for the reader, we present some proof details .
\begin{theorem}\label{T49}
    The number of iterations at each level is finite. Moreover, there exists $\kappa_L\in(0,1)$ such that, for every minimization sequence at level $i=0,\,\dots,\,r$,
    \begin{equation*}
        L_i(x_{i,0})-L_i(x_{i,p+1})\geq\tau_{i,p}\eta_{1}^{i+1}\kappa_L,
    \end{equation*}
    where $\tau_{i,p}$ is the total number of successful iterations in $\bigcup_{\ell=0}^{p}\mathcal{T}(i,\ell)$.
\end{theorem}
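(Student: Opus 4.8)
The plan is to prove both assertions simultaneously by induction on the level index $i$, following the scheme of \cite[Theorem~4.9]{gst} with the gradient norm $\|g_{j,\ell}\|$ replaced by the proximal stationarity measure $h_{j,\ell}$, and with Assumption~\ref{passump} taking over wherever the smooth theory would invoke continuity or coercivity. First I would fix a level-independent constant $\kappa_L\in(0,1)$ — for instance $\kappa_L\coloneqq\frac{1}{2}\kappa_{\rm fcd }\epsilon_{\min}^h\min\{\epsilon_{\min}^h/\kappa_H,\Delta_{\min}\}$, where $\Delta_{\min}>0$ is the level-independent lower bound on the trust-region radii supplied by the analogues of \cite[Lemmas~4.6 and 4.7]{gst} (already available here through Lemma~\ref{upbound} and Theorem~\ref{t5}). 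The base case is level $i=0$, where every iteration is a Taylor iteration, so $\mathcal{T}(0,\ell)=\{(0,\ell)\}$: for each successful iteration $(0,k)$, the acceptance rule, the FCD \eqref{taylorfcd} (Proposition~\ref{subsolveralpha}), the bound \eqref{boundhess}, the fact that an active sequence keeps $h_{0,k}>\epsilon_0^h\ge\epsilon_{\min}^h$, and $\Delta_{0,k}\ge\Delta_{\min}$ give $L_0(x_{0,k})-L_0(x_{0,k+1})\ge\eta_1\kappa_L$, which is the $i=0$ instance of $\eta_1^{i+1}\kappa_L$; summing over successful iterations yields the claimed estimate. Because $F_0$ is bounded below on $\dom\phi_0$ and, by Step~5 and \eqref{deres} of Algorithm~\ref{RMNTR}, the sequence stays in $\{x:\|x-x_{0,0}\|_0\le\Delta_1\}$, the value of $L_0$ is bounded below along the sequence, so only finitely many iterations can be successful; Lemma~\ref{upbound} then bounds the run of unsuccessful iterations between consecutive successful ones, and the level-$0$ sequence is finite.

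For the inductive step, I would assume the two assertions at level $i-1$; in particular every minimization sequence there is finite, obeys the decrease estimate, and — following the recursion down to the deepest level, where all iterations are Taylor — contains at least one successful Taylor iteration (Corollary~\ref{eachonesuccess}). Given a minimization sequence at level $i$, split its successful iterations into Taylor and recursive ones. A successful Taylor iteration $(i,k)$ contributes, exactly as in the base case and using $\eta_1<1$, $L_i(x_{i,k})-L_i(x_{i,k+1})\ge\eta_1^{i+1}\kappa_L$, matching the single Taylor iteration in $\mathcal{T}(i,k)$. For a successful recursive iteration $(i,k)$, Step~2 of Algorithm~\ref{RMNTR} gives $m_{i,k}(x_{i,k})-m_{i,k}(x_{i,k}+s_{i,k})=L_{i-1}(x_{i-1,0})-L_{i-1}(x_{i-1,*})$ with $x_{i-1,0}=R_i^{i-1}x_{i,k}$, and applying the induction hypothesis to the finite level-$(i-1)$ sequence that produced $x_{i-1,*}$ bounds this model decrease below by $\tau_{i-1}\eta_1^{i}\kappa_L$, where $\tau_{i-1}\ge1$ counts the successful Taylor iterations created inside that recursion (equivalently, those in $\mathcal{T}(i,k)$). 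Since the trial step is accepted,
\[
L_i(x_{i,k})-L_i(x_{i,k+1})\ \ge\ \eta_1\bigl(L_{i-1}(x_{i-1,0})-L_{i-1}(x_{i-1,*})\bigr)\ \ge\ \tau_{i-1}\,\eta_1^{i+1}\kappa_L .
\]
Telescoping $L_i(x_{i,0})-L_i(x_{i,p+1})$ over the successful iterations $k\le p$ and summing these per-iteration bounds gives $L_i(x_{i,0})-L_i(x_{i,p+1})\ge\tau_{i,p}\eta_1^{i+1}\kappa_L$. Finiteness at level $i$ then follows as before: $L_i$ is bounded below along the sequence (by Assumption~\ref{passump} together with the confinement $\|x_{i,k}-x_{i,0}\|_i\le\Delta_{i+1}$), which bounds $\tau_{i,p}$ and hence the number of successful iterations; Lemma~\ref{upbound} bounds the unsuccessful iterations between successive successful ones; and each recursive iteration spawns a finite level-$(i-1)$ sequence by the induction hypothesis.

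The step I expect to be the main obstacle is the bookkeeping that makes the count $\tau_{i,p}$ on the right line up \emph{exactly} with the telescoped decrease on the left — i.e.\ verifying that each successful Taylor iteration accumulated in $\bigcup_{\ell\le p}\mathcal{T}(i,\ell)$ is charged once, and only once, against a genuine decrease of $L_i$, which amounts to tracking the chain of (accepted) recursive iterations above every deep iteration through the recursion tree. This is precisely the accounting carried out in \cite[Theorem~4.9]{gst}, which I would follow line by line after the substitution of $h_{j,\ell}$ for $\|g_{j,\ell}\|$. The one genuinely new difficulty, absent from the smooth setting, is that $F_i$ and $L_i$ need not be continuous nor bounded below on all of $\rr^{n_i}$; this is why the boundedness-from-below clause of Assumption~\ref{passump} and the trust-region confinement built into \eqref{deres} and Step~5 of Algorithm~\ref{RMNTR} must take over the role played by coercivity in \cite{gst}, exactly as already occurred in items~2 and~4 of Theorem~\ref{t5}.
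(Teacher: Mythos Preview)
Your proposal is correct and follows essentially the same route as the paper: both argue by induction on the level index exactly as in \cite[Theorem~4.9]{gst}, replacing the smooth gradient norm by the proximal stationarity measure $h_{j,\ell}$, and both handle lower boundedness of $L_i$ for $i<r$ via Assumption~\ref{passump} combined with the trust-region confinement $\|x_{i,k}-x_{i,0}\|_i\le\Delta_{i+1}$. The paper's own constant is $\kappa_L=\kappa_{\rm fcd}\epsilon_{\min}^h\min\{\epsilon_{\min}^h/\kappa_H,\Delta_{\min}\}$ (without your extra factor $\tfrac12$), but this is immaterial.
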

\begin{proof}
The proof is analogous to that of~\cite[Theorem~4.9]{gst}, replacing $h_i$, $\kappa_{\mathrm{red}}$, and $\epsilon_{\min}^g$ with $L_i$, $\kappa_{\mathrm{fcd}}$, and $\epsilon_{\min}^h$ respectively. In this way, we obtain an inequality corresponding to \cite[(4.55)]{gst}, which we restate below in our notation.
\begin{equation}\label{455}
    L_i(x_{i,0})-L_i(x_{i,p+1})=\sum_{\ell=0}^{p}{}^{(S)}[L_i(x_{i,\ell})-L_i(x_{i,\ell+1})]\geq\tau_{i,p}\eta_{1}^{i+1}\kappa_L,
\end{equation}
with 
$\kappa_L\coloneqq\kappa_{\mathrm{fcd}}\epsilon_{\min}^h\min\left\{{\epsilon_{\min}^h}{\kappa_H^{-1}},\Delta_{\min}\right\}\in(0,1)$,
for the minimization sequence including iteration $(i,\ell)$. If $i=r$, $L_i=F$ is bounded below by Assumption \ref{passump}, and \eqref{455} imposes that the number of successful iterations in this sequence must again be finite. The same conclusion holds if $i<r$, since $L_i$ is bounded below on the set $\{x\in\rr^{n_i}\,|\,\|x-x_{i,0}\|_i\leq\Delta_{r,k}\}$, which contains $x_{i,p+1}$ because of \cite[Lemma 4.1]{gst} and \eqref{Deltamono}. As for level 0, we conclude that the sequence is finite. Moreover, the same holds for every minimization sequence at level $i$, and the induction is completed.
\end{proof}
This theorem yields the desired worst-case complexity bound, which we state next.
\begin{theorem}
    Assume that there is $F_{\rm low}\in\mathbb{R}$ such that $L_r(x)=F_r(x)\geq F_{\rm low}$ for every $x\in\rr^n$. Then Algorithm 2 needs at most $\mathcal{O}((\epsilon_{\min}^h)^{-2})$ successful Taylor iterations at any level to obtain an iterate $x_{r,k}$ such that $h_{r,k}\leq\epsilon_{r}^h$.
\end{theorem}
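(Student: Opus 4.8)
The plan is to read the bound off almost directly from Theorem~\ref{T49}, so that the only genuine work is to track the dependence of the constant $\kappa_L$ on $\epsilon_{\min}^h$. First I would observe that, by Theorem~\ref{T49}, every minimization sequence is finite; in particular the top-level sequence terminates at some iteration $(r,p^\ast)$ with $h_{r,p^\ast}\le\epsilon_r^h$. Since the whole run is contained in $\mathcal{R}(r,\cdot)$, the counter $\tau_{r,p^\ast}$ from Theorem~\ref{T49} equals the \emph{total} number of successful Taylor iterations performed at all levels. Applying that theorem at level $i=r$, together with the hypothesis $L_r=F_r\ge F_{\rm low}$, gives
\[
\tau_{r,p^\ast}\,\eta_1^{\,r+1}\kappa_L\;\le\;L_r(x_{r,0})-L_r(x_{r,p^\ast+1})\;\le\;F_r(x_{r,0})-F_{\rm low},
\]
hence $\tau_{r,p^\ast}\le (F_r(x_{r,0})-F_{\rm low})\,\eta_1^{-(r+1)}\kappa_L^{-1}$.

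The second step is to lower-bound $\kappa_L=\kappa_{\rm fcd}\,\epsilon_{\min}^h\min\{\epsilon_{\min}^h\kappa_H^{-1},\Delta_{\min}\}$ by $c\,(\epsilon_{\min}^h)^2$ for a constant $c>0$ independent of $\epsilon_{\min}^h$. This rests on the level-independent lower bound for the trust-region radii — the multilevel analogue of Lemma~\ref{lowerbound}, i.e.\ \cite[Theorem~4.8]{gst}, which itself follows from Lemma~\ref{upbound} and Theorem~\ref{t5}: as long as the run has not terminated one has $h_{i,k}>\epsilon_{\min}^h$ at every active iteration, so $\Delta_{i,k}\ge\Delta_{\min}$, where $\Delta_{\min}$ is the minimum of a few fixed positive quantities and a term of the form $\gamma_1\kappa_s\,\epsilon_{\min}^h$. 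In the relevant regime $\epsilon_{\min}^h\downarrow 0$ the latter term is the active one, so $\Delta_{\min}\ge c'\epsilon_{\min}^h$ and therefore $\kappa_L\ge\kappa_{\rm fcd}\min\{\kappa_H^{-1},c'\}\,(\epsilon_{\min}^h)^2$.

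Combining the two estimates yields $\tau_{r,p^\ast}\le C\,(\epsilon_{\min}^h)^{-2}$ with $C=(F_r(x_{r,0})-F_{\rm low})\,\eta_1^{-(r+1)}\bigl(\kappa_{\rm fcd}\min\{\kappa_H^{-1},c'\}\bigr)^{-1}$, a constant that does not involve $\epsilon_{\min}^h$ (the number of levels $r$ being fixed); this is exactly the claimed $\mathcal{O}((\epsilon_{\min}^h)^{-2})$ bound. I expect the delicate point to be the second step: one has to verify that none of the quantities hidden inside $\Delta_{\min}$ — the index $k_1$ of the first successful iteration on each level, the initial radii $\Delta_i^s$, and the aggregate $\Delta_{\min}^s$ appearing in \eqref{Deltaup} — secretly shrinks with $\epsilon_{\min}^h$, so that the clean $(\epsilon_{\min}^h)^2$ scaling of $\kappa_L$ holds uniformly over all recursion levels. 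The transfer of objective decrease from the deepest levels up to level $r$ needs no separate argument, since it is already absorbed into the $\eta_1^{\,r+1}$ factor of Theorem~\ref{T49}.
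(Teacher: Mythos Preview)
Your proposal is correct and follows the same route the paper intends: the paper presents this theorem immediately after Theorem~\ref{T49} with the remark ``This theorem yields the desired worst-case complexity bound,'' and gives no further proof, so your argument --- read off the inequality $\tau_{r,p^\ast}\eta_1^{r+1}\kappa_L\le F_r(x_{r,0})-F_{\rm low}$ from Theorem~\ref{T49} and then observe that $\kappa_L$ scales like $(\epsilon_{\min}^h)^2$ --- is exactly the intended one, only spelled out in more detail than the paper provides.
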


\begin{corollary}
    Assume that Algorithm 2 is called at the uppermost level with $\epsilon_{r}^h=0$. Then 
    $\liminf_{k\to\infty} h_{r,k}=0$.
\end{corollary}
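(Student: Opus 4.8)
The plan is to argue by contradiction and reduce everything to the finiteness machinery already assembled; the argument is the exact multilevel counterpart of \cite[Corollary 4.11]{gst}, with $\|g_{i,k}\|$ replaced by $h_{i,k}$ and the role of \cite[Theorem 4.9]{gst} played by \Cref{T49}. Suppose, contrary to the claim, that $\liminf_{k\to\infty} h_{r,k} = \epsilon > 0$. Then there is an index $k_0$ with $h_{r,k} \geq \epsilon$ for all $k \geq k_0$. Because the call at the uppermost level is made with $\epsilon_r^h = 0$ and with $\Delta_{r+1} = \infty$, neither stopping test in Step 5 of \Cref{RMNTR} can ever fire at level $r$, so the top-level minimization sequence does not terminate.

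First I would record that this infinite top-level sequence really consists of infinitely many level-$r$ iterations, i.e. no recursive call traps the algorithm forever at a coarser level. Indeed, every recursion issued from a level-$r$ iteration is entered with the original, positive tolerances $\epsilon_j^h$ for $j \le r-1$ and with level objectives $L_j$ that are bounded below --- this boundedness follows from the construction of the low-level models in \Cref{sec:rmta} together with \Cref{passump} --- so \Cref{T49} applies verbatim to each such recursion and guarantees it returns after finitely many inner iterations.

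Next I would transport \Cref{upbound}, \Cref{lowerbound}, and \Cref{finiteite} to the present setting, now using the surrogate bound $h_{r,k} \geq \epsilon$ (valid for $k \ge k_0$) in place of the positive termination tolerance that those statements implicitly rely on. \Cref{upbound} gives that any level-$r$ iteration with $\Delta_{r,k} \le \kappa_s \epsilon \le \kappa_s h_{r,k}$ is very successful with $\Delta_{r,k+1} \ge \Delta_{r,k}$; combined with the radius update in Step 6 --- where, since $\Delta_{r+1}=\infty$, the extra cap in \eqref{deres} is inactive --- this yields a uniform lower bound $\Delta_{r,k} \ge \Delta_{\min} > 0$ for all $k \ge k_0$, exactly as in \Cref{lowerbound}. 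In particular only finitely many consecutive unsuccessful level-$r$ iterations can occur, so infinitely many successful ones must occur, and each of them, by the FCD estimate \eqref{taylorfcd} and $\rho_{r,k} \ge \eta_1$, decreases $F = L_r$ by at least $\eta_1 \kappa_{\rm fcd}\,\epsilon\min\{\epsilon/\kappa_H,\Delta_{\min}\} > 0$, a fixed positive amount. Summing over all successful iterations forces $F(x_{r,k}) \to -\infty$, contradicting the lower bound $F \ge \kappa_{lb}$ from \Cref{passump}. Hence $\liminf_{k\to\infty} h_{r,k} = 0$.

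The main obstacle --- really the only subtlety --- is the legitimacy of this reduction: one must check that $\epsilon_r^h$ enters the proofs of \Cref{upbound}, \Cref{lowerbound}, and \Cref{finiteite} only through inequalities of the type $h_{r,k} > \epsilon_r^h$, so that supplying the contradiction hypothesis $h_{r,k} \ge \epsilon$ lets those proofs run unchanged at level $r$ with $\epsilon$ in the role of $\epsilon_r^h$, while nothing below level $r$ is affected because the coarse tolerances remain strictly positive. Once this bookkeeping is in place, the contradiction closes.
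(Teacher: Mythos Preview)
Your proposal is correct and follows exactly the route the paper takes: both reduce the claim to the contradiction argument of \cite[Corollary~4.11]{gst}, with $h_{i,k}$ replacing $\|g_{i,k}\|$ and \Cref{T49} (together with the transported versions of \Cref{upbound}--\Cref{finiteite}) supplying the finiteness and uniform-decrease estimates. The only cosmetic slip is that $\liminf_{k}h_{r,k}=\epsilon$ does not literally give $h_{r,k}\ge\epsilon$ for large $k$; one should take any $\epsilon'\in(0,\epsilon)$ instead, which changes nothing in the argument.
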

\begin{proof}
This proof is analogous to ~\cite[Corollary 4.11]{gst} and follows from Theorem \ref{T49}.
\end{proof}

\end{document}